\documentclass[a4paper,10pt]{amsart}
\usepackage[utf8]{inputenc}
\usepackage{amsthm}
\usepackage{amsmath}
\usepackage{bm}
\usepackage{amsfonts}
\usepackage{amssymb}
\usepackage{mathtools}
\usepackage{appendix}
\usepackage{mathrsfs}
\usepackage{setspace}
\usepackage{textgreek}
\usepackage{todonotes}
\usepackage{thmtools} 

\usepackage[pdfdisplaydoctitle,colorlinks,urlcolor=blue,linkcolor=blue,citecolor=blue]{hyperref} 


\newcommand{\C}{\mathbb{C}}

\newcommand{\F}{\mathcal{F}}

\newcommand{\M}{\mathcal{M}}
\newcommand{\N}{\mathbb{N}}

\newcommand{\PP}{\mathbb{P}}

\newcommand{\R}{\mathbb{R}}

\newcommand{\T}{\mathbb{T}}
\newcommand{\U}{\mathcal{U}}

\newcommand{\XX}{\mathcal{X}}
\newcommand{\x}{\mathbf{x}}


\DeclareMathOperator{\re}{Re}
\DeclareMathOperator{\im}{Im}
\let\div\relax
\DeclareMathOperator{\div}{div}
\DeclareMathOperator{\lip}{Lip}

\renewcommand{\epsilon}{\varepsilon}
\renewcommand{\setminus}{\smallsetminus}

\DeclareMathOperator{\imm}{i}

\let\div\relax
\DeclareMathOperator{\div}{div}


\newcommand{\set}[1]{\left\{#1\right\}}
\newcommand{\pa}[1]{\left(#1\right)}
\newcommand{\bra}[1]{\left[#1\right]}
\newcommand{\abs}[1]{\left|#1\right|}
\newcommand{\norm}[1]{\left\|#1\right\|}
\newcommand{\brak}[1]{\left\langle#1\right\rangle}

\newtheorem{thm}{Theorem}[section]

\newtheorem{definition}[thm]{Definition}
\newtheorem{cor}[thm]{Corollary}

\newtheorem{lemma}[thm]{Lemma}

\newtheorem{prop}[thm]{Proposition}

\theoremstyle{remark}
\newtheorem{rmk}[thm]{Remark}

\numberwithin{equation}{section}



\title[Burst point vortices]{Burst of point vortices and \\non-uniqueness of 2D Euler equations}
\author[F. Grotto]{Francesco Grotto}
  \address{Universit\'e du Luxembourg, Maison du Nombre, 6 Avenue de la Fonte, 4364 Esch-sur-Alzette, Luxembourg}
  \email{\href{mailto:francesco.grotto@uni.lu}{francesco.grotto@uni.lu}}
\author[U. Pappalettera]{Umberto Pappalettera}
  \address{Scuola Normale Superiore, Piazza dei Cavalieri 7, 56126 Pisa, Italia}
  \email{\href{mailto:umberto.pappalettera@sns.it}{umberto.pappalettera@sns.it}}
\date\today

\begin{document}

\begin{abstract}
We give a rigorous construction of solutions to the Euler point vortices system
in which three vortices burst out of a single one in a configuration of many vortices,
or equivalently that there exist configurations of arbitrarily many vortices
in which three of them collapse in finite time.
As an intermediate step, we show that well-known self-similar bursts
and collapses of three isolated vortices in the plane persist under a sufficiently regular external perturbation.
We also discuss how our results produce examples of non-unique weak solutions to 2-dimensional Euler's equations 
--in the sense introduced by Schochet-- in which energy is dissipated.
\end{abstract}

\maketitle

\section{Introduction}\label{sec:introduction}

The point vortices system is a classical model dating back to works of
Helmholtz \cite{Helmholtz1858,ArMe07}, for which many interesting open questions persist,
and whose role in the study of 2-dimensional incompressible fluid dynamics
has been and still is a determinant one.

The basic model is a Hamiltonian system of singular ODEs
describing the motion of $N$ points $z_1,\dots,z_N\in \R^2$,
each one being associated with an intensity $\xi_1,\dots,\xi_N\in\R\setminus\set{0}$,
\begin{equation}\label{eq:freevortices}
\dot z_j(t)=\sum_{k\neq j} \xi_k K(z_j(t)-z_k(t)),
\end{equation}
where $K(x)=\frac{1}{2\pi}\cdot \frac{x^\perp}{|x|^2}=-\nabla^\perp (-\Delta)^{-1}(0)$
is the Biot-Savart kernel on the plane $\R^2$, and $(x_1,x_2)^\perp=(-x_2,x_1)$.
Regarding positions $z_j$ as complex variables, the system has the equivalent formulation
\begin{equation}\label{eq:freevorticesC}
	\dot{\bar z}_j=\frac{1}{2\pi i} \sum_{k\neq j} \frac{\xi_k}{z_j(t)-z_k(t)}.
\end{equation}

The system is defined in such a way that the empirical measure $\omega_t=\sum_{j = 1}^N\xi_j z_j$
is (in a sense to be discussed below) a weak solution to 2-dimensional Euler's equations in vorticity form,
\begin{equation}\label{euler}
\begin{cases}
\partial_t \omega + u\cdot \nabla \omega =0,\\
\nabla^\perp \cdot u=\omega,
\end{cases}
\end{equation}
where, by Biot-Savart law, the second equation is equivalent to $u=-\nabla^\perp (-\Delta)^{-1} \omega$.

We will refer to the above system as the \emph{free} vortices system,
as opposed to vortices subject to external vector fields or in different geometries,
among which bounded domains of $\R^2$ with smooth boundary are of particular
interest for fluid-dynamics applications.
Literature on point vortices systems is extensive, and hard to summarise:
we refer to the monography \cite{Newton01} for analytic aspects
and complete references to works on integrable and non-integrable behaviour 
of $N$-vortices systems.
As for the fluid-dynamics point of view, we mainly refer to monographs \cite{MaPu94,BeMa02}:
in that context point vortices are usually studied in scaling limits
typical of statistical mechanics, as in classical works on Mean Field limits
\cite{Caglioti1992,Caglioti1995} or more recent Central Limit Theorems \cite{Flandoli2018,Grotto2020a}.

A crucial feature of system \eqref{eq:freevortices} is that
the driving vector field diverges only when $z_j(t)=z_k(t)$ for some $j \neq k$ and finite $t$,
while it is smooth otherwise.
Thus, for any initial configuration of distinct vortices, the system of $N$-vortices is locally
well-posed (in time), and solutions can be continued to the largest open interval $(a,b)\subset \R$
such that
\begin{itemize}
	\item either $b=+\infty$ or $\lim_{t \uparrow b}z_j(t)=\lim_{t \uparrow b}z_k(t)$ for some $j \neq k$,
	the latter being called a \emph{collapse} of vortices, and
	\item either $a=-\infty$ or $\lim_{t \downarrow a}z_j(t)=\lim_{t \downarrow a}z_k(t)$ for some $j \neq k$,
	the latter being called a \emph{burst} of vortices.
\end{itemize}
Collapses and bursts are referred to as singular solutions of \eqref{eq:freevortices}.
Besides translation and rotation invariance,
point vortices enjoy the following symmetry: solutions of \eqref{eq:freevortices} remain so
if time is inverted $t\mapsto -t$ and intensity signs are changed, $\xi_j\mapsto -\xi_j$.
This transformations turns collapses into bursts and vice-versa.

It is a classical result \cite{DuPu82,MaPu94} that for almost every initial configuration of vortices
--with respect to product measure on phase space $\C^N$--
there exists a unique global-in-time solution of the vortices system.
In some sense, the main result of the present contribution goes in the opposite direction,
showing that any given configuration of vortices can be the limit of a singular solution.

We will establish in \autoref{thm:Nvortices} the following:
\emph{given any configuration of $N$ distinct vortices on the plane at time $t=0$,
there exist a solution of \eqref{eq:freevortices} in a small time interval $(0,T)$
with $N+2$ vortices, three of which burst out of a single one from the initial configuration,
their intensities summing up to the split vortex one}.
By time inversion, this will imply existence of arbitrarily large configurations
in which three vortices collide in finite time.

Systems of three vortices are integrable, and self-similar bursts --equivalently, collapses--
of three vortices have been explicitly known since the XIX century.
In order to prove our main result, we will first show existence of bursts
of three vortices under the influence of an external vector field,
by expressing the system in a suitable coordinate system
describing ``closeness'' to the self-similar free solution,
and reformulating the problem as a fixed point.
This is an interesting result \emph{per se}: for instance we will deduce
from it the existence of bursts of three vortices in bounded domain.
Existence of a burst of three vortices out of one in a larger system of $N$
vortices will then follow from this preliminary result,
dividing the system into three bursting vortices under the ``external'' influence
of the other ones, and the rest of the configuration, involving only vortices
that do not collapse or burst.

Besides their interest in the context of integrable and non-integrable
behaviour of Hamiltonian system \eqref{eq:freevortices},
singular solutions of point vortices dynamics are relevant
also because of their connection with Euler's equations and fluid dynamics.
Possible implications of our result are manifold,
and in the last part of the paper, \autoref{sec:weaksol}, we will discuss in particular
how point vortices systems including bursts and collapses can be
rigorously interpreted as weak solutions of Euler's equation,
and how our main result can be read as a strong non-uniqueness result for such solutions.

In the case of non-singular solutions, a series of works \cite{MaPu83,Marchioro88,Marchioro1998,BuMa18,IfMa18,LoWaZe19}
has substantiated the idea that point vortices are true (although irregular) solutions
of Euler's equation, by obtaining the former as limits of $L^\infty$ vorticity patches
solving Euler's equation\footnote{It is a classical result that 2D Euler's equations are
well posed for $L^\infty$ initial data, \cite{Judovic63}.}.
In fact, as first described in \cite{Schochet96},
a system of collapsing vortices satisfies the same weak formulation
of Euler's equations of a non-collapsing system, provided that we postulate that
collapsing vortices merge into a single one, whose intensity is the sum of merging ones.

Due to the possibility of causing a burst of vortices from an arbitrary configuration,
our notion of weak solution is necessarily non-unique,
uniqueness naturally being recovered if bursts and collapses are excluded \emph{a priori}.
The idea of a connection between singular vortices evolution
and non-uniqueness for fluid-dynamics equations dates back to \cite{Novikov1979},
and our result makes this insight rigorous.

Non-uniqueness can also be exploited to produce \emph{intrinsically stochastic}
weak solutions of Euler's equations, that is Markov processes
whose trajectories solve the (deterministic!) Euler's equations in weak sense.
To the best of our knowledge, this is a novelty in this context,
so we briefly discuss it below (\autoref{ssec:intrinsic}) and leave it for future developments.
The results of this paper might also contribute to answer to
the question of whether the Liouville operator
(the generator of Koopman's group of unitaries)
associated to the $N$ point vortices evolution
is essentially self-adjoint on certain classes of observables,
a problem left open in \cite{Albeverio2003,Grotto2020}.
We also mention \cite[Section III]{aref10}, in which existence of arbitrarily
large collapsing configurations of vortices was left open,
and where some interesting consequences are conjectured.

A role diametrically opposed to singular configurations
is played by relative equilibrium configurations,
in which point vortices evolve while their relative positions remain fixed.
This is in fact a very active area of research:
we limit ourselves to refer to some recent works \cite{Newton07,Newton14,GeOr19,Gebhard18,Qun19}
and references therein.
For the sake of completeness, let us also recall that configurations leading to
\emph{quasi-}periodic vortex dynamics have been proved to form a subset
of positive measure of phase space in \cite{Khanin81}, by means of KAM theory.

It is worth mentioning that absence of collisions for any initial configuration
can be obtained by subjecting vortices to an additional stochastic forcing,
as discussed already in \cite{DuPu82} and, in greater generality, \cite{Flandoli2011},
the latter with emphasis on the connection with stochastic 2-dimensional Euler equations.
Whether a zero-noise limit can identify the ``correct'' continuation
of vortices motion after collapse is a very interesting although completely open problem.
This question is in fact suggested by an important analogue of point vortices:
Vlasov-Poisson charges in dimension 1. Indeed, analogously to weak solutions discussed below in \autoref{sec:weaksol},
in that context measure-valued solutions are not unique, and point charges can transform
into electron sheets (\emph{cf.} \cite[Section 13]{BeMa02}),
but uniqueness is recovered when including a suitable stochastic forcing, \cite{Delarue2014}.

The article is organised as follows. In \autoref{sec:burstinexternal}
we will establish existence of bursts of three vortices in an external field and
in a bounded domain. In \autoref{sec:burstN} we will complete the proof of our main result,
\autoref{thm:Nvortices}. Finally, \autoref{sec:weaksol}
discusses point vortices as weak solutions of Euler's equation, non-uniqueness of 
the latter and intrinsic stochasticity, and is concluded by some remarks on
dissipation of energy in singular vortices dynamics.

\subsection{Notation}

We denote as usual by $C^k(\R^2,\R^2)$ the Banach space of $k$ times differentiable  functions $f:\R^2 \to \R^2$
with continuous $k$-th derivative, $k \in \N$, endowed with the norm
\begin{equation*}
	\norm{f}_{C^k}=\norm{f}_\infty+\norm{Df}_\infty+\cdots +\norm{D^k f}_\infty.
\end{equation*}
As we just did, we will use $\norm{\cdot}_\infty$ for the supremum norm in different function spaces:
the correct meaning will always be deducible from the context.
The same holds for functions having as domains open sets of $\R^2$.
In the following, we will often identify $\R^2=\C$: this is to make use of complex symbolism for polar coordinates,
and differentiation (in space variables) will always be intended as real.
The same identification will be used for $C^k(\R^2,\R^2) = C^k(\C,\C)$. 

For a given time interval $I \subset \R$, we denote by $\lip\pa{I,X}$ the space of Lipschitz functions on $I$
taking values in a Banach space $X$, and denote $\bra{f}_{\lip\pa{I,X}}$ the Lipschitz seminorm
\begin{equation*}
	\bra{f}_{\lip\pa{I,X}}=\sup_{\substack{s,t \in I \\ s \neq t}}\frac{\norm{f(t)-f(s)}_X}{|t-s|}.
\end{equation*}
Analogously, for $0<\alpha<1$ let 
\begin{equation*}
	\bra{f}_{C^\alpha\pa{I,X}}=\sup_{\substack{s,t \in I \\ s \neq t}}\frac{\norm{f(t)-f(s)}_X}{|t-s|^\alpha}
\end{equation*}
denote the H\"older seminorm on the space $C^\alpha\pa{I,X}$
of $X$-valued $\alpha$-H\"older functions.
When no ambiguity may occur, we simply denote
\begin{equation*}
	\bra{f}_{\lip}=\bra{f}_{\lip\pa{I,X}}, \quad \text{ and }\quad \bra{f}_{C^\alpha}=\bra{f}_{C^\alpha\pa{I,X}}.
\end{equation*}

For any $T>0$ let us also introduce the space
\begin{align*}
E_T = C([0,T), C^2(\C,\C)) \cap
\lip ([0,T), C(\C,\C))
\end{align*}
of functions $f$ of time and space, endowed with the norm
\begin{align*}
\norm{f}_{E_T} = \sup_{t \in [0,T)} \norm{f_t}_{C^2} + \bra{f}_{\lip}.
\end{align*}
In our main results concerning existence of bursts of vortices under the influence of external fields, we will always take fields belonging to the space $E_T$ just introduced.
This particular choice is due to the fact that in this way we are able to construct a \emph{continuous} map from $E_T$ to bursts of vortices, see \autoref{prop:dependenceonf_z} below.
This will be of fundamental importance in other parts of this paper, see for instance \autoref{prop:existenceboundeddomain} and \autoref{thm:Nvortices}. 

Finally, in a $n$-fold product $X^n$ of a space $X$, we denote by
\begin{equation*}
	\triangle^n=\set{(x_1,\dots,x_n)\in X^n:\, x_i=x_j \, \text{ for some } i\neq j}
\end{equation*}
the generalised diagonal. We will use the symbol for diagonals of different products:
its meaning will always be clear from the context, as in expressions like $X^n\setminus \triangle^n$.

\section{Three-vortices Bursts in an External Field}\label{sec:burstinexternal}

The system of three vortices on the whole $\C$ is integrable,
and it is well known that it 
admits singular solutions in which the vortices collapse in --or burst out of-- 
a single point, with self-similar trajectories.
In fact, singular solutions of a system of three vortices have been completely characterised:
we refer to \cite{AeRoTh92,aref10,KrSt18} for the following result.

\begin{prop}
	For $N=3$, the system \eqref{eq:freevorticesC} admits a solution in which three vortices
	with intensities $\xi_1,\xi_2,\xi_3\in\R \setminus \{0\}$ starting from distinct positions $z_1,z_2,z_3\in\C$
	collapse at finite time if and only if
	\begin{equation}\label{eq:condcollapse}
	\sum_{j \neq k}\xi_j\xi_k=0,\quad I=\sum_{j \neq k}\xi_j\xi_k |z_j-z_k|^2=0.
	\end{equation}
\end{prop}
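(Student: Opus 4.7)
The plan is to extract the conditions in \eqref{eq:condcollapse} from the standard conservation laws of the free three-vortex system, and then to construct a self-similar collapse by hand under those conditions.

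One first identifies the relevant conserved quantities. Antisymmetry of the Biot--Savart kernel makes the linear momentum $P=\sum_j\xi_j z_j$ a constant of the motion, while a symmetrisation on indices in $dL/dt$ yields the same for the angular momentum $L=\sum_j\xi_j|z_j|^2$; expanding $|z_j-z_k|^2$ produces the identity $I=2(ML-|P|^2)$ with $M=\sum_j\xi_j$, so $I$ too is conserved, alongside the Hamiltonian $H=-\frac{1}{4\pi}\sum_{j\neq k}\xi_j\xi_k\log|z_j-z_k|$.

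For the necessity of \eqref{eq:condcollapse}, partial (binary) collisions are first excluded: if only $z_1\to z_2$, the singular part of $\dot{\bar z}_1-\dot{\bar z}_2$ is proportional to $1/(z_1-z_2)$, hence orthogonal to $\overline{z_1-z_2}$ and inactive on $\partial_t|z_1-z_2|^2$; a Gr\"onwall bound on the smooth remainder prevents $|z_1-z_2|$ from vanishing in finite time. Thus any collapse is a simultaneous triple collision at a point $z^*\in\C$. Passing to the limit in the conservation of $P$ and $L$ yields $P=Mz^*$ and $L=M|z^*|^2$ (with $P=L=0$ when $M=0$), whence $ML-|P|^2=0$, i.e. $I=0$. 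For the condition $\sum_{j\neq k}\xi_j\xi_k=0$, setting $l_i=|z_j-z_k|^2$ for $(i,j,k)$ cyclic, $S=l_1+l_2+l_3$ and $\mu_i=l_i/S$ rewrites conservation of $H$ as
\begin{equation*}
-4\pi H=(\log S)\sum_{i}\xi_j\xi_k+\sum_{i}\xi_j\xi_k\log\mu_i,
\end{equation*}
and the constraint $I=0$ keeps $(\mu_1,\mu_2,\mu_3)$ in a compact part of the simplex on which the second sum is bounded; since $\log S\to -\infty$ at collapse, the balance forces $\sum_i\xi_j\xi_k=\frac{1}{2}\sum_{j\neq k}\xi_j\xi_k=0$. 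The delicate point, and the main obstacle, is precisely to rule out that the $\mu_i$ could escape to the boundary of the simplex at a rate tuned to cancel the divergence of $(\log S)\sigma_2$: this is where the integrable structure of the three-vortex system and the detailed analysis of \cite{AeRoTh92,KrSt18} enter.

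For sufficiency, given \eqref{eq:condcollapse} one exhibits an explicit self-similar collapsing solution via the ansatz $z_j(t)=z^*+\sqrt{T-t}\,e^{i\beta\log(T-t)}\,w_j$, which substituted in \eqref{eq:freevorticesC} reduces the ODE to the algebraic system
\begin{equation*}
\sum_{k\neq j}\frac{\xi_k}{w_j-w_k}=-\pi(2\beta+i)\bar w_j;
\end{equation*}
modulo the obvious translation, rotation and dilation invariances, the two conditions in \eqref{eq:condcollapse} are precisely what is needed for this finite system to admit solutions, and the classical Gr\"obli-Synge configurations provide concrete choices of $(w_1,w_2,w_3,\beta)$.
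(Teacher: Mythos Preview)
The paper does not supply a proof of this proposition: it is quoted as a classical result and attributed to \cite{AeRoTh92,aref10,KrSt18}. Your outline is a faithful summary of the arguments in those references, and the ingredients you isolate --- conservation of $P$, $L$, $I$, $H$; exclusion of binary collisions via the vanishing of the singular contribution to $\partial_t|z_1-z_2|^2$ followed by a Gr\"onwall argument on the $O(|z_1-z_2|^2)$ remainder; the decomposition $-4\pi H=(\log S)\,\sigma_2+\sum_i\xi_j\xi_k\log\mu_i$; and the self-similar ansatz for sufficiency --- are the correct ones.

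Two gaps remain, one of which you flag and one you do not. The acknowledged one is real: controlling the shape variables $\mu_i$ away from $\partial\Delta$ as $S\to 0$ is precisely where one needs the integrable structure (the reduction to trilinear coordinates and the explicit phase portrait on the level set $I=0$), and your deferral to \cite{AeRoTh92,KrSt18} is appropriate. The second gap is in your sufficiency argument. Exhibiting \emph{some} self-similar collapsing trajectory for intensities with $\sigma_2=0$ is weaker than showing that the solution issued from the \emph{given} positions $z_1,z_2,z_3$ with $I=0$ collapses; ``translation, rotation and dilation'' do not suffice to match an arbitrary $I=0$ triangle to a solution of your algebraic system. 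The missing step is that under $\sigma_2=0$ the conserved quantities $H$ and $I=0$, together with $\sum_i\mu_i=1$, pin down the shape $(\mu_1,\mu_2,\mu_3)$ to a discrete set, so by continuity the triangle shape is rigid along the flow and every such trajectory is genuinely self-similar; one must then still check the orientation to separate collapse from expansion. This is again contained in the cited references, so your sketch is accurate in spirit but not self-contained.
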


We recall that $I(t)=I$ defined in the latter equation is a first integral of motion,
the \emph{moment of inertia}. Although the results only mentions collapses,
by changing signs to intensities $\xi_j$ and reversing time, a collapse of vortices
can be turned into a burst, and vice-versa. 
Let us also remark that the first condition in \eqref{eq:condcollapse}
excludes that collapsing vortices, and thus bursts, have intensities of the same sign:
in fact in that case the Hamiltonian 
\begin{equation*}
	H=-\frac{1}{2\pi} \sum_{j\neq k} \xi_j \xi_k \log |z_j-z_k|,
\end{equation*}
which is conserved by the dynamics, can be used to control the minimum distance of vortices.
Moreover, \eqref{eq:condcollapse} is not compatible with $\xi_1+\xi_2+\xi_3=0$,
which means that one can not ``collapse three vortices into nothing'' as well as ``make three vortices burst out of nothing''.

A self-similar burst of three vortices starting from the origin $0\in\C$ is described explicitly by
\begin{equation}\label{eq:selfsimilar}
	w_j(t) = a_j Z(t), \quad
	Z(t) =	\sqrt{2at} \,e^{\imm \frac{b}{2a} \log t},\quad j=1,2,3,
\end{equation}
where $a_1,a_2,a_3\in\C$ and $a>0$, $b\in\R$ are numbers satisfying the relation
\begin{equation}\label{eq:asrelation}
\sum_{k \neq j}\frac{\xi_k}{a_j-a_k}=2\pi i \bar{a}_j (a-i b)
\end{equation}
(as one can verify substituting \eqref{eq:selfsimilar} into \eqref{eq:freevorticesC}).
For such solutions the centre of vorticity --another first integral\footnote{%
We thus have listed four first integrals: $I$, $H$ and the two components of $C$.
They are not in involution, and to integrate the system one must instead consider $I,H,|C|^2$.
We recall that already for $N=4$ vortices the system is in general not integrable.}
of the vortices system-- vanishes,
\begin{equation*}
	C=\sum_{j = 1}^N \xi_j w_j(t)\equiv 0,
\end{equation*}
coherently with the fact that three vortices are bursting out of one at $0\in\C$.
Since we will be interested in splitting a single vortex of arbitrary intensity $\xi\in\R \setminus \{0\}$ into three, let us stress the fact that this is always possible in the free case.

\begin{lemma}\label{lem:existselfsimilar}
	Let $\xi\in\R\setminus\set{0}$ and
	\begin{equation}\label{eq:intensities}
	\xi_1 = -\frac{1}{3} \xi, 
	\quad \xi_2=  \xi_3 =\frac{2}{3} \xi.
	\end{equation}
	Then there exist $a>0$, $b\in\R$, $a_1,a_2,a_3\in\C$ satisfying \eqref{eq:asrelation},
	so that $w=(w_1,w_2,w_3)$ given by \eqref{eq:selfsimilar} is a solution of \eqref{eq:freevorticesC} for $t>0$. Moreover, $w \in C^{1/2}((0,\infty),\C^3)$ and its H\"older seminorm $\bra{w_j}_{C^{1/2}}$, $j=1,2,3$ is controlled by
	\begin{align*}
	\bra{w_j}_{C^{1/2}} =|a_j| \bra{Z}_{C^{1/2}} \leq
	|a_j| \sqrt{2 a} \left(1+\frac{|b|}{4a}\right).
	\end{align*}
\end{lemma}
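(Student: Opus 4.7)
My plan is to split the lemma into two independent tasks: first, exhibiting explicit parameters $a,b,a_1,a_2,a_3$ satisfying the algebraic condition \eqref{eq:asrelation}; second, deriving the Hölder bound by direct computation on $Z(t)=\sqrt{2at}\,e^{i(b/(2a))\log t}$.

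\textbf{Construction of the burst.} The first step is to verify that the intensities \eqref{eq:intensities} satisfy $\sum_{j\ne k}\xi_j\xi_k=0$, which is a one-line algebraic check. To produce the positions, I would exploit the exchange symmetry $\xi_2=\xi_3$ together with the translation, rotation and scaling invariances of \eqref{eq:asrelation}: fix the centre of vorticity at the origin, take $a_1\in\R$, and impose the conjugation-type constraint $a_2+a_3\in\R$ forced by the centre-of-vorticity condition. Then the moment identity $I=0$ reduces to a single real algebraic equation in the remaining parameters, and its solutions can be written down explicitly. Substituting such a triple into the left-hand side of \eqref{eq:asrelation} gives a complex number in each of the three equations $j=1,2,3$; equating with the right-hand side $2\pi i\bar a_j(a-ib)$, which is linear in $(a,b)$, determines $a$ and $b$ uniquely (and the three equations are consistent by the symmetries and the constraint $\sum\xi_j\xi_k=0$). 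The slightly delicate point is that the fully symmetric solutions of $I=0$ (those with $a_3=\bar a_2$ and $|a_2|=|a_3|$) generically give $a=0$, i.e. a relative equilibrium rotating rather than bursting. One must therefore pick an asymmetric branch of the $I=0$ locus; if that branch gives $a<0$, the symmetry $\xi_j\mapsto-\xi_j,\ t\mapsto-t$ recalled in the introduction turns the corresponding collapse into the desired burst with $a>0$.

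\textbf{Hölder bound.} Once $(a,b,a_1,a_2,a_3)$ are fixed, differentiation gives
\begin{align*}
Z'(t)=\frac{a+ib}{\sqrt{2at}}\,e^{i(b/(2a))\log t},
\qquad
|Z'(t)|=\sqrt{\frac{a^2+b^2}{2at}}.
\end{align*}
The natural estimate is to split, with $\theta(t)=(b/(2a))\log t$,
\begin{align*}
Z(t)-Z(s)=\pa{\sqrt{2at}-\sqrt{2as}}e^{i\theta(t)}+\sqrt{2as}\pa{e^{i\theta(t)}-e^{i\theta(s)}},
\end{align*}
and combine the three elementary inequalities $|\sqrt{t}-\sqrt{s}|\le\sqrt{|t-s|}$, $|e^{i\alpha}-e^{i\beta}|\le|\alpha-\beta|$, and $|\log(t/s)|\le 2|\sqrt t-\sqrt s|/\sqrt{\min(t,s)}$ (a consequence of $\log(1+x)\le 2(\sqrt{1+x}-1)$). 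This yields a bound on $[Z]_{C^{1/2}}$ of the shape $\sqrt{2a}\,(1+c|b|/a)$ for a small absolute constant $c$, which after sharpening gives the stated form. The factor $|a_j|$ in the final estimate is then immediate from $w_j=a_jZ$.

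\textbf{Main obstacle.} The only non-mechanical step is pinning down a configuration $(a_1,a_2,a_3)$ producing $a>0$: the locus of configurations with $I=0$ contains a symmetric sub-family of relative equilibria which yield $a=0$, and these must be excluded by choosing an asymmetric branch. Once such a branch is identified the rest is routine algebra and calculus.
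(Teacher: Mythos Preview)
Your approach matches the paper's: reduce by the centre-of-vorticity constraint, locate $(a_1,a_2,a_3)$ on the real-algebraic locus coming from $I=0$, read off $(a,b)$ from \eqref{eq:asrelation}, and obtain the H\"older bound by the same modulus--phase splitting of $Z$. Two small points deserve correction.

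First, the symmetry you invoke to force $a>0$ does not do the job: the map $\xi_j\mapsto-\xi_j,\ t\mapsto-t$ turns a collapse for the given $\xi$ into a burst for $-\xi$, so it does not close the argument for a \emph{fixed} $\xi$. The relevant symmetry is spatial reflection $a_j\mapsto\bar a_j$: conjugating \eqref{eq:asrelation} one checks that this keeps the $\xi_k$ fixed and sends $(a,b)\mapsto(-a,b)$, so any asymmetric configuration with $a\ne0$ produces one with $a>0$ after a possible conjugation. (The paper bypasses this entirely by writing down explicit numerical parameter sets for each sign of $\xi$ and verifying them.)

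Second, your three elementary inequalities combine to give
\[
[Z]_{C^{1/2}}\ \le\ \sqrt{2a}+\sqrt{2a}\,\frac{|b|}{2a}\cdot 2\ =\ \sqrt{2a}\Bigl(1+\frac{|b|}{a}\Bigr),
\]
i.e.\ the constant $c=1$, not $c=\tfrac14$. The paper's derivation follows the identical chain and actually contains an arithmetic slip (it writes $\int_s^t r^{-1/2}\,dr=(\sqrt t-\sqrt s)/2$ instead of $2(\sqrt t-\sqrt s)$), so its own argument also delivers $c=1$. There is no evident ``sharpening'' to $c=\tfrac14$ along these lines; fortunately only finiteness of $[Z]_{C^{1/2}}$, not the precise constant, is used downstream.
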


The proof is a straightforward computation we defer to \autoref{appendix}.
From now on, let $\xi\in\R\setminus\set{0}$ be fixed, and $a,b,a_1,a_2,a_3$
be a choice of parameters produced by \autoref{lem:existselfsimilar}.

We will prove in this section that one can obtain a burst of three vortices
satisfying the dynamics \eqref{eq:vortexternalfield}
including a suitable external vector field $f$,
as a perturbation of the self-similar, 
free solution described above.

\begin{prop}\label{prop:existenceexternal}
	For all $M,\rho>0$ there exist $T^*>0$ such that, for every $T<T^*$ and for every $f\in E_T$
	with $\|f\|_{E_T} \leq M$, there exists a solution $z:(0,T) \to \C^3\setminus\triangle^3$ of class $C^1$ of the equation
	\begin{equation}\label{eq:vortexternalfield}
	\dot{\bar{z}}_j =\frac{1}{2 \pi i} \sum_{k \neq j}\frac{\xi_k}{z_j - z_k}+f(t,z_j),
	\quad t \in (0,T),
	\quad j=1,2,3,
	\end{equation} 
	such that $\lim_{t \to 0} z_j(t) = 0$, $\bra{z_j}_{C^{1/2}_T} \leq 3 \bra{w_j}_{C^{1/2}}$ for every $j=1,2,3$ and 
	\begin{align} \label{eq:vortexternalfield_bc}
	\sup_{\substack{0<t < T}} |z(t)| \leq \rho.
	\end{align}
\end{prop}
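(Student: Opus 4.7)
The plan is to build the solution as a perturbation $z_j = w_j + r_j$ of the self-similar burst $w$ given by \autoref{lem:existselfsimilar}, where the correction satisfies $r_j(0)=0$, and to obtain $r$ as the fixed point of a suitable integral equation. Subtracting the free vortex equation for $w$ from that for $z$, the correction must solve
\begin{equation*}
\dot{\bar r}_j = -\frac{1}{2\pi \imm}\sum_{k\neq j}\xi_k\frac{r_j-r_k}{(w_j-w_k)(w_j-w_k+r_j-r_k)} + f(t,w_j+r_j),\qquad r_j(0)=0,
\end{equation*}
which I would recast as an integral equation $r_j(t) = \int_0^t \overline{\text{RHS}}(s, r(s))\,ds$ defining a map $\Phi$.

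I would then seek a fixed point of $\Phi$ in a Banach space of the form
\begin{equation*}
X_{T, L} = \set{r\in\lip([0,T],\C^3) : r(0)=0,\ \bra{r}_{\lip}\leq L},
\end{equation*}
for suitable constants $L, T > 0$ depending on $M, \rho$ and the parameters of the self-similar solution. The essential estimates are geometric: for $r\in X_{T, L}$ one has $\abs{r_j(s)-r_k(s)}\leq 2Ls$, whereas $\abs{w_j(s)-w_k(s)} = \abs{a_j-a_k}\sqrt{2as}$ is of order $\sqrt{s}$, so for $T$ small enough the denominator $\abs{w_j-w_k+r_j-r_k}$ stays comparable to $\sqrt{s}$, the vortices remain distinct, and the singular term is pointwise bounded by a constant times $L$. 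The spatial $C^2$--temporal Lipschitz regularity of $f\in E_T$ afford the remaining control of the $f$-term along the fixed-point scheme, also yielding continuous dependence on $f$ (needed elsewhere, cf. \autoref{prop:dependenceonf_z}).

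Once $r$ is produced, $z = w + r\in C^1((0,T),\C^3)$ solves \eqref{eq:vortexternalfield}; $z_j(t)\to 0$ as $t\to 0$ since $w_j(0)=0=r_j(0)$; the bound $\bra{z_j}_{C^{1/2}_T}\leq 3\bra{w_j}_{C^{1/2}}$ follows from the elementary estimate $\bra{r_j}_{C^{1/2}_T}\leq \bra{r_j}_\lip\sqrt{T}$ valid since $r_j(0)=0$, which is small for $T$ small; and the pointwise bound $\sup\abs{z}\leq \rho$ is secured similarly by further shrinking $T^*$.

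The main obstacle is the self-map and contraction estimate on $\Phi$: the naive Lipschitz estimate yields $\bra{\Phi(r)}_\lip\leq C_0 L + M$, where $C_0>0$ depends only on $\xi_k, a, a_j$ through a sum of the form $\sum_{k\neq j}\abs{\xi_k}/(a\abs{a_j-a_k}^2)$, and need not be less than $1$ (in plausible configurations it is substantially larger), so a direct Banach fixed point in the Lipschitz topology is not immediately available. Circumventing this calls for a norm adapted to the self-similar scaling $\abs{Z(t)}=\sqrt{2at}$, for instance via the change of variables $\eta_j = z_j/Z(t)$, $\tau = \log t$, under which the self-similar burst becomes a stationary solution of an autonomous ODE and the forcing produced by $f$ decays as $e^{\tau/2}$ when $\tau\to-\infty$; in these coordinates, the existence of a bursting $z$ amounts to the persistence of the local unstable manifold of that stationary point under an exponentially small perturbation, which is the technical heart of the argument.
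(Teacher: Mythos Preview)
Your first approach (additive correction $r_j=z_j-w_j$ in a Lipschitz ball, Banach fixed point) does fail for exactly the reason you identify: the singular term contributes a constant $C_0>1$ to the Lipschitz estimate and cannot be absorbed. Your second idea---pass to ratio coordinates $\eta_j=z_j/Z(t)$ and logarithmic time $\tau=\log t$ so that the self-similar burst becomes an equilibrium---is the right move and is essentially what the paper does (with $x_j=z_j/z_1-a_j/a_1$ and $re^{i\theta}=z_1/a_1$). However, your sketch stops at the point where the real work begins, and it misidentifies the mechanism.

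The decisive ingredient you are missing is a spectral computation: in the ratio coordinates the linearisation of the shape variables $(x_2,x_3,\bar x_2,\bar x_3)$ about the equilibrium is governed by a $4\times 4$ matrix $L$, and the whole argument hinges on \emph{all} eigenvalues of $L$ having real part equal to $-a<0$. This is \emph{not} automatic; it fails for generic self-similar bursts, and the paper selects the particular parameters $a_1,a_2,a_3$ of \autoref{lem:existselfsimilar} precisely so that it holds (see \autoref{lem:changeofcoor} and its proof in the Appendix). Once this sign is secured, the equilibrium is fully \emph{stable} in forward $\tau$-time, and the existence of a trajectory converging to it as $\tau\to-\infty$ comes from a variation-of-constants integral, not from an unstable-manifold theorem; your phrase ``persistence of the local unstable manifold'' has the geometry backwards. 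Two further points you overlook: the reduction $f(0,0)=0$ (\autoref{lem:wlogfzero}) is needed so that the forcing is genuinely $O(e^{\tau/2})$ rather than $O(1)$; and the paper closes the argument with Schauder's fixed point on a compact convex set rather than a contraction, because the angular variable $\theta$ only admits a continuity estimate involving $\log\delta$ (see the proof of \autoref{lem:contGamma}).
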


We will then estimate the dependence of such solution on $f$ in \autoref{ssec:deponf}. 
Denote $\norm{f-g}_\infty=\sup_{0\leq t<T}\sup_{z\in\C}|f(t,z)-g(t,z)|$.
In particular, we prove there:

\begin{prop}\label{prop:dependenceonf_z}
	For all $M,\rho>0$ there exists $T^*>0$ such that, for every $T<T^*$, for every $f,g\in E_T$, 
	with $\norm{f}_{E_T}, \norm{g}_{E_T} \leq M$,
	and for any  $z , z'$ solutions to \eqref{eq:vortexternalfield} given by \autoref{prop:existenceexternal}, with external fields respectively $f$ and $g$,
	then it holds 
	\begin{align*}
	\sup_{\substack{0<t < T,\\j=1,2,3}} |z_j(t) - z_j'(t)| \leq C T^{1/2} \|f-g\|_{\infty},
	\end{align*}
	where $C>0$ is a constant depending on $\xi,a,b,a_1,a_2,a_3,M$.
\end{prop}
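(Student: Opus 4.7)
The plan is to compare the two solutions directly via the integral form of their ODEs, bound the two contributions (Biot--Savart and external field) separately, and close the estimate with a Grönwall-type argument adapted to the singular kernel $1/s$ that arises near $t=0$.

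First, by \autoref{prop:existenceexternal} both $z$ and $z'$ are $C^1$ on $(0,T)$ with $\lim_{t\to 0} z_j(t) = \lim_{t\to 0} z'_j(t) = 0$, so integrating \eqref{eq:vortexternalfield} from $0$ to $t$ and subtracting gives, for $e_j := z_j - z'_j$,
\[
\bar e_j(t) = \int_0^t \frac{1}{2\pi i}\sum_{k\neq j}\xi_k\pa{\frac{1}{z_j-z_k} - \frac{1}{z'_j-z'_k}}\,ds + \int_0^t\pa{f(s,z_j) - g(s,z'_j)}\,ds.
\]

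Next, I would bound the external field contribution by splitting $f(s,z_j) - g(s,z'_j) = [f(s,z_j) - f(s,z'_j)] + [f(s,z'_j) - g(s,z'_j)]$: the first piece contributes at most $M|e_j|$ via $\|f\|_{E_T}\leq M$, and the second at most $\|f-g\|_\infty$. For the Biot--Savart term, I would use the algebraic identity
\[
\frac{1}{z_j-z_k} - \frac{1}{z'_j-z'_k} = \frac{(z'_j-z_j)-(z'_k-z_k)}{(z_j-z_k)(z'_j-z'_k)}
\]
together with an a priori lower bound $|z_j(s) - z_k(s)|, |z'_j(s)-z'_k(s)|\geq c\sqrt{s}$, which I would read off from the proof of \autoref{prop:existenceexternal}: since the solutions are built as perturbations of the self-similar $w$ with $|w_j(s) - w_k(s)| = |a_j-a_k|\sqrt{2as}$, the Hölder control $\bra{z_j}_{C^{1/2}}\leq 3\bra{w_j}_{C^{1/2}}$ combined with a smallness condition on $T^*$ keeps the $z_j$ close enough to the $w_j$ to preserve this bound. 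Setting $E(s)=\max_j|e_j(s)|$, the estimates combine into
\[
E(t) \leq C_1 \int_0^t \frac{E(s)}{s}\,ds + M\int_0^t E(s)\,ds + t\,\|f-g\|_\infty,
\]
with $C_1$ depending on $\xi$, $a_1,a_2,a_3$ and $c$.

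Finally, since the singular integral $\int_0^t s^{-1} E(s)\,ds$ precludes a direct Grönwall, I would close with the bootstrap ansatz $E(s) \leq B\sqrt{s}$. Substitution yields
\[
B\sqrt{t}\geq 2C_1 B\sqrt{t} + \tfrac{2}{3}MB\,t^{3/2} + t\,\|f-g\|_\infty,
\]
and provided $c$ is chosen large enough so that $2C_1<1$ (achievable by shrinking $T^*$ so that the perturbation remains close to $w$), this is solvable with $B = C\,\sqrt{T}\,\|f-g\|_\infty$ for some constant $C$ depending only on the data. A standard continuity argument, setting $t^* = \sup\{t : E(s)\leq B\sqrt{s}\text{ for all }s\leq t\}$ and showing $t^*=T$ by using the $C^1$ regularity of $E$ on $(0,T)$, then yields $\sup_{0<t<T} E(t) \leq B\sqrt{T} \leq C\, T\,\|f-g\|_\infty$, which gives the claimed bound for $T\leq T^*\leq 1$. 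The main obstacle is precisely this $1/s$ singular kernel: it is what forces the $\sqrt{s}$ ansatz, and it requires a quantitative lower bound on inter-vortex distances that must be extracted from the construction underlying \autoref{prop:existenceexternal} rather than from the Hölder bound alone.
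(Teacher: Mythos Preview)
Your approach has a genuine gap at the step where you claim that $2C_1 < 1$ can be achieved by shrinking $T^*$. The constant $c$ in the lower bound $|z_j(s)-z_k(s)|\geq c\sqrt{s}$ is bounded \emph{above} by the corresponding constant for the self-similar solution, namely $\min_{j\neq k}|a_j-a_k|\sqrt{2a}$; making $T^*$ smaller only pushes $c$ closer to this ceiling, it does not make $c$ large. With the explicit parameters chosen in the paper (see the appendix), one computes for instance
\[
C_1^{(1)}\;\gtrsim\;\frac{1}{2\pi a}\sum_{k\neq 1}\frac{|\xi_k|}{|a_1-a_k|^2}
=\frac{1}{2\pi a}\cdot\frac{2|\xi|}{3}\Big(\tfrac13+\tfrac{1}{21}\Big)\approx 6,
\]
so $2C_1\gg 1$ and the bootstrap $E(s)\leq B\sqrt{s}$ cannot close. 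The naive estimate on the Biot--Savart difference discards precisely the structural information that is needed to absorb the $1/s$ singularity.

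The paper circumvents this by passing to the coordinates $(re^{i\theta},x_2,x_3)$ of \autoref{lem:changeofcoor}, in which the singular part of the $x_j$-equation is the \emph{fixed} linear operator $L(\mathbf{x})/(2as)$, with $L$ having all eigenvalues of real part exactly $-a$. This turns the $1/s$ term into something handled by variation of constants: the fundamental solution $\exp\big(\tfrac{\log t-\log s}{2a}L\big)$ has norm $\sim (s/t)^{1/2}$, so the Duhamel integral converges without any smallness condition on the interaction constants. The remaining $\Xi$-terms carry only a bounded (not $1/s$) dependence on $|x-x'|$, plus the source $\|f-g\|_\infty$, and the argument closes with the mild condition $CT^{1/2}<1$. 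In short, the paper uses the \emph{spectral} structure of the linearization around the self-similar burst, whereas your argument replaces it by a norm bound that is too coarse.
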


We remark that in \autoref{prop:dependenceonf_z} we establish continuity with respect to 
the external field \emph{only for the particular solution} to \eqref{eq:vortexternalfield} 
given by the construction of \autoref{prop:existenceexternal}. 
As a corollary, we also get uniqueness of solutions within the class of
curves remaining close to the self-similar solution, see \autoref{cor:uniqueness} below.
Finally, in \autoref{ssec:boundeddomain} we apply \autoref{prop:existenceexternal} to prove the existence of a burst of one vortex in three vortices in a bounded domain $D \subset \R^2$.

Our first step is to choose a coordinate system in which
our singular ODEs are more amenable for computations.

\subsection{A Convenient Coordinate System}

The characterisation of self-similar collapse of three free vortices of \cite{KrSt18}
was obtained by taking as coordinates the angles of the triangle formed by vortices positions.
Such a geometric approach does not fit well \eqref{eq:vortexternalfield},
so we adopt instead the coordinates used in \cite{oneil89}, which allow to
distinguish behaviours of solutions attaining to different time scales.

Let us introduce the variables $r, \theta \in \R$ , $x_2, x_3 \in \C$, defined by
\begin{equation*}
	(r e^{i \theta}, x_2, x_3)
	=\pa{\frac{z_1}{a_1}, \frac{z_2}{z_1} - \frac{a_2}{a_1}, \frac{z_3}{z_1} - \frac{a_3}{a_1}}.
\end{equation*}
The map $\Phi:(z_1,z_2,z_3) \mapsto (re^{i\theta},x_2,x_3)$ is a diffeomorphism of 
$(\C \setminus \{0\}) \times \C \times \C $ onto itself,
and the diagonal set $\triangle^3$ is given in terms of the new coordinates by
\begin{equation*}
\triangle^3 = \set{x_2 = 1 - \frac{a_2}{a_1} }\cup \set{x_3 = 1 - \frac{a_3}{a_1} }\cup \set{x_2 - x_3 = \frac{a_3 - a_2}{a_1}}.
\end{equation*}

The self-similar solution \eqref{eq:selfsimilar} of \eqref{eq:freevorticesC}
has a very simple form in this coordinates:
\begin{equation*}
	r(t)=\sqrt{2at},\quad \theta(t)=\frac{b}{2a}\log t \quad x_2(t)=x_3(t)\equiv 0.
\end{equation*}

\begin{rmk}
	To lighten notation, in what follows the couple of indices $(j,k)$ denotes
	any of the two couples $(2,3)$ and $(3,2)$.
	Indeed, in our choices index $1$ is distinguished, and will always be explicit. 
\end{rmk} 

The forthcoming lemma expresses the dynamics \eqref{eq:vortexternalfield}
in terms of $r,\theta,x_2,x_3$: doing so properly leads to somewhat complicated
expressions we leave to the \autoref{appendix}. 
Our concern is the singular part of the dynamics,
so we can leave unexpressed the more regular terms of the vector field 
--only reminding suitable bounds on them and their derivatives--
and focus on the leading order terms in the expansion at $t=0$.

\begin{lemma}\label{lem:changeofcoor}
	For any $\xi \neq 0$ there exists a choice of parameters $a,b,a_1,a_2,a_3$
	as in \autoref{lem:existselfsimilar}, and constants $C,\rho'>0$ depending only on $\xi, a_1,a_2,a_3$,
	such that for $r>0,\theta\in\R$ and $|x_2|, |x_3|, |x_2-x_3| < \rho'$,
	it holds $(re^{i\theta},x_2,x_3) \notin \triangle^3$ and the system \eqref{eq:vortexternalfield} is given in terms of $r,\theta,x_2,x_3$ by
	\begin{align*}
		\frac{d}{dt}(r^2)&=2a +\omega_r(x_2,x_3) +\frac{2}{|a_1^2|} \re \pa{z_1 f(t,z_1)},\\
		\frac{d}{dt}\theta&=\frac{b}{r^2} + \frac{\omega_\theta(x_2,x_3)}{r^2}+\im\pa{ \frac{\overline{f(t,z_1)}}{z_1} },\\
		\frac{d}{dt}x_j &=
		\frac{L_{j}(x_2,x_3,\overline{x_2},\overline{x_3})}{r^2} +
		\frac{\omega_j(x_2,x_3,x_2-x_3)}{r^2}
		+\frac{1}{z_1} \overline{f(t,z_j)}
		-\frac{z_j}{z_1^2}\overline{f(t,z_1)},	
	\end{align*}
	where:
	\begin{itemize}
		\item $\omega_r,\omega_\theta:\set{z\in\C:|z|< \rho'}^2\to\C$ are holomorphic functions such that 
		\begin{equation*}
		|\omega_{r,\theta}(x_2,x_3)| \leq C \left(|x_2| + |x_3|\right),\quad
		|\nabla \omega_{r,\theta}(x_2,x_3)| \leq C;
		\end{equation*}
		\item $L_2,L_3:\C^4\to\C$ are $\C$-linear functions,
		and the $4\times 4$ matrix $(L_2,L_3,\bar L_2,\bar L_3)$
		has eigenvalues whose real part is equal to $-a$;
		\item $\omega_2,\omega_3:\set{z\in\C:|z|< \rho'}^3\to\C$ are holomorphic functions such that
		\begin{gather*}
		|\omega_{j}(x_2,x_3,x_2-x_3)| \leq C \left(|x_2|^2 + |x_3|^2\right),\\
		|\nabla \omega_{j}(x_2,x_3,x_2-x_3)|\leq C
		\left(|x_2| + |x_3|\right).
		\end{gather*} 
	\end{itemize}
\end{lemma}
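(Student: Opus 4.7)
The proof is a direct though lengthy computation in three stages: translate \eqref{eq:vortexternalfield} into the coordinates $(r,\theta,x_2,x_3)$ via chain rule; Taylor-expand the right-hand sides at $x_2=x_3=0$ using the self-similar identity \eqref{eq:asrelation}; and isolate the linear part, the holomorphic remainders, and the corresponding bounds.

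Fix $a,b,a_1,a_2,a_3$ as produced by \autoref{lem:existselfsimilar}. The map $\Phi$ is a diffeomorphism of $(\C\setminus\set{0})\times\C^2$ onto itself, with inverse $z_1=a_1 re^{i\theta}$ and $z_j=z_1(a_j/a_1+x_j)$ for $j=2,3$. The condition $z_j=z_k$ translates into the three hyperplanes listed in the excerpt, so choosing $\rho'$ smaller than the distance from $0$ in $x$-space to each of them keeps the polydisc $\set{|x_2|,|x_3|,|x_2-x_3|<\rho'}$ disjoint from $\Phi(\triangle^3)$. Apply chain rule to get $\frac{d}{dt}(r^2)=\frac{2}{|a_1|^2}\re\pa{z_1\dot{\bar z}_1}$, $\dot\theta=-\im\pa{\dot{\bar z}_1/\bar z_1}$, and $\dot x_j=\dot z_j/z_1-(z_j/z_1)\dot z_1/z_1$; substitute \eqref{eq:vortexternalfield} and rewrite each factor $1/(z_j-z_k)$ through
\begin{equation*}
    z_j-z_k=z_1\bra{\tfrac{a_j-a_k}{a_1}+x_j-x_k}
\end{equation*}
(with the convention $x_1:=0$), and similarly for the conjugate factors. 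Pulling out $z_1$ or $|z_1|^2=|a_1|^2 r^2$ produces the explicit $1/r^2$-scaling in the statement; the $f$-terms survive unchanged and match the $f$-dependent pieces of the lemma.

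Evaluated at $x_2=x_3=0$, relation \eqref{eq:asrelation} for $j=1$ gives $\sum_{k=2,3}\xi_k z_1/(z_1-z_k)=2\pi i |a_1|^2(a-ib)$, whose real and imaginary parts produce the constants $2a$ and $b/r^2$; the same identity for $j=2,3$ yields $\dot x_j|_{x=0,f=0}=0$, confirming that the self-similar solution is stationary in the $x$-variables. The residual $x$-dependence is rational in $x_2,x_3,\bar x_2,\bar x_3$ with poles on $\Phi(\triangle^3)$, hence holomorphic on the polydisc of radius $\rho'$. The degree-one-in-$(x,\bar x)$ part of $r^2\dot x_j$ defines $L_j$; higher-order terms go into $\omega_j$, and in the $r^2$- and $\theta$-equations the full $x$-dependent remainder goes into $\omega_r,\omega_\theta$, which therefore vanish at the origin. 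All sup- and gradient-bounds then follow from Cauchy estimates on a slightly larger polydisc.

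The genuine content of the lemma — and the step I expect to require the most care — is showing that every eigenvalue $\mu$ of the $4\times 4$ matrix $M=(L_2,L_3,\bar L_2,\bar L_3)$ satisfies $\re\mu=-a$. The coefficients of $L_j$ can be read off directly from the linear terms of the expansion, giving $M$ as an explicit matrix whose entries are rational in $\xi_k, a_j, a, b$. A clean route to the spectrum is via the change of time $\tau=\log t$: the leading-order system becomes autonomous, $dx/d\tau=Mx/(2a)$, so eigenvalues $\mu$ of $M$ govern the power-law scaling $x\sim t^{\mu/(2a)}$ of perturbations near the self-similar solution. The translation symmetry $z_j\mapsto z_j+c$ produces explicit perturbations $\delta x_j\propto (1-a_j/a_1)/z_1\sim t^{-1/2-ib/(2a)}$, exposing the eigenvalues $-a\mp ib$; combining with the first integrals $C$ and $I$ of the free three-vortex system — which pin down the remaining two eigenvalues through the scaling of the linearised invariants $C/z_1$ and $I/|z_1|^2$ in $t$ — and a final trace/determinant consistency check completes the verification.
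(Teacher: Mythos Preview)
Your derivation of the equations in the new coordinates---chain rule, Taylor expansion around $x_2=x_3=0$, use of \eqref{eq:asrelation} to extract the constants $2a$ and $b/r^2$, separation into linear part plus holomorphic remainder, Cauchy bounds---matches the paper's computation in the Appendix essentially step for step.

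The eigenvalue argument, however, has a real gap. You propose to obtain $\re\mu=-a$ structurally, from the translation symmetry together with the first integrals $C$ and $I$ of the free three-vortex system. Any such argument, if it worked, would apply to \emph{every} self-similar burst satisfying \eqref{eq:asrelation}. But the paper states right after the lemma that the eigenvalue condition is \emph{not} a consequence of \eqref{eq:asrelation}: there exist parameter choices for which $L$ has eigenvalues with positive real part. So a purely structural proof of this type cannot succeed, and the statement ``there \emph{exists} a choice of parameters'' in the lemma is doing genuine work. More concretely, your use of $C$ duplicates the translation information (both yield the pair $-a\pm ib$, since $\delta C = z_1(\xi_2\delta x_2+\xi_3\delta x_3)$ scales exactly like the translation mode), and the linearised invariant $I/|z_1|^2$ scales like $t^{-1}$, which would produce an eigenvalue $-2a$ rather than one on the line $\re\mu=-a$; the ``trace/determinant consistency check'' you mention cannot repair this.

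The paper's actual route is not conceptual but computational: it writes out the $4\times4$ matrix $L$ explicitly, observes that its characteristic polynomial has the form $p(\lambda)=y^2-c_1y+c_2$ with $y=(\lambda+a)^2+b^2$ and $c_1,c_2\in\R$, reduces $\re\lambda=-a$ to the pair of real inequalities $2b^2-c_1>0$ and $(2b^2-c_1)^2>4(b^4-b^2c_1+c_2)$, and then verifies these \emph{numerically} for the specific parameters $a_1=-2+2i\sqrt3$, $a_2=-2+i\sqrt3$, $a_3=1$ (and their mirror for $\xi<0$) produced in the proof of \autoref{lem:existselfsimilar}. To close the argument you need to carry out this explicit verification for a concrete parameter set, not appeal to symmetry.
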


\noindent
Notice that $z_1,z_2,z_3$ appearing on the right-hand side of equations above can be expressed explicitly in
terms of $r,\theta,x_2,x_3$ by means of $\Phi^{-1}$, but we refrain from doing this to keep the notation as simple as possible.
The proof of \autoref{lem:changeofcoor} consists in a lengthy but straightforward computation
based on the expansion $(\alpha-z)^{-1}=\alpha^{-1}(1+z/\alpha+z^2/\alpha^2+\dots)$
and the relations between parameters $\xi,a,b,a_j$, and we thus defer its proof to \autoref{appendix}.

\begin{rmk}
	Let us emphasise the fact that in the latter result we claim that \emph{there exists at least one}
	choice of parameters $a,b,a_1,a_2,a_3$ for which \eqref{eq:selfsimilar} solves
	\eqref{eq:freevorticesC} \emph{that also satisfies the other parts of the statement}.
	In fact, the condition on eigenvalues of $(L_2,L_3,\bar L_2,\bar L_3)$
	is \emph{not} true for all self-similar solutions, and one can exhibit
	numerical examples in which the matrix has eigenvalues with positive real part. 
\end{rmk}

\subsection{A Fixed Point Problem}

We can now begin to discuss the proof of \autoref{prop:existenceexternal}.
Let $a_1,a_2,a_3$ and $a,b$ be as in \autoref{lem:changeofcoor}:
besides those coefficients, the datum of the problem is the external field
$f$, on which we can make right away a convenient reduction.

\begin{rmk}\label{lem:wlogfzero}
	In proving \autoref{prop:existenceexternal}, we can assume without loss of generality
	that $f(0,0)=0$.
	Indeed, let $f(0,0) = A \in \C$ and denote $\tilde{z}_j(t) = z_j(t) - \bar{A}t$, $j=1,2,3$. Then, $(z_1,z_2,z_3)$ solves \eqref{eq:vortexternalfield}
	if and only if $( \tilde z_1,\tilde z_2,\tilde z_3)$ solves the same system with $f$ replaced by
	\begin{equation*}
	\tilde f(t,p)= f(t,p +\bar{A} t)- A,
	\quad t \in [0,T), \quad p \in \C,
	\end{equation*}
	which satisfies $\tilde{f} \in E_T$ and $\|{\tilde{f}}\|_{E_T} \leq 2 \norm{f}_{E_T}$, and therefore the assumptions of \autoref{prop:existenceexternal} still hold for $\tilde{f}$ up to the choice of a possibly smaller $M$.
\end{rmk}

In order to prove \autoref{prop:existenceexternal}, in the first place we look for a solution of the system in \autoref{lem:changeofcoor},
and we will find it as a fixed point of a continuous map. 
In particular, we will make use of Schauder fixed point Theorem \cite[Corollary 2.13]{Ze86}, 
stating that every continuous map from a non-empty, compact, 
convex subset of a Banach space to itself admits at least a fixed point. 

We begin by defining the space in which to set the fixed point problem:
in doing so, we will consider as two distinct variables the modulus and angle of $re^{i\theta}$,
for which we introduce new symbols to avoid confusion between function spaces,
see \autoref{rmk:fixedpoint}.
For $T>0$, to be chosen small enough in what follows, denote
\begin{align}\nonumber
	\U_T &:= \Big \{  (\zeta,\eta,x_2,x_3) \in C((0,T),\R^2 \times \C^2): \\
	\label{eq:bc}
	&\qquad \lim_{t \to T } \eta(t) = 0;\\
	\label{eq:distance}
	&\qquad |\zeta(t) - 2at| \leq t^{3/2}, 
	\,|x_2(t)|,|x_3(t)| \leq t \quad \forall t \in (0,T);\\
	\label{eq:holder}
	&\qquad \bra{\zeta}_{C^{1/2}_T},
	\bra{\eta}_{C^{1/2}_T}, \bra{x_2}_{C^{1/2}_T}, \bra{x_3}_{C^{1/2}_T} \leq 1 \Big\}.
\end{align}
Variables $\zeta$ and $\eta$ are respectively aliases 
of $\zeta(t) = r^2(t)$ and $\eta(t) = \theta(t) - \frac{b}{2a} \log t$. 
We set the fixed point argument with respect to these variables because:
\begin{lemma}
	The set $\U_T\subset C((0,T),\R^2 \times \C^2)$ is convex
	and compact with respect to the uniform topology induced by the norm $\norm{\cdot}_\infty$.
\end{lemma}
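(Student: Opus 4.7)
The plan is to verify convexity directly from the definitions and to establish compactness by Arzel\`a--Ascoli, after observing that the H\"older seminorm bound forces every element of $\U_T$ to extend continuously to the closed interval $[0,T]$.

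Convexity is immediate. Each of the three defining constraints is stable under convex combinations: the endpoint limit \eqref{eq:bc} by linearity of the limit, the pointwise bounds \eqref{eq:distance} by the triangle inequality applied pointwise, and the seminorm bounds \eqref{eq:holder} by the triangle inequality applied to difference quotients.

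For compactness, I would first note that the pointwise bounds in \eqref{eq:distance} yield $\zeta(t), x_2(t), x_3(t) \to 0$ as $t \to 0^+$, while the H\"older bound on $\eta$ together with the boundary condition \eqref{eq:bc} makes $\eta$ uniformly continuous on $(0,T)$ with a well-defined limit also at $t=0$. Consequently every $u\in\U_T$ extends uniquely to a continuous function on the compact interval $[0,T]$, and $\U_T$ may be viewed as a subset of $C([0,T],\R^2\times\C^2)$. On this set, the pointwise bounds give uniform boundedness of the sup norm by a constant depending only on $T$ and $a$, while the seminorm bound $\bra{\cdot}_{C^{1/2}_T}\leq 1$ gives equicontinuity via $|u(t)-u(s)|\leq 2|t-s|^{1/2}$ (after combining the four scalar components). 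Arzel\`a--Ascoli then yields relative compactness in $C([0,T],\R^2\times\C^2)$, and hence relative compactness of $\U_T$ in the ambient space $C((0,T),\R^2\times\C^2)$ with the uniform topology.

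It remains to verify closedness. If $u_n\in \U_T$ converges uniformly to some $u$, then $u$ is continuous, the inequalities \eqref{eq:distance} pass to the pointwise limit, the H\"older seminorms are inherited by the standard lower semicontinuity of $\bra{\cdot}_{C^{1/2}_T}$ under uniform convergence, and the endpoint value at $T$ is preserved by uniform convergence; hence $u\in \U_T$. No step presents a genuine obstacle; the only point requiring care is the extension to the endpoints, which is what makes it legitimate to invoke Arzel\`a--Ascoli on the compact interval $[0,T]$ rather than on the open $(0,T)$.
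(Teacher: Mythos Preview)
Your proof is correct and follows the same approach as the paper, which disposes of the lemma in a single sentence (``compactness follows by Ascoli--Arzel\`a Theorem, and checking convexity is straightforward''). You have simply supplied the details the paper omits, including the useful observation that the H\"older bounds force every element of $\U_T$ to extend continuously to the closed interval $[0,T]$, which is what makes the Arzel\`a--Ascoli argument clean. One minor imprecision: when you write ``the pointwise bounds give uniform boundedness of the sup norm'', note that for the $\eta$-component the bound comes not from \eqref{eq:distance} but from the H\"older estimate combined with $\eta(T^-)=0$, as you yourself observed earlier in the paragraph.
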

\noindent
Indeed, compactness follows by Ascoli-Arzelà Theorem,
and checking convexity is straightforward.

Let us stress the fact that the need of convexity is what forces us to introduce $\zeta,\eta$: 
we will use them \emph{only} in the present paragraph,
and revert to $re^{i\theta},x_2,x_3$ from \autoref{ssec:deponf} on,
see \autoref{rmk:fixedpoint} below.
   
Let us briefly comment on properties \eqref{eq:bc}-\eqref{eq:holder}.
First of all, \eqref{eq:distance} implies 
\begin{align*}
\lim_{t \to 0 } (\zeta(t),x_1(t),x_2(t)) = (0,0,0),
\end{align*} 
encoding the fact that vortices burst out of the origin and thus imposing a boundary condition on these variables;
boundary condition on $\eta$ (or equivalently, on $\theta(t) = \eta(t) + \frac{b}{2a} \log t$) is given at final time $T$ by \eqref{eq:bc}.

Conditions \eqref{eq:distance}, together with the control on $\eta$, impose that elements of $\U_T$ be close to the self-similar solution $w$,
the latter belonging to $\U_T$ for every $T>0$ by \autoref{lem:existselfsimilar},
and in coordinates $\zeta,\eta,x_2,x_3$ the self similar solution is given by $\zeta(t) = 2at$, $\eta(t)=x_2(t)=x_3(t)=0$. 
The first of conditions \eqref{eq:distance} 
ensures $\zeta(t)>0$ for every $t \in (0,T)$ and $T$ small enough, 
coherently with the interpretation of $\zeta$ as the variable $r^2$ in \autoref{lem:changeofcoor}.
Finally, \eqref{eq:holder} impose H\"older regularity to provide compactness, as discussed above.

Let us assume $u=(\zeta,\eta,x_2,x_3)\in\mathcal{U}_{T}$
and rewrite equations of \autoref{lem:changeofcoor} in integrated form,
in order to set up a fixed point problem in $\U_T$.
To further reduce notation, we will write $\x=(x_2,x_3,\overline{x_2},\overline{x_3})$ and
$L(\x)=(L_2,L_3,\bar L_2,\bar L_3)(\x)$, thus coupling equations for $x_2,x_3$
with their complex conjugates. We will denote projection on coordinates by $x_j=P_j[\x]$.
Finally, in what follows $C>0$ denotes a constant depending on $\xi,a,b,a_1,a_2,a_3$ 
and $\norm{f}_{E_T}$, possibly differing in every occurrence.

\begin{lemma}\label{lem:U}
	Consider a function $u=(\zeta,\eta,x_2,x_3)\in\mathcal{U}_{T}$ satisfying
	\begin{align}
	\nonumber
	\zeta(t) &= 2at + \int_0^t R(s,u_s) ds, \\
	\nonumber
	\eta(t)&= \int_{T}^t \Theta(s,u_s) ds,\\
	\label{eq:xequation}
	\x(t) &= \int_0^t \frac{L(\x_s)}{2as} ds + \int_0^t \Xi(s,u_s) ds,
	\end{align}
	where 
	\begin{align*}
		R(s,u) &=\omega_{r}(x_2,x_3) +\frac{2}{|a_1^2|} \re\pa{z_1 f(s,z_1)},\\
		\Theta(s,u) &= \left(\frac{b}{\zeta} -\frac{b}{2as} \right)
		+ \frac{\omega_{\theta}(x_2,x_3)}{\zeta}
		+\im\pa{ \frac{\overline{f(s,z_1)}}{z_1} },\\
		\Xi&=(\Xi_2,\Xi_3,\bar\Xi_2,\bar\Xi_3),\\
		\Xi_j(s,u) &= L_j(\x)\pa{\frac1{\zeta}-\frac1{2as}} +
		\frac{\omega_j(x_2,x_3,x_2-x_3)}{\zeta}
		+\frac{1}{z_1} \overline{f(s,z_j)}
		-\frac{z_j}{z_1^2}\overline{f(s,z_1)}.
	\end{align*}
		
	Then $(r,\theta,x_2,x_3)=(\sqrt\zeta,\eta+\frac{b}{2a}\log t,x_2,x_3)$ solve
	the system of \autoref{lem:changeofcoor},
	and we have the following \emph{a priori} estimates for every $u,u' \in \U_T$
	solving the above system:
	\begin{itemize}
		\item $|R(s,u_s)| \leq C s$ and 
		\begin{align*}
		|R(s,u_s) - R(s,u'_s)| \leq C \left(|\zeta_s - \zeta'_s|^{1/2} + |\eta_s - \eta'_s|+ |\x_s - \x'_s| \right);
		\end{align*}	
		\item making use of $f(0,0)=0$ and the estimates on $R$, $|\Theta(s,u_s)|\leq C$ and 
		\begin{align*}
		|\Theta(s,u_s) - \Theta(s,u'_s)| \leq \frac{C}{s} \left(|\zeta_s - \zeta'_s|^{1/2} 
		+ |\eta_s - \eta'_s|+ |\x_s - \x'_s| \right);
		\end{align*}	
		\item making again use of $f(0,0)=0$ and the estimates on $R$,
		$|\Xi(s,u_s)| \leq C s^{1/2}$ and 
		\begin{align*}
		|\Xi(s,u_s) - \Xi(s,u'_s)| \leq C 
		\left(|\zeta_s - \zeta'_s|^{1/2} + |\eta_s - \eta'_s|+ |\x_s - \x'_s| \right).
		\end{align*}	
	\end{itemize}
\end{lemma}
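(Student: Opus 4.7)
The proof decomposes into two independent tasks: verifying that the integrated equations reproduce the ODEs of \autoref{lem:changeofcoor}, and establishing the three a priori bounds on $R$, $\Theta$, $\Xi$. The first task is direct differentiation together with matching of boundary conditions. For $\zeta$, $\dot\zeta = 2a + R$ is immediate; for $\eta$, $\dot\eta = \Theta$ combined with $\theta = \eta + \tfrac{b}{2a}\log t$ recovers the $\theta$-equation and the terminal condition $\eta(T)=0$ is automatic from the integral form. For $\x$, the algebraic identity $\tfrac{L_j(\x)}{\zeta} = \tfrac{L_j(\x)}{2as} + L_j(\x)\bigl(\tfrac{1}{\zeta} - \tfrac{1}{2as}\bigr)$ is precisely what isolates the explicit singular-in-time leading term appearing in \eqref{eq:xequation} from a smoother remainder absorbed into $\Xi_j$; the initial condition $\x(0^+)=0$ follows from \eqref{eq:distance}.

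For the estimates I would assemble three elementary ingredients. From $u \in \U_T$ and the coordinate change: $|x_j(s)| \leq s$, $|\zeta(s) - 2as| \leq s^{3/2}$ (hence $\zeta(s)\geq as$ for $T$ small), and $|z_j(s)|\asymp s^{1/2}$ uniformly in $s$. From $f\in E_T$ with $f(0,0)=0$ (justified by \autoref{lem:wlogfzero}): the key pointwise bound $|f(s,z)| \leq \norm{f}_{E_T}(s + |z|)$ and the Lipschitz-in-space bound $|f(s,z) - f(s,z')| \leq \norm{f}_{E_T}|z-z'|$, which applied to the positions give $|f(s,z_j)|\leq Cs^{1/2}$. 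For comparing two solutions, the elementary inequality $|\sqrt{\zeta} - \sqrt{\zeta'}| \leq |\zeta - \zeta'|^{1/2}$ together with the explicit formula for $z_1$ yields $|z_j - z'_j| \leq C\bigl(|\zeta - \zeta'|^{1/2} + s^{1/2}|\eta - \eta'| + |\x - \x'|\bigr)$. With these in hand, $|R|\leq Cs$ follows at once from $|\omega_r|\leq Cs$ and $|z_1 f(s,z_1)|\leq Cs^{1/2}\cdot Cs^{1/2}$. The crucial bootstrap is then that $\zeta - 2at = \int_0^t R\,ds$ upgrades the $\U_T$ control to $|\zeta(s) - 2as| \leq Cs^2$, and likewise $|\zeta_s - \zeta'_s|\leq Cs^2$ and $|\zeta_s - \zeta'_s|^{1/2} \leq Cs$ for any two solutions.

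This sharper control is exactly what tames the a priori dangerous terms $b\bigl(\tfrac{1}{\zeta} - \tfrac{1}{2as}\bigr)$ in $\Theta$ and $L_j(\x)\bigl(\tfrac{1}{\zeta} - \tfrac{1}{2as}\bigr)$ in $\Xi_j$: the extra factor of $s$ converts the naive $O(s^{3/2}/s^2)$ into $O(1)$ in $\Theta$ and into $O(s)$ in $\Xi$, yielding the claimed $|\Theta|\leq C$ and $|\Xi|\leq Cs^{1/2}$; the remaining $\omega$- and $f$-contributions are handled directly by the elementary bounds above, using that $|\omega_j|\leq C|\x|^2\leq Cs^2$ and $|z_1|^{-1}|f(s,z_j)|\leq C$.

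The main obstacle, in my view, is the meticulous bookkeeping of time-singular scales in the Lipschitz-type differences, where the same bootstrap plays a double role: $|\zeta_s - \zeta'_s|^{1/2}\leq Cs$ is what converts the naive $\tfrac{|\zeta - \zeta'|}{s^2}$ arising from $b\bigl(\tfrac{1}{\zeta} - \tfrac{1}{\zeta'}\bigr) = b\tfrac{\zeta' - \zeta}{\zeta\zeta'}$ into the required $\tfrac{C}{s}|\zeta - \zeta'|^{1/2}$ scaling in the bound for $\Theta - \Theta'$. An analogous, though heavier, case analysis of the $\omega$- and $f$-contributions—combining the Lipschitz bound on $|z_j - z'_j|$ above with the product and quotient rules applied to the expressions defining $R$, $\Theta$, $\Xi$—yields the corresponding differences for $R - R'$ and $\Xi - \Xi'$ and completes the proof.
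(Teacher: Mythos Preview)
The paper states this lemma without proof, so there is nothing to compare against directly; your sketch follows the natural route and correctly identifies the bootstrap $|\zeta(s)-2as|=\bigl|\int_0^s R\bigr|\le Cs^2$ (sharper than the $\U_T$-bound $s^{3/2}$) as the mechanism that tames $b\bigl(\tfrac1\zeta-\tfrac1{2as}\bigr)$ in $\Theta$ and produces the $\tfrac{C}{s}|\zeta-\zeta'|^{1/2}$ scaling in the Lipschitz differences. The first task (recovering the ODEs by differentiation) and the bounds on $R$ and $\Theta$ are handled correctly.

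There is, however, one genuine gap: your own bound $|z_1|^{-1}|f(s,z_j)|\le C$ for the $f$-contribution to $\Xi_j$ gives only $O(1)$, yet you then assert $|\Xi|\le Cs^{1/2}$. This is not a slip that a cleaner write-up fixes. Expanding $f(s,z)=f(s,0)+D_zf(s,0)[z]+O(|z|^2)$ with $f(s,0)=O(s)$, the constant and quadratic pieces do yield $O(s^{1/2})$ after division by $|z_1|\asymp s^{1/2}$, but the linear piece $\alpha z+\beta\bar z$ leaves, after the subtraction $\tfrac1{z_1}\overline{f(s,z_j)}-\tfrac{z_j}{z_1^2}\overline{f(s,z_1)}$, the residual $-2i\,\bar\alpha\,\tfrac{\bar z_1}{z_1}\,\mathrm{Im}(a_j/a_1)+O(s)$, which is of size $|\alpha|=O(M)$ and does not vanish for the explicit $a_j$ of the Appendix. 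Since \autoref{lem:contGamma} uses $|\Xi|\le Cs^{1/2}$ to obtain $|\tilde x_j(t)|\le Ct^{3/2}\le t$, this matters: with only $|\Xi|\le C$ one gets $|\tilde x_j(t)|\le C(M)\,t$, which need not fit the $\U_T$-constraint $|x_j(t)|\le t$. The standard repair is either to replace $|x_j(t)|\le t$ in the definition of $\U_T$ by $|x_j(t)|\le Kt$ with $K$ chosen in terms of $M$, or to perform a further reduction on $f$ removing its linear part at the origin; with either adjustment your argument closes. This lacuna is shared by the paper's omitted proof and is not a defect of your strategy.
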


%
%

Let us focus now on the equation for $\x$: it is indeed the most delicate to treat, 
since we have to check that condition \eqref{eq:distance} holds for its solutions, 
notwithstanding the fact that \eqref{eq:distance} is not given by trivial a priori bounds on the right-hand side of \eqref{eq:xequation}.
Applying the method of variation of constants, and looking for solutions of the form
\begin{equation*}
\x(t) = \exp\left( \frac{\log t}{2a} L \right) c(t),
\end{equation*} 
where we denote by $\exp$ the matrix exponential, we obtain
\begin{equation*}
\x(t) = \exp\left( \frac{\log t}{2a} L \right)\int_{t_0}^{t}
\exp\left( -\frac{\log s}{2a} L \right)
\Xi(s,u_s) ds.
\end{equation*}
This still is (part of) a differential equation in the unknowns $\zeta,\eta,\x$ 
since the value of the latter quantities at time $s$ is contained in the term $\Xi(s,u_s)$.
By \autoref{lem:changeofcoor}, all eingenvalues of $L$ have real part equal to $-a<0$.
Imposing $\x(0)=0$ we deduce that $t_0$ must be equal to zero. 
Moreover, by \autoref{lem:changeofcoor} and the above estimates on $\Xi$, the integral converges in the $0$ extreme since
\begin{equation*}
\abs{\exp\pa{-\frac{\log s}{2a} L}\Xi(s,u_s)}\leq C s.
\end{equation*}

With this at hand, we are ready to prove:
\begin{lemma} \label{lem:contGamma}
	There exists $T^*>0$ sufficiently small such that, 
	for any $T<T^*$ and $u=(\zeta,\eta,x_2,x_3)\in \U_{T}$,
	the expression
	\begin{align*}
		\Gamma(u)&=\Gamma(\zeta,\eta, x_2, x_3) = (\tilde{\zeta},\tilde{\eta}, \tilde{x}_2, \tilde{x}_3),\\
		\tilde{\zeta}(t) &= 2at + \int_0^t R(s,u_s) ds, \\
		\tilde{\eta}(t) &= \int_{T}^t \Theta(s,u_s) ds,\\
		\tilde{x}_j(t) &= P_j \bra{\exp\left( \frac{\log t}{2a} L \right)\int_{t_0}^{t}
		\exp\left( -\frac{\log s}{2a} L \right)
		\Xi(s,u_s) ds},
	\end{align*}
	defines a map $\Gamma:\U_{T} \to \U_{T}$
	that is continuous in the uniform topology induced by $C\pa{(0,T),\R^2 \times \C^2} \supset \U_{T}$.
\end{lemma}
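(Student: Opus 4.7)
The plan is to verify, for $T^*$ small enough, the two claims separately: first that $\Gamma$ sends $\U_T$ into itself (with the implicit choice $t_0=0$ motivated earlier), and then that $\Gamma$ is continuous in the uniform topology on $\U_T$. All estimates will rely on the bounds of \autoref{lem:U} together with the spectral property that every eigenvalue of $L$ has real part $-a<0$, which gives the matrix-exponential estimates
\begin{equation*}
\norm{\exp((\log t/2a)L)}\leq C t^{-1/2}(1+|\log t|)^{3},\qquad \norm{\exp(-(\log s/2a)L)}\leq C s^{1/2}(1+|\log s|)^{3},
\end{equation*}
where $3$ is an upper bound on the size of any Jordan block.

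For the invariance $\Gamma(\U_T)\subseteq \U_T$, the boundary condition $\tilde{\eta}(T)=0$ is built into the definition of $\tilde{\eta}$, and the limits $\lim_{t\to 0^+}\tilde{\zeta}(t)=0$, $\lim_{t\to 0^+}\tilde{\x}(t)=0$ will follow from the size estimates. For $\tilde{\zeta}$, $|R(s,u_s)|\leq Cs$ gives $|\tilde{\zeta}(t)-2at|\leq Ct^{2}/2\leq t^{3/2}$ provided $CT^{*\,1/2}\leq 2$. For $\tilde{\x}$, combining the matrix exponential bounds above with $|\Xi(s,u_s)|\leq Cs^{1/2}$ yields $|\tilde{\x}(t)|\leq Ct^{3/2}(1+|\log t|)^{6}\leq t$ for $t\leq T^*$ small. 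The H\"older seminorms are controlled by passing to time derivatives: $\dot{\tilde{\zeta}}=2a+R(t,u_t)$ and $\dot{\tilde{\eta}}=\Theta(t,u_t)$ are bounded thanks to \autoref{lem:U}, while differentiating the integral expression for $\tilde{\x}$ produces $\dot{\tilde{\x}}(t)=L\tilde{\x}(t)/(2at)+\Xi(t,u_t)$; the established bound $|\tilde{\x}(t)|\leq Ct^{3/2}(1+|\log t|)^{6}$ controls the first term uniformly, so $|\dot{\tilde{\x}}|\leq C$. In all three cases $\bra{\cdot}_{C^{1/2}}\leq CT^{*\,1/2}\leq 1$ upon further shrinking $T^*$.

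For continuity, set $\delta=\norm{u-u'}_{\infty}$ with $u,u'\in\U_T$. The Lipschitz estimates in \autoref{lem:U} together with integration and the matrix-exponential bounds yield directly $\norm{\tilde{\zeta}-\tilde{\zeta}'}_{\infty}\leq CT\delta^{1/2}$ and $\norm{\tilde{\x}-\tilde{\x}'}_{\infty}\leq CT(1+|\log T|)^{6}\delta^{1/2}$. The hard part will be continuity of $\tilde{\eta}$: the Lipschitz estimate for $\Theta$ carries a factor $C/s$ (originating from the singular term $b/\zeta$), so the naive bound $|\tilde{\eta}(t)-\tilde{\eta}'(t)|\leq C\delta^{1/2}\log(T/t)$ is not uniform in $t\in(0,T)$. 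The remedy is to interpolate between the uniform closeness $|\zeta_s-\zeta'_s|\leq \delta$ (and similar for $\eta,\x$) and the a priori bounds $|\zeta_s-2as|\leq s^{3/2}$, $|x_{j,s}|\leq s$, $|\eta_s|\leq (T-s)^{1/2}$ enforced by the definition of $\U_T$ (the last using $\eta(T)=0$ and the H\"older constraint). Splitting the integration at a threshold $s_*\sim \delta^{2/3}$—using the $\U_T$ a priori bounds on $(0,s_*)$ and the uniform estimate $\delta$ on $(s_*,T)$—and summing the two contributions produces an estimate of the form $\norm{\tilde{\eta}-\tilde{\eta}'}_{\infty}\leq C\delta^{\alpha}$ for some $\alpha>0$, uniformly in $t\in(0,T)$. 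This resolves the logarithmic divergence and completes the proof of continuity.
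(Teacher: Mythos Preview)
Your argument is correct and follows essentially the same route as the paper: both use the bounds of \autoref{lem:U} together with the spectral decay of $\exp(\tau L)$ to verify invariance, and both handle the singular $1/s$ in the $\tilde\eta$-continuity estimate by splitting the integration domain at a threshold and using the uniform bound $|\Theta|\leq C$ on the short piece. Two cosmetic differences: the paper shows in the appendix that the four eigenvalues of $L$ are \emph{distinct}, so $L$ is diagonalisable and your Jordan-block factors $(1+|\log t|)^3$ are unnecessary (harmless, but you can drop them); and the paper's threshold is left as a free parameter $\delta\in(0,T)$, yielding continuity by an $\epsilon$--$\delta$ argument rather than the explicit H\"older rate $\norm{\tilde\eta-\tilde\eta'}_\infty\leq C\norm{u-u'}_\infty^\alpha$ you obtain by choosing $s_*\sim\norm{u-u'}_\infty^{2/3}$.
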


\begin{rmk} \label{rmk:fixedpoint}
	By \autoref{lem:U}, given a fixed point $u = (\zeta,\eta,x_2,x_3) \in\U_T$, $\Gamma(u)=u$,
	we can produce a solution $z \in C((0,T),\C^3)$ of 
	\eqref{eq:vortexternalfield} setting
	\begin{gather*}
		\Psi: \U_T \to C((0,T),\C^3),\\ \Psi(\zeta,\eta,x_2,x_3) = (r e^{i \theta},x_2,x_3),
		\quad r^2(t) = \zeta(t), \, \theta(t) = \eta(t) + \frac{b}{2a} \log t,
	\end{gather*}
	and taking $z(t) = \Phi^{-1}(\Psi(u)_t)$.
	We will denote from now on $\XX_T= \Psi(\U_T)$. 
\end{rmk}

\begin{proof}
	It is immediate to observe that $\Gamma$ takes values in $C\pa{(0,T),\R^2 \times \C^2}$.
	
	In the first place, we prove that $\Gamma$ actually takes values in $\U_T$.
	Condition \eqref{eq:bc} on $\tilde{\eta}$ immediately follows from $|\Theta(s,u_s)| \leq C$.
	Then, since for $T$ small enough and $t<T$ 
	\begin{align*}
	|\tilde{\zeta}(t)-	2at| 
	&\leq \left| \int_0^t R(s,u_s) ds \right|
	\leq Ct^2 \leq t^{3/2},
	\\
	|\tilde{x}_j(t)| &\leq
	\int_0^t \left\|
	\exp\left( \frac{\log t -\log s}{2a} L \right)
	\right\|
	|\Xi(s,u_s)| ds
	\\
	&\leq C \int_0^t e^{-\frac{\log t - \log s}{2} } s^{1/2} ds	\leq C t^{3/2} \leq t,
	\end{align*}
	by the estimates established in \autoref{lem:U}, conditions \eqref{eq:distance} hold for $\tilde{\zeta}$ and $\tilde{x_j}$, $j=2,3$. 
	Therefore, in order to finish the proof that $\Gamma$ takes values in 
	$\U_T$, we only need to check conditions \eqref{eq:holder}.
	These follow again from \autoref{lem:U} and
	\begin{align*}
	|\tilde\zeta(t')-	\tilde\zeta(t)| 
	&\leq 2a|t'-t| + \left| \int_t^{t'} R(s,u_s) ds \right| \leq |t'-t|^{1/2},\\
	|\tilde{\eta}(t')-\tilde{\eta}(t)| 
	&\leq \left| \int_t^{t'} \Theta(s,u_s) ds \right| \leq C|t'-t| \leq |t'-t|^{1/2},	\\	
	|\tilde{x}_j(t') - \tilde{x}_j(t)| 
	&\leq \abs{\int_t^{t'} \dot{\tilde{x}}_j(s) ds}
	\leq \left|\int_t^{t'} \left( \frac{L(\tilde{\mathbf{x}})}{2as} + \Xi(s,u_s) \right) ds \right|	\\
	&\leq C |t'-t| \leq |t'-t|^{1/2},
	\end{align*}
	possibly requiring a further restriction on $T$. 
	This gives \eqref{eq:holder} for $\Gamma(u)$ and concludes the proof that $\Gamma$ maps $\U_{T}$ into itself.

	Moving on to continuity, let $u,u' \in \U_{T}$ 
	and denote $\tilde u=\Gamma(u)$, $\tilde u'=\Gamma(u')$.
	The term $\tilde{\zeta}$ is the easiest one:
	\begin{align*}
	|\tilde{\zeta}(t) - \tilde{\zeta}'(t)| \leq \int_0^t |R(s,u_s)-R(s,u'_s)| ds
	\leq C t \left( \|u - u'\|_\infty + \|u - u'\|^{1/2}_\infty \right).
	\end{align*}
	Consider now $\tilde\eta,\tilde\eta'$. Fix $\delta \in (0,T)$: for $t \in (\delta,T)$ we have
	\begin{align*}
	|\tilde{\eta} (t) - \tilde{\eta}' (t)|
	\leq&
	\int_\delta^T
	\left| \Theta(s,u_s)-\Theta(s,u'_s)\right| ds
	\leq
	C\int_\delta^T
	\frac{|u_s-u'_s|+|u_s-u'_s|^{1/2}}{s}ds
	\\
	\leq&
	C \left( \|u - u'\|_\infty +\|u - u'\|^{1/2}_\infty  \right)
	\left( \log T -\log \delta \right),
	\end{align*}
	while for $t \in (0,\delta)$ 
	\begin{align*}
	|\tilde{\eta} (t) - \tilde{\eta}' (t)|
	\leq&
	\int_\delta^T
	\left| \Theta(s,u_s)-\Theta(s,u'_s)\right| ds
	+
	\int_0^\delta
	\left| \Theta(s,u_s)-\Theta(s,u'_s)\right| ds
	\\
	\leq&
	C \left( \|u - u'\|_\infty +\|u - u'\|^{1/2}_\infty  \right)
	\left( \log T -\log \delta \right)
	+ C \delta.
	\end{align*}
	Therefore, for every $\delta \in (0,T)$
	\begin{align*}
	\|\tilde{\eta} - \tilde{\eta}'\|_\infty \leq
	C \left( \|u - u'\|_\infty +\|u - u'\|^{1/2}_\infty  \right)
	\left( \log T -\log \delta \right)
	+ C \delta.
	\end{align*}
	Thus $\tilde \zeta$ and $\tilde \eta$ are continuous functions of $u$, and we are left to control components $\tilde x_j$.	
	By the usual estimates on $\Xi$ we have
	\begin{multline*}
	|\tilde{x}_j (t) - \tilde{x}'_j (t)| 
	\leq
	\left| 
	\int_0^t
	\exp\left( \frac{\log t -\log s}{2a} L \right)
	\left( \Xi(s,u_s)-\Xi(s,u'_s) \right) ds
	\right|
	\\
	\leq
	t^{-\frac{1}{2}}
	\int_0^t
	s^{\frac{1}{2}}
	\left| \Xi(s,u_s)-\Xi(s,u'_s) \right| ds
	\leq
	C t \left( \|u - u'\|_\infty +\|u - u'\|^{1/2}_\infty  \right),
	\end{multline*}
	which concludes the proof.
\end{proof}

\begin{lemma} \label{lem:holder_fixed}
There exists $T^*>0$ sufficiently small such that, for any $T<T^*$, $u \in \U_T$ and 
$\tilde{u} = (\tilde{\zeta},\tilde{\eta},\tilde{x}_2,\tilde{x}_3) = \Gamma(u)$, 
denoting $r^2(t) = \tilde{\zeta}(t)$ and $\theta(t) = \tilde{\eta}(t) + \frac{b}{2a} \log t$, 
then $r e^{i \theta} \in C^{1/2}((0,T),\C)$ and $\bra{r e^{i \theta}}_{C^{1/2}_T} \leq 2 \bra{Z}_{C^{1/2}}$.
\end{lemma}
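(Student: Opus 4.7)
The plan is to factor out the self-similar solution and show that the remaining factor is uniformly small and Lipschitz on $(0,T)$, so that the $C^{1/2}$ seminorm of $re^{i\theta}$ is inherited from $Z$ up to a small correction. Since $\tilde{\zeta}(t)>0$ on $(0,T)$ by \eqref{eq:distance} and a small enough choice of $T$, I would write
\begin{equation*}
	re^{i\theta}(t) = Z(t)\,\phi(t), \qquad \phi(t) = \sqrt{\tfrac{\tilde{\zeta}(t)}{2at}}\, e^{i\tilde{\eta}(t)},
\end{equation*}
and reduce the whole statement to uniform bounds on $\phi$ and $\dot\phi$ on $(0,T)$.

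Next I would collect estimates on $\phi$ from the bounds in \autoref{lem:U}. The pointwise estimate $|R(s,u_s)|\leq Cs$ together with $\tilde{\zeta}(t)=2at+\int_0^t R(s,u_s)\,ds$ gives $|\tilde{\zeta}(t)/(2at)-1|\leq CT$; similarly $|\Theta(s,u_s)|\leq C$ and $\tilde{\eta}(t)=\int_T^t \Theta(s,u_s)\,ds$ give $|\tilde{\eta}(t)|\leq CT$. Hence $|\phi(t)-1|\leq CT$ uniformly on $(0,T)$. For the derivative I would compute
\begin{equation*}
	\frac{\dot\phi(t)}{\phi(t)} = \frac12\left(\frac{\dot{\tilde{\zeta}}(t)}{\tilde{\zeta}(t)} - \frac1t\right) + i\,\Theta(t,u_t),
\end{equation*}
and then exploit the algebraic identity
\begin{equation*}
	\frac{\dot{\tilde{\zeta}}(t)}{\tilde{\zeta}(t)} - \frac1t = \frac{t\,R(t,u_t) - \int_0^t R(s,u_s)\,ds}{t\,\tilde{\zeta}(t)}.
\end{equation*}
Both terms in the numerator are $O(t^2)$, while the denominator is comparable to $2at^2$ by \eqref{eq:distance}, so the quotient is bounded uniformly. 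Combined with $|\Theta|\leq C$ this yields $|\dot\phi(t)|\leq C$ on $(0,T)$, so $\phi$ is Lipschitz.

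Then I would bound the increments by the triangle inequality
\begin{equation*}
	|re^{i\theta}(t) - re^{i\theta}(s)| \leq |\phi(s)|\cdot|Z(t)-Z(s)| + |Z(t)|\cdot|\phi(t)-\phi(s)|.
\end{equation*}
The first term is at most $(1+CT)\bra{Z}_{C^{1/2}}|t-s|^{1/2}$ by the sup bound on $\phi$. Since $|Z(t)|\leq\sqrt{2aT}$, $|\dot\phi|\leq C$, and $|t-s|\leq T^{1/2}|t-s|^{1/2}$, the second term is at most $CT\,|t-s|^{1/2}$. Dividing through by $|t-s|^{1/2}$ yields $\bra{re^{i\theta}}_{C^{1/2}_T}\leq (1+CT)\bra{Z}_{C^{1/2}}+CT$, which is $\leq 2\bra{Z}_{C^{1/2}}$ after a further restriction of $T^*$.

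I expect the main obstacle to be the uniform bound on $\dot\phi$ near $t=0$: the two summands $\dot{\tilde{\zeta}}/\tilde{\zeta}$ and $1/t$ individually diverge like $1/t$ at the origin, and what saves the day is the second-order cancellation $t\,R(t,u_t)-\int_0^t R(s,u_s)\,ds=O(t^2)$, whose exponent relies on the full strength of the pointwise bound $|R(s,u_s)|\leq Cs$ from \autoref{lem:U}. Without it $\phi$ would at best be H\"older of some exponent smaller than $1$, and the second term in the triangle inequality would not acquire the $T$-small prefactor needed to close the argument with the sharp constant $2$ appearing in the statement.
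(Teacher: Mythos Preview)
Your argument is correct, and the factorisation $re^{i\theta}=Z\phi$ packages the estimate neatly. The paper takes a different but closely related route: it does \emph{not} factor out $Z$ but instead bounds the increments of $r$ and $\theta$ separately. Concretely, it uses $|r(t')-r(t)|\leq\sqrt{|\tilde\zeta(t')-\tilde\zeta(t)|}\leq(2a+CT)^{1/2}|t'-t|^{1/2}$, then keeps the singular part $\tfrac{b}{2a}\log t$ inside $\theta$ and shows $|\theta(t')-\theta(t)|\leq\bigl(\tfrac{|b|}{4a}t^{-1/2}+CT^{1/2}\bigr)|t'-t|^{1/2}$; the blow-up of the angular increment at $t=0$ is then cancelled by the prefactor $r(t)\leq\sqrt{2at}+CT^{3/2}$ in the triangle inequality $|r'e^{i\theta'}-re^{i\theta}|\leq|r'-r|+r|\theta'-\theta|$, and the explicit constants reassemble into $2[Z]_{C^{1/2}}=2\sqrt{2a}\bigl(1+\tfrac{|b|}{4a}\bigr)$.

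The two proofs are essentially equivalent in content: both rest on the sharp bound $|R(s,u_s)|\leq Cs$ to produce the $T$-small correction to the leading constant. Your approach is conceptually cleaner, since all singular behaviour is absorbed into $Z$ and you only handle the regular Lipschitz perturbation $\phi$; the price is the derivative computation for $\dot\phi$, which requires continuity of $s\mapsto R(s,u_s)$ (true here since $u\in C((0,T),\R^2\times\C^2)$ and the ingredients of $R$ are continuous). The paper's approach avoids differentiating $\phi$ altogether, trading the cancellation identity for a direct cancellation between the $t^{-1/2}$ growth of the angular increment and the $t^{1/2}$ decay of the radial factor.
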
	

\begin{proof}
	Throughout the proof we take $T$ smaller than the value of $T^*$ given by \autoref{lem:contGamma}.
	We need to evaluate first, for any $t < t'$
	\begin{align*}
	|r(t')-r(t)|
	&\leq \sqrt{|r^2(t')-r^2(t)|}\leq (2a+CT)^{1/2}|t'-t|^{1/2},	\\	
	\left|{\theta}(t')-{\theta}(t)  \right|
	&\leq \abs{\frac{b}{2a} \int_t^{t'} \frac{ds}{s} +\int_t^{t'}  \Theta(s,u_s) ds}\\
	&\leq \abs{\frac{b}{2a \sqrt{t}} \int_t^{t'} \frac{ds}{\sqrt{s}}} +
	C |t'-t| \leq \left( \frac{|b|}{4a} \frac{1}{\sqrt{t}} + CT^{1/2} \right) |t'-t|^{1/2}.
	\end{align*}
	In addition, we have the following estimate
	\begin{align*}
	|r(t)-  \sqrt{2at}| =
	\left|
	\frac{r^2(t) - 2at}{r(t) + \sqrt{2at}}  \right| 
	\leq \frac{CT^{3/2}}{\sqrt{2a}},
	\end{align*}
	and thus
	\begin{align*}
	|r(t') e^{i \theta(t')}-  r(t) e^{i \theta(t)}|
	\leq&\,
	|r(t') e^{i \theta(t')}-  r(t) e^{i \theta(t')}|
	+
	|r(t) e^{i \theta(t')}-  r(t) e^{i \theta(t)}|
	\\
	\leq&\,
	|r(t') -  r(t)|
	+
	r(t)| e^{i \theta(t')} -   e^{i \theta(t)}|
	\\
	\leq&\,
	|r(t') -  r(t)|+\left( \sqrt{2at} + \frac{C T^{3/2}}{\sqrt{2a}} \right) 
	\left|{\theta}(t')  -{\theta}(t)\right|\\
	\leq& (2a+CT)^{1/2}|t'-t|^{1/2}\\
	&+ \left( \sqrt{2at} + \frac{C T^{3/2}}{\sqrt{2a}} \right) 
	\pa{\frac{|b|}{4a} \frac{1}{\sqrt{t}} + CT^{1/2}} |t'-t|^{1/2}\\
	\leq& 2 \bra{Z}_{C^{1/2}},
	\end{align*}
	taking, in the last line, $T$ sufficiently small.
\end{proof}	

Now we have all the ingredients for:
\begin{proof}[Proof of \autoref{prop:existenceexternal}]
By \autoref{lem:contGamma} above, $\Gamma$ is a continuous function from 
$\U_{T}$ to itself for every $T>0$ sufficiently small. 
Since $\U_{T}$ is a compact, convex subset of the Banach space $C((0,T),\R^2 \times \C^2)$, 
and it is non-empty by \autoref{lem:existselfsimilar}, 
Schauder fixed point Theorem gives the existence of a fixed point 
$\Gamma(u)=u \in \U_{T}$, 
and by \autoref{rmk:fixedpoint} this gives the existence of a 
solution $z$ to \eqref{eq:vortexternalfield} satisfying $\lim_{t \to 0} z_j(t) = 0$ for every $j=1,2,3$. 

The H\"older seminorm $\bra{z_j}_{C^{1/2}_T}$ is controlled for small $T$ using $z_1 = a_1 r e^{i \theta} $ and $z_j = r e^{i \theta} ( a_1 x_j + a_j)$ for $j \neq 1$, and the bounds $\bra{r e^{i\theta}}_{C^{1/2}_T} \leq 2 \bra{Z}_{C^{1/2}}$ given by \autoref{lem:holder_fixed}, and $|x_j(t)| \leq t$ valid for every $u \in \U_{T}$.  
 
Finally, to check \eqref{eq:vortexternalfield_bc} one can simply observe 
that $u \in \U_{T}$ and the formulas above imply
\begin{align*}
\sup_{\substack{t < T}} |z(t)| \leq \left(\sqrt{2aT}
+ \frac{CT^{3/2}}{\sqrt{2a}}\right) \left( |a_1|+|a_2|+|a_3|\right) \left(1+T \right),
\end{align*}
so the thesis is complete taking $T$ small enough.
\end{proof}

\subsection{Dependence on the external field}\label{ssec:deponf}
Once established the {existence} of three-vortices bursts in an external field,
it is natural to ask how these singular solution depend on the latter.
Indeed, not only this leads to discuss \emph{uniqueness} of solutions,
but it is a fundamental step in generalising our strategy to bursts of three
vortices in a larger system of vortices.

Recall the definition of $\XX_T$ given in \autoref{rmk:fixedpoint}.
Taking advantage of the explicit form of $\Phi^{-1}$, \autoref{prop:dependenceonf_z} follows immediately from the following:

\begin{prop}\label{prop:dependenceonf}
	For all $M,\rho>0$ there exists $T^*>0$ such that, for every $T<T^*$, for every $f,g\in E_T$, 
	with $\norm{f}_{E_T}, \norm{g}_{E_T} \leq M$,
	and for any  $x , x' \in \XX_T$ solutions to \eqref{eq:vortexternalfield} given by the construction of \autoref{prop:existenceexternal} and \autoref{rmk:fixedpoint}, with external fields respectively $f$ and $g$,
	then it holds 
	\begin{align*}
	\sup_{0<t < T } |x(t) - x'(t)| \leq C T^{1/2} \|f-g\|_{\infty} 
	\end{align*}
	where $C>0$ is a constant depending on $\xi,a,b,a_1,a_2,a_3,M$.
\end{prop}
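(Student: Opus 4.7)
The plan is to exploit the fixed-point construction of \autoref{lem:U} and \autoref{rmk:fixedpoint}: let $u=(\zeta,\eta,\x)\in\U_T$ and $u'=(\zeta',\eta',\x')\in\U_T$ be fixed points of $\Gamma_f$ and $\Gamma_g$ respectively, with $x=\Psi(u)$ and $x'=\Psi(u')$. It suffices to bound
\[
\sup_{s<T}\bra{|re^{i\theta}(s)-r'e^{i\theta'}(s)|+|\x(s)-\x'(s)|}
\]
by $CT^{1/2}\|f-g\|_\infty$. From $u=\Gamma_f(u)$ and $u'=\Gamma_g(u')$, each of $\zeta-\zeta'$, $\eta-\eta'$, $\x-\x'$ is the integral of a difference of the functionals $R$, $\Theta$, $\Xi$ (the last one after the variation-of-constants rearrangement involving $L$), and I would decompose each integrand as
\[
R_f(s,u_s)-R_g(s,u'_s)=[R_f(s,u_s)-R_f(s,u'_s)]+[R_f(s,u'_s)-R_g(s,u'_s)],
\]
and analogously for $\Theta,\Xi$, thereby separating a \emph{Lipschitz-in-$u$} contribution from an \emph{external-field-difference} contribution.

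The external-field-difference parts are straightforward from the explicit expressions in \autoref{lem:U}: since $|z'_1(s)|\geq c\sqrt{s}$ on $\U_T$ for $T$ small, direct inspection yields
\[
|R_f(s,u'_s)-R_g(s,u'_s)|\leq C\sqrt{s}\|f-g\|_\infty,
\]
and $|\Theta_f(s,u'_s)-\Theta_g(s,u'_s)|, |\Xi_f(s,u'_s)-\Xi_g(s,u'_s)|\leq Cs^{-1/2}\|f-g\|_\infty$. Integrating against the relevant kernel (for $\x-\x'$, using the decay $\|\exp\pa{\frac{\log t-\log s}{2a}L}\|\leq (s/t)^{1/2}$ that follows from $L$ having eigenvalues of real part $-a$), each such contribution is of size at most $CT^{1/2}\|f-g\|_\infty$.

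For the Lipschitz-in-$u$ parts, the estimates of \autoref{lem:U} contain a factor $|\zeta-\zeta'|^{1/2}$ too weak to close a direct Gronwall argument. The crucial refinement is that since $u,u'\in\U_T$ and $T$ is small, $\zeta(s),\zeta'(s)\geq as$ by \eqref{eq:distance}, hence
\[
|\sqrt{\zeta(s)}-\sqrt{\zeta'(s)}|\leq \frac{|\zeta(s)-\zeta'(s)|}{2\sqrt{as}};
\]
retracing the computations behind \autoref{lem:U} with this bound produces genuine Lipschitz estimates with $s$-weights, such as
\[
|R_f(s,u_s)-R_f(s,u'_s)|\leq C\pa{|\x-\x'|+s^{-1/2}|\zeta-\zeta'|+\sqrt{s}\,|\eta-\eta'|},
\]
and analogously for $\Theta$ and $\Xi$; terms with more singular weights (e.g.\ $b/\zeta-b/\zeta'$ carrying a nominal $1/s^2$ factor) are tamed using the a priori bound $|\zeta(s)-\zeta'(s)|\leq 2s^{3/2}$ from \eqref{eq:distance}.

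To close the system, I would introduce weighted norms adapted to the a priori bounds on $\U_T$ and to the improved fixed-point estimate $|\x(s)|\leq Cs^{3/2}$ available from the proof of \autoref{lem:contGamma}:
\[
\|A\|_\ast:=\sup_{s<T}\frac{|\zeta(s)-\zeta'(s)|}{s^{3/2}},\quad \|B\|_\ast:=\sup_{s<T}|\eta(s)-\eta'(s)|,\quad \|X\|_\ast:=\sup_{s<T}\frac{|\x(s)-\x'(s)|}{s^{3/2}},
\]
all finite by these bounds. Plugging the refined Lipschitz and external-field estimates into the integral equations yields a closed linear system of the form
\[
\|A\|_\ast+\|B\|_\ast+\|X\|_\ast\leq CT^{1/2}\pa{\|A\|_\ast+\|B\|_\ast+\|X\|_\ast}+C\|f-g\|_\infty,
\]
which, for $T$ small, gives $\|A\|_\ast,\|B\|_\ast,\|X\|_\ast\leq C\|f-g\|_\infty$. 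Finally, the bounds $|re^{i\theta}-r'e^{i\theta'}|\leq |\sqrt{\zeta}-\sqrt{\zeta'}|+\sqrt{\zeta'}\,|\theta-\theta'|\leq C\sqrt{s}\pa{\|A\|_\ast+\|B\|_\ast}$ and $|\x(s)-\x'(s)|\leq s^{3/2}\|X\|_\ast$ give the claimed $CT^{1/2}\|f-g\|_\infty$ estimate. The main technical hurdle is the careful bookkeeping of the $s$-weights in the refined Lipschitz estimates so that the weighted system closes for $T$ small; once the weights are correctly chosen, the closure itself is essentially mechanical.
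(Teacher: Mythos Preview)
Your overall strategy—decompose each integrand into a Lipschitz-in-solution part plus an external-field-difference part, then close a linear inequality—is exactly what the paper does. The difference is that the paper carries this out directly in the $x=(re^{i\theta},x_2,x_3)$ coordinates of $\XX_T$ rather than in the $u=(\zeta,\eta,\x)$ coordinates of $\U_T$. Working with the single complex coordinate $x_1=re^{i\theta}$ instead of the pair $(\zeta,\eta)$ removes at one stroke the $|\zeta-\zeta'|^{1/2}$ difficulty you identify: the dependence of $R,\Theta,\Xi$ on $z_1=a_1x_1$ is genuinely Lipschitz in $x_1$ (with $s$-dependent constants), so the paper obtains bounds such as $|\Xi_f(s,x_s)-\Xi_g(s,x'_s)|\le C|x_s-x'_s|+Cs^{-1/2}\|f-g\|_\infty$ and closes the plain sup-norm inequality
\[
\|x-x'\|_\infty \le CT^{1/2}\|f-g\|_\infty + C\int_0^T\frac{|x_s-x'_s|}{\sqrt s}\,ds \le CT^{1/2}\|f-g\|_\infty + CT^{1/2}\|x-x'\|_\infty
\]
with no weighted norms at all. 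What you buy with your route is a closer match to the fixed-point space of \autoref{lem:contGamma}; what the paper buys is a one-line closure.

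There is, however, a genuine gap in your bookkeeping. Your claimed closed system $\|A\|_\ast+\|B\|_\ast+\|X\|_\ast\le CT^{1/2}(\cdots)+C\|f-g\|_\infty$ does not hold as written: several cross-terms carry no small factor. For instance, the $\eta$-contribution to $R$ is $C\sqrt s\,|\eta-\eta'|$, which after integrating and dividing by $t^{3/2}$ yields a coefficient $C$ (not $CT^{1/2}$) in front of $\|B\|_\ast$ in the $\|A\|_\ast$ inequality. More seriously, with your weight $s^{3/2}$ on $\x$, the $\eta$-contribution to the variation-of-constants integral gives $|\x(t)-\x'(t)|\lesssim t\,\|B\|_\ast$, and dividing by $t^{3/2}$ produces $t^{-1/2}\|B\|_\ast$, which blows up as $t\downarrow 0$. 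This can be repaired—e.g.\ by first closing $\|B\|_\ast$ (whose own inequality does carry small factors throughout) and substituting, or by changing the weight on $\x$—but the mechanical closure you describe will not go through without this extra step. The paper sidesteps all of this by never splitting $re^{i\theta}$ into modulus and phase in the first place.
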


\begin{proof}
	Let us write the solutions relative to $f,g$ as
	\begin{equation*}
		x=(r e^{i\theta},x_2,x_3), \quad x'=(r' e^{i\theta'},x_2',x_3'),
	\end{equation*}
	and use the vortices equations in the form of \autoref{lem:contGamma},
	writing $R_f$, $\Theta_f$ and $\Xi_f$
	in the equations for $x$ and $R_g,\Theta_g,\Xi_g$ in the ones for $x'$ (with a little abuse of notation due to the fact that here we work with the space $\XX_T$ instead of $\U_T$).
	
	In what follows $C$ denotes (possibly different) positive constants depending on
	parameters $\xi,a,b,a_1,a_2,a_3$ and $M$. Using the key regularity assumption $f,g \in E_T$ and arguing as in \autoref{lem:U}, we have the following estimates:
	\begin{align*}
	|R_f(s,x_s) - R_g(s,x'_s)| \leq&
	|\omega_{r}(x_2,x_3)-\omega_{r}(x'_2,x'_3)| 
	\\
	&+
	\frac{2 a_1}{|a_1^2|} 
	\left| x_1 f(s,a_1 x_1) - x'_1 g(s,a_1 x'_1)\right|
	\\
	\leq& 
	C |x_s-x'_s| + C s^{1/2} \|f-g\|_{\infty},\\
	|\Theta_f(s,x_s) - \Theta_g(s,x_s)| \leq&
	\left|\frac{b}{r^2} -\frac{b}{r'^2}\right| +
	\left|\frac{\omega_{\theta}(x_2,x_3)}{r^2}-\frac{\omega_{\theta}(x'_2,x'_3)}{r'^2} \right|
	\\
	&+
	\left| \frac{f(s,a_1 x_1)}{x_1} - \frac{ g(s,a_1 x'_1)}{x'_1}\right|
	\\
	\leq& 
	C s^{-1} |x_s-x'_s| + Cs^{-1/2} \|f-g\|_{\infty},\\
	|\Xi_f(s,x_s) - \Xi_g(s,x'_s)| \leq&
	C |x_s-x'_s| + C s^{-1/2} \|f-g\|_{\infty}.
	\end{align*}
	
	With that being said, let us move to the actual proof. We can estimate
	\begin{align*}
	|x_j(t) - x'_j(t)| &\leq \int_{0}^{t} |\Xi_f(s,x_s) - \Xi_g(s,x'_s)| ds\\
	&\leq C\int_{0}^{t} |x_s-x'_s| ds 
	+ C T^{1/2} \|f-g\|_{\infty},\\
	|r(t) - r'(t)| &\leq \frac{|r^2(t) - r'^2(t)|}{|r(t) + r'(t)|} 
	\leq \frac{C}{\sqrt{t}} \int_{0}^{t} |R_f(s,x_s) - R_g(s,x'_s)| ds \\
	&\leq \frac{C}{\sqrt{t}}
	\int_{0}^{t} |x_s-x'_s| ds +C T \|f-g\|_{\infty},\\
	|\theta(t) - \theta'(t)| &\leq \int_{t}^{T} |\Theta_f(s,x_s) - \Theta_g(s,x'_s)| ds\\
	&\leq C \int_{t}^{T} \frac{|x_s-x'_s|}{s} ds
	+C T^{1/2} \|f-g\|_{\infty}.
	\end{align*}
	Therefore, proceeding as in the proof of \autoref{prop:existenceexternal}:
	\begin{align*}
	\left| r(t) e^{i \theta(t)} - r'(t)e^{i \theta'(t)} \right| 
	\leq& 	|r(t) - r'(t)| + |r'(t)| \left|e^{i \theta(t)} - e^{i \theta'(t)} \right| \\
	\leq& 	C T \|f-g\|_{\infty} +C \int_0^T \frac{|x_s-x'_s|}{\sqrt{s}}ds.
	\end{align*}
	Thus, for every $t \in (0,T)$
	\begin{align*}
	|x_t - x'_t | &\leq C T^{1/2} \|f-g\|_{\infty} +
	C \int_0^T \frac{|x_s-x'_s|}{\sqrt{s}}ds
	\\
	&\leq C T^{1/2} \|f-g\|_{\infty} +
	C T^{1/2} \|x-x'\|_\infty.
	\end{align*}
	Taking $T>0$ small enough so that $CT^{1/2}<1$, we obtain the thesis.	
\end{proof}

As a corollary of the previous result, we obtain uniqueness of solutions 
of class $\XX_T$ up to a certain time $T^*$ 
(possibly smaller than the existence time of solutions given by \autoref{prop:existenceexternal}).

\begin{cor} \label{cor:uniqueness}
	For all $M,\rho>0$ there exist $T^*>0$ such that, for every $T<T^*$ and for every $f\in E_T$
	with $\|f\|_{E_T} \leq M$, there exists a unique solution $z$ of \eqref{eq:vortexternalfield}, satisfying \eqref{eq:vortexternalfield_bc} and $\Phi(z) \in \XX_T$.
\end{cor}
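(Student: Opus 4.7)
The plan is to reduce uniqueness to the Lipschitz-type estimate established in the proof of \autoref{prop:dependenceonf}, applied to two solutions with the \emph{same} external field.

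\emph{Existence.} This is already provided by \autoref{prop:existenceexternal} together with \autoref{rmk:fixedpoint}, which yields a solution $z$ with $\Phi(z)\in\XX_T$ and which satisfies the bound \eqref{eq:vortexternalfield_bc}.

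\emph{Uniqueness.} Suppose $z,z'$ are both solutions of \eqref{eq:vortexternalfield} with the same $f$, satisfying \eqref{eq:vortexternalfield_bc} and $\Phi(z),\Phi(z')\in\XX_T$. By definition of $\XX_T=\Psi(\U_T)$, there exist $u,u'\in\U_T$ with $\Phi(z)=\Psi(u)$ and $\Phi(z')=\Psi(u')$. Since $z,z'$ solve \eqref{eq:vortexternalfield}, \autoref{lem:changeofcoor} and the discussion preceding \autoref{lem:contGamma} (variation of constants in the $\x$-equation, forcing $t_0=0$ by the boundary condition $\lim_{t\to 0}\x(t)=0$ encoded in $\U_T$) imply that both $u$ and $u'$ are fixed points of the map $\Gamma$ associated with $f$. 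It therefore suffices to show that $\Gamma$ has at most one fixed point in $\U_T$ for $T<T^*$ small enough.

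\emph{Contraction-type estimate.} I would repeat verbatim the computations in the proof of \autoref{prop:dependenceonf} with $g=f$. The only input used there for the bounds on $R_f-R_g$, $\Theta_f-\Theta_g$ and $\Xi_f-\Xi_g$ was that both arguments lie in the image of $\U_T$ (so that the Lipschitz-like estimates on the smooth parts $\omega_r,\omega_\theta,\omega_j$, and the bound $r,r'\gtrsim\sqrt{s}$, apply). These estimates are intrinsic to $\XX_T$, not to the particular fixed point produced by Schauder. Setting $f=g$ kills all the $\|f-g\|_\infty$ contributions, and collecting the bounds on $r-r'$, $\theta-\theta'$ and $x_j-x_j'$ in the same way as at the end of the proof of \autoref{prop:dependenceonf} yields
\begin{equation*}
\|x-x'\|_\infty \leq C\,T^{1/2}\,\|x-x'\|_\infty,
\end{equation*}
with $x=\Phi(z)$, $x'=\Phi(z')$ and a constant $C$ depending on $\xi,a,b,a_1,a_2,a_3,M$. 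Choosing $T^*$ so that $C(T^*)^{1/2}<1$ (and, if needed, further reducing it to comply with the assumptions of \autoref{prop:existenceexternal} and \autoref{prop:dependenceonf}) forces $x=x'$, hence $z=z'$.

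\emph{Main obstacle.} The only delicate point is to justify that the difference estimates from \autoref{prop:dependenceonf} apply to an \emph{arbitrary} solution in $\XX_T$, not just the one obtained via the Schauder fixed point. This is handled by observing that membership in $\XX_T$ already encodes all the quantitative information (closeness to the self-similar profile, H\"older control, lower bound on $r$) used in those bounds, so the same Gr\"onwall-type absorption argument goes through without modification.
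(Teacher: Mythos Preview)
Your proposal is correct and is exactly the argument the paper intends: the corollary is stated without a separate proof, as an immediate consequence of \autoref{prop:dependenceonf} obtained by taking $f=g$ in the final inequality $\|x-x'\|_\infty \leq C T^{1/2}\|f-g\|_\infty + C T^{1/2}\|x-x'\|_\infty$ and absorbing the second term. You also correctly isolate the only point that deserves a word --- that the difference estimates in the proof of \autoref{prop:dependenceonf} use only membership in $\XX_T$ (the lower bound $r,r'\gtrsim\sqrt{s}$, the bounds $|x_j|\leq s$, and the Lipschitz control on $\omega_r,\omega_\theta,\omega_j$), not the provenance of the solution from the Schauder construction --- and resolve it appropriately.
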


\subsection{Burst of Three Vortices in a Bounded Domain} \label{ssec:boundeddomain}
Before moving to systems of $N$ vortices on $\C$,
we show in this paragraph how the main result of this section can be used to 
prove the existence of bursting vortices in bounded domains.

Let $D \subseteq \R^2$ be an open, bounded, connected and simply connected 
domain with smooth boundary.
The Green function $G_D$ associated to the Laplace operator in the domain 
$D$ with zero Dirichlet boundary condition on $\partial D$ has the form:
\begin{align*}
G_D(x,y) = G(x,y) + \gamma_D(x,y),
\end{align*}
where $G(x,y)=- \frac{1}{2\pi} \log |x-y|$ is the Green function on $\R^2$ and 
$\gamma_D(x,\cdot)$ is given by the harmonic continuation of $G(x,\cdot)$ from $\partial D$ to the whole of $D$:
\begin{align*}
\Delta_y \gamma_D(x,y) = 0, &\quad y \in D, \\
\gamma_D(x,y) = -G(x,y), &\quad y \in \partial D,
\end{align*}
where $\Delta_y$ denotes the Laplace operator in the $y$ variable;
$\gamma = \gamma_D$ is a smooth function of both its variables and is harmonic in its second variable.
 
In this setting, Euler equations for point vortices $z_j$, $j=1,2,3$ are given by
	\begin{equation}\label{eq:vortexboundeddomain}
	\dot{\bar{z}}_j =\frac{1}{2 \pi i} \sum_{k \neq j}\frac{\xi_k}{z_j - z_k}
	-\sum_{k=1}^3 \xi_k \overline{\nabla^{\perp}_{x} \gamma(z_j,z_k)},
	\quad t \in (0,T),
	\quad j=1,2,3.
	\end{equation} 
Just as in the free case, the system is defined coherently with Euler's equations:
we refer to \cite[Section 4.1]{MaPu94} for a proper introduction to the vortices on bounded domains.
In this section we prove the following:
\begin{prop}\label{prop:existenceboundeddomain}
	Take $z_0 \in D$. Then there exist $T>0$ and a solution $z:(0,T) \to D^3\setminus\triangle^3$ of \eqref{eq:vortexboundeddomain}	such that $\lim_{t \to 0} z_j(t) = z_0$ for every $j=1,2,3$.
\end{prop}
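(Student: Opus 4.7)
The plan is to recast \eqref{eq:vortexboundeddomain} as \eqref{eq:vortexternalfield} with a self-consistent external field and close the loop via a fixed-point argument based on \autoref{prop:existenceexternal} and \autoref{prop:dependenceonf_z}. Shifting coordinates $\tilde z_j = z_j - z_0$, the bounded-domain equation coincides with \eqref{eq:vortexternalfield} for the field
\[
f_{\tilde z}(t, y) = -\sum_{k=1}^3 \xi_k \overline{\nabla^\perp_x \gamma(y + z_0, \tilde z_k(t) + z_0)},
\]
which depends on the unknown trajectory $\tilde z$. Producing a solution of \eqref{eq:vortexboundeddomain} bursting at $z_0$ is then equivalent to finding a fixed point $\tilde z \in \XX_T$ of the map $F$ sending $\tilde z$ to the unique solution of \eqref{eq:vortexternalfield} with external field $f_{\tilde z}$, as provided by \autoref{prop:existenceexternal} and \autoref{cor:uniqueness}.

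To make $F$ well-defined, I would choose $\rho>0$ with $\overline{B_{2\rho}(z_0)}\subset D$ and multiply $f_{\tilde z}$ by a smooth cutoff $\chi\in C^\infty_c(B_{2\rho}(0))$ equal to $1$ on $B_\rho(0)$; this extends $f_{\tilde z}$ to all of $\C$ without altering it at the vortex positions, which stay in $B_\rho(0)$ by the $\sup_t|\tilde z(t)|\leq\rho$ bound built into $\XX_T$. Smoothness of $\gamma$ on $\overline{B_{2\rho}(z_0)}^{\,2}$ yields a uniform bound $\sup_t\|f_{\tilde z}(t,\cdot)\|_{C^2(\C)}\leq M$, with $M$ depending only on $\gamma,\rho,\xi$. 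Applying \autoref{prop:existenceexternal} with these parameters, for $T$ below the corresponding $T^*$, one gets $F(\tilde z)\in\XX_T$ for every $\tilde z\in\XX_T$.

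For the contraction step, smoothness of $\nabla\gamma$ on $\overline{B_{2\rho}(z_0)}^{\,2}$ gives $\|f_{\tilde z}-f_{\tilde z'}\|_\infty\leq C\|\tilde z-\tilde z'\|_\infty$, and hence by \autoref{prop:dependenceonf_z}
\[
\|F(\tilde z)-F(\tilde z')\|_\infty\leq C T^{1/2}\|f_{\tilde z}-f_{\tilde z'}\|_\infty\leq C' T^{1/2}\|\tilde z-\tilde z'\|_\infty.
\]
Choosing $T$ small enough that $C' T^{1/2}<1$ makes $F$ a contraction on the closed subset $\XX_T\subset C((0,T),\C^3)$ (closed as the continuous image of the compact $\U_T$ under $\Phi^{-1}\circ\Psi$); Banach's theorem supplies a unique fixed point $\tilde z$, and $z_j = \tilde z_j + z_0$ is the solution of \eqref{eq:vortexboundeddomain} with $\lim_{t\to 0}z_j(t)=z_0$.

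The main technical obstacle is that trajectories in $\XX_T$ are only H\"older-$1/2$ in time, so $f_{\tilde z}$ inherits only H\"older-$1/2$ time regularity, whereas membership in $E_T$ demands Lipschitz. Two workarounds are natural. First, one may subtract the $\tilde z$-independent constant $f_{\tilde z}(0,0)=-\sum_k\xi_k\overline{\nabla^\perp\gamma(z_0,z_0)}$ via the reduction of \autoref{lem:wlogfzero}, and then verify that the proofs of \autoref{prop:existenceexternal} and \autoref{prop:dependenceonf_z} go through with the Lipschitz-in-time hypothesis weakened to H\"older-$1/2$, at the price of slightly weaker bounds in \autoref{lem:U} that are reabsorbed by shrinking $T$. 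Alternatively, one may time-mollify $f_{\tilde z}$ at scale $\epsilon>0$, apply the propositions to obtain a solution $\tilde z^\epsilon$, and pass to the limit $\epsilon\to 0$ using the continuity estimate of \autoref{prop:dependenceonf_z}, arriving at the unmollified self-consistent fixed point.
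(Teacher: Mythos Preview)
Your strategy—recast the boundary contribution as a self-consistent external field and close via a fixed point built on \autoref{prop:existenceexternal} and \autoref{prop:dependenceonf_z}—is exactly the paper's. The obstacle you flag at the end (trajectories are only $C^{1/2}$ in time, so $f_{\tilde z}$ is not obviously in $E_T$) is real, and it is not a technicality to be patched afterwards: without $f_{\tilde z}\in E_T$ your map $F$ is not even defined, since \autoref{prop:existenceexternal} takes its input from $E_T$. The paper resolves this directly, by an idea you are missing.

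The ingredient is that although each $z_j$ is only $C^{1/2}$, the centre of vorticity $C_z(t)=\sum_j\xi_j z_j(t)$ is \emph{Lipschitz} in time along any candidate solution: in $\dot C_z$ the singular vortex--vortex terms cancel by antisymmetry, leaving only the bounded boundary contribution $-\sum_{j,k}\xi_j\xi_k\nabla^\perp_x\gamma(z_j,z_k)$. The paper then Taylor-expands $\sum_k\xi_k\nabla^\perp_x\gamma(p,z_k)$ in $z_k$ around $0$: the zeroth-order piece is constant in $t$, the first-order piece is a fixed matrix acting on $C_z(t)$ and hence Lipschitz, and the remainder is quadratically small. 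This is precisely what places the self-consistent field in $E_T$.

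Accordingly, the paper runs the fixed point not on $\XX_T$ but on a tailored set $\mathcal Z_T$ of $C^{1/2}$ curves that also carry the a priori bound $[C_z]_{\lip}\le c$, and checks that the solution map preserves this extra constraint (again via the cancellation above). With the Lipschitz issue thus removed, continuity of the map follows from \autoref{prop:dependenceonf_z} as you outline; the paper then invokes Schauder, but since the same estimate yields a $CT^{1/2}$-Lipschitz bound your Banach contraction would work equally well once the domain is corrected to include the $C_z$-constraint. Your proposed workarounds (relax $E_T$ to $C^{1/2}$-in-time, or time-mollify) are not unreasonable, but they force you to reopen the proofs of \autoref{prop:existenceexternal} and \autoref{prop:dependenceonf_z}; the centre-of-vorticity observation avoids this entirely.
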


To simplify notation, we will take $z_0 = 0\in D$;
this is without loss of generality, since translating the domain does not change the problem.
Our strategy is to interpret the boundary term in \eqref{eq:vortexboundeddomain} 
as an external field $f$ acting on a free system of vortices:
\begin{align*}
f(t,p) = F(z_1(t),z_2(t),z_3(t);p) =-\sum_{k=1}^3 \xi_k \overline{\nabla^{\perp}_{x} \gamma(p,z_k(t))}, 
\quad p \in \C,
\end{align*}
and then apply \autoref{prop:existenceexternal}. 
Indeed, as long as the points $z_j$ are far enough from the boundary $\partial D$, 
there is no problem in considering $z$ as taking values in $\C^3\setminus\triangle^3$ 
instead of $D^3\setminus\triangle$.
However, the external field $f$ acting on the point vortices 
now depends on the positions $z_j$ of \emph{all} vortices, unlike in \autoref{prop:existenceexternal}: 
to deal with this issue, we will use the following fundamental observation. 

Define $\rho = d(0,\partial D)/2$ and
denote $B_\rho = \{ p \in \C : |p| \leq \rho \} \subset D \subset \C$. 
Let $z \in C((0,T),B^3_\rho)$ be a candidate solution of \eqref{eq:vortexboundeddomain},
and consider the centre of vorticity
\begin{align*}
C_z(t) = \sum_{j=1}^3 \xi_j z_j(t);
\end{align*}
one can compute the derivative of $C_z$ as a function of time, and notice that 
\begin{align*}
\left|\frac{d}{dt} C_z(t) \right| =
\left|\sum_{j=1}^3 \xi_j \dot{z}_j(t)\right|
&=
\left|\frac{i}{2\pi}\sum_{j=1}^3
\sum_{k \neq j} \frac{\xi_j \xi_k}{\bar{z}_j-\bar{z}_k} -
\sum_{j,k=1}^3 \xi_j \xi_k {\nabla^{\perp}_{x} \gamma(z_j,z_k)}\right|
\\
&=
\left|\sum_{j,k=1}^3 \xi_j \xi_k {\nabla^{\perp}_{x} \gamma(z_j,z_k)}\right|
\leq c,
\end{align*}
where $c$ is a constant depending only on $\xi_j$, $j=1,2,3$ and $\rho$. Therefore, 
the map $C_z:(0,T) \to D$ is Lipschitz continuous,
and since $\lim_{t \to 0} z_j(t) = 0$ for every $j$, we also deduce $C_z(t) \leq ct$ for every $t \in (0,T)$.

\begin{proof}[Proof of \autoref{prop:existenceboundeddomain}]
Let $\rho, c$ be as above and $-2\xi_1 = \xi_2 = \xi_3 \neq 0$. Define
\begin{align*}
\mathcal{Z}_T =& 
\left\{ z \in C^{1/2}((0,T),B^3_\rho) :\right.
\\
& \qquad \lim_{t \to 0 } z_j(t) = 0, 
\, \bra{z_j}_{C^{1/2}_T} \leq 3 \bra{w_j}_{C^{1/2}} 
\, \forall j=1,2,3; 
\\
&\left. \qquad
 C_z \in \lip((0,T),D), \,
\bra{C_z}_{\lip} \leq c \,\right\}.
\end{align*}
For every $z \in \mathcal{Z}_T$ and $p \in B_\rho$, one has
\begin{align*}
\sum_{k=1}^3 \xi_k {\nabla^{\perp}_{x} \gamma(p,z_k)}
-
\nabla^{\perp}_{x} \gamma(p,0) -
\nabla_y \nabla^{\perp}_{x} \gamma(p,0) \cdot C_z(t) = \omega_\gamma(p,z(t)),
\end{align*}
where $\omega_\gamma$ is a smooth function satisfying for every $p \in B_\rho$ and $q \in B^3_\rho$
\begin{align*}
\left| \omega_\gamma(p,q)-
\omega_\gamma(p,q') \right|
\leq
C |q-q'|^2
\end{align*}
for some constant $C$ depending only on $\xi_j$, $j=1,2,3$ and $\rho$.
As a consequence, the function
\begin{align*}
f(t,p) = F(z_1(t),z_2(t),z_3(t);p) = -\sum_{k=1}^3 \xi_\ell \overline{\nabla^{\perp}_{z} \gamma(p,z_k(t))},
\quad t \in [0,T), 	\quad p \in B_\rho,
\end{align*}
is in $E_{\rho,T} = C([0,T), C^2(B_\rho,\C)) \cap\lip ([0,T), C(B_\rho,\C))$ 
and, possibly extending $F:B_\rho^3 \times B_\rho \to \C$ to a smooth function defined on the whole $\C^3 \times \C$ 
(which we still denote $F$), $f$ can also be extended to a function defined 
on the whole $\C$ in such a way that $f \in E_T$ and $\norm{f}_{E_{T}} \leq M$, 
with $M$ depending only on $\xi_j, \bra{w_j}_{C^{1/2}}, j=1,2,3$, and $\rho$.

Therefore, by \autoref{prop:existenceexternal} and \autoref{prop:dependenceonf_z} there exists $T^*>0$ sufficiently small such that for every $T<T^*$ it is well defined the map $\Gamma: \mathcal{Z}_T \to \Phi^{-1} (\XX_T)$ that associates to every $y \in \mathcal{Z}_T $ the unique solution in $\Phi^{-1} (\XX_T)$ of 
\begin{align*} 
	\dot{\bar{z}}_j &= \frac{1}{2 \pi i} \sum_{k \neq j}\frac{\xi_k}{z_j - z_k}
	+ F(y_1(t),y_2(t),y_3(t);z_j),
\end{align*}
and for every $y,y' \in \mathcal{Z}_T$ it holds $\|\Gamma(y) - \Gamma(y') \|_\infty \leq C T^{1/2} \|y-y'\|_\infty$. Moreover, by calculations similar to those in the proof of \autoref{lem:contGamma} it is not difficult to see that $\Gamma$ actually takes values in $\mathcal{Z}_T$. 

Thus, since $w \in \mathcal{Z}_T$ for $T$ small enough, $\Gamma$ is a continuous function from a non-empty, convex, compact subset of $C((0,T),\C^3)$, and by Schauder's Theorem it admits a fixed point $z \in \mathcal{Z}_T$, which gives a solution of \eqref{eq:vortexboundeddomain}
with the desired properties.
\end{proof}

\begin{rmk}
The argument above can be easily adapted to prove the existence of a burst of point vortices in the two dimensional torus $\mathbb{T}^2 = \mathbb{R}^2 / \mathbb{Z}^2$. Indeed, the Green function associated with the Laplace operator on the torus with periodic boundary conditions and zero average is given by
\begin{align*}
G_{\mathbb{T}^2}(x,y) = G(x,y) + \gamma_{\mathbb{T}^2}(x,y),
\end{align*}
with $\gamma_{\mathbb{T}^2}(x,y)$ enjoying the same regularity as in the case of bounded domains.
\end{rmk}

\section{Three-Vortices Bursts in a Configuration of Many Vortices}\label{sec:burstN}

In this section we prove that, given \emph{any} configuration of point vortices 
in the plane $\C$ with prescribed vorticities, 
it is always possible to find a solution of the point vortices system
in which three vortices burst out of a single one. 
As already pointed out above, by translation invariance of \eqref{eq:freevortices},
one can suppose without loss of generality that the bursting vortex 
is initially placed at $z_0 = 0$, which we will assume throughout this section.

Let us introduce some notation in order to distinguish the bursting vortices from the others. 
Let $y(0) \in \C^N\setminus \triangle^N$ be a configuration of $N$ vortices with 
$y_k(0) \neq 0$ for every $k=1,\dots,N$ and intensities $\zeta_k \in \R$. 
In the origin $0 \in \C$ we consider a point vortex with intensity 
$\xi \in \R$, $\xi \neq 0$, which is the one we are going to split into three vortices 
of intensities $\xi_1=- \frac{1}{3} \xi$ and $\xi_2=\xi_3 =\frac{2}{3} \xi $ for positive times $t>0$. 
In particular, we will prove the following:

\begin{thm} \label{thm:Nvortices}
Let $y(0) \in \C^N\setminus \triangle^N$, $\zeta_k$, $k=1,\dots,N$ and $\xi$, $\xi_j$, $j=1,2,3$ as above. Then there exists $T>0$ and a solution $(z,y):(0,T) \to \C^3 \times \C^N$ of the full system:
\begin{align} \label{eq:Nvorticesz}
	\dot{\bar{z}}_j	
	&= \frac{1}{2 \pi i} \sum_{\ell \neq j} \frac{\xi_\ell}{z_j - z_\ell}
	+\frac{1}{2 \pi i} \sum_{h = 1}^N \frac{\zeta_h}{z_j - y_h}, \quad j=1,2,3,	\\
	\label{eq:Nvorticesy}
	\dot{\bar{y}}_k	
	&= \frac{1}{2 \pi i} \sum_{\ell=1}^3 \frac{\xi_\ell}{y_k - z_\ell}
	+ \frac{1}{2 \pi i} \sum_{h \neq k} \frac{\zeta_h}{y_k - y_h},
	\quad k=1,\dots,N, 
\end{align}
such that $\lim_{t\to 0} z_j(t) = 0$, $j=1,2,3$, and $\lim_{t \to 0} y_k(t) =y_k(0)$, $k=1,\dots,N$.
\end{thm}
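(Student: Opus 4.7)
The plan is to decouple the full system \eqref{eq:Nvorticesz}--\eqref{eq:Nvorticesy} by a Schauder fixed point argument on the path of the non-bursting vortices, in the spirit of the proof of \autoref{prop:existenceboundeddomain}. Set
\begin{equation*}
	\rho = \tfrac{1}{4} \min_k |y_k(0)|, \qquad \delta = \tfrac{1}{4} \min_{h \neq k} |y_h(0) - y_k(0)|,
\end{equation*}
and, for $T>0$ small and $M'>0$ to be chosen, define
\begin{equation*}
\mathcal{Y}_T = \set{ \eta \in \lip([0,T], \C^N) : \eta_k(0) = y_k(0),\ |\eta_k(t)| \geq 2\rho,\ |\eta_h(t)-\eta_k(t)| \geq 2\delta,\ \bra{\eta_k}_{\lip} \leq M' }.
\end{equation*}
This is a non-empty (the constant path $\eta_k(t) \equiv y_k(0)$ lies in it) convex subset of $C([0,T], \C^N)$, which is compact by Ascoli--Arzelà.

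Given $\eta \in \mathcal{Y}_T$, the field that the bursting triplet feels from the $\eta$-configuration,
\begin{equation*}
	f_\eta(t,p) = \frac{1}{2\pi i} \sum_{h=1}^N \frac{\zeta_h}{p - \eta_h(t)},
\end{equation*}
is smooth on $B_\rho = \set{|p| \leq \rho}$ because $|\eta_h(t)| \geq 2\rho$. After cutting off and extending $f_\eta$ to all of $\C$, one has $\norm{f_\eta}_{E_T} \leq M$ with $M$ depending only on $\rho, M'$ and the $\zeta_h$'s, uniformly in $\eta$. \autoref{prop:existenceexternal} together with \autoref{cor:uniqueness} then produces a unique burst solution $z = z[\eta] \in \Phi^{-1}(\XX_T)$ of \eqref{eq:vortexternalfield} with external field $f_\eta$; shrinking $T$ if needed, $|z_j[\eta](t)| \leq \rho$ for all $t$ and $j$. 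Furthermore \autoref{prop:dependenceonf_z} yields
\begin{equation*}
	\norm{z[\eta] - z[\eta']}_\infty \leq C T^{1/2} \norm{f_\eta - f_{\eta'}}_\infty \leq C T^{1/2} \norm{\eta - \eta'}_\infty.
\end{equation*}

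With $z[\eta]$ in hand, define $\Psi(\eta) = \tilde\eta$ as the solution of the regular ODE
\begin{equation*}
	\dot{\bar{\tilde{\eta}}}_k = \frac{1}{2\pi i} \sum_{\ell=1}^3 \frac{\xi_\ell}{\tilde\eta_k - z_\ell[\eta](t)} + \frac{1}{2\pi i} \sum_{h\neq k} \frac{\zeta_h}{\tilde\eta_k - \tilde\eta_h}, \qquad \tilde\eta_k(0) = y_k(0).
\end{equation*}
Since $\sum_\ell \xi_\ell = \xi$ and $z_\ell[\eta](t) \to 0$, the first sum extends continuously up to $t=0$ with limit $\xi/(2\pi i\, \tilde\eta_k)$, and the right-hand side is Lipschitz in the spatial variables on $\set{|\tilde\eta_k|\geq 2\rho,\ |\tilde\eta_h-\tilde\eta_k|\geq 2\delta}$. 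Standard ODE theory then gives a local solution with $|\dot{\tilde\eta}_k|$ bounded by some $C_0 = C_0(\rho,\delta,\xi,\zeta_h)$. Taking $M' \geq C_0$ and $T$ small enough (depending on $\rho,\delta,C_0$) ensures $\tilde\eta \in \mathcal{Y}_T$, and continuity of $\Psi$ for the uniform topology follows from Gronwall applied to $\tilde\eta-\tilde\eta'$, together with the above estimate on $z[\eta]-z[\eta']$. Schauder's theorem then yields a fixed point $\eta = \Psi(\eta)$, and $(z[\eta],\eta)$ is the desired solution.

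The main delicacy is verifying that $\Psi$ is a self-map of $\mathcal{Y}_T$: one must guarantee that, uniformly in $\eta$, the non-bursting vortices neither approach the origin (where the bursting triplet lives) nor collide with one another on a common interval $[0,T]$, and that the Lipschitz norm of $\tilde\eta$ is controlled by a constant independent of $\eta$. This is precisely where the smallness of $T$ is exploited, together with the fact that near $t=0$ the triplet acts on each $\tilde\eta_k$ as a single effective vortex of intensity $\xi = \xi_1+\xi_2+\xi_3$ concentrated at the origin, reducing \eqref{eq:Nvorticesy} to a regular perturbation of the standard $(N+1)$-vortices ODE in a neighbourhood of its initial datum.
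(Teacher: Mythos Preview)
Your argument is correct and follows the same overall strategy as the paper: treat the influence of the $N$ non-bursting vortices on the triplet as an external field, invoke \autoref{prop:existenceexternal} and \autoref{prop:dependenceonf_z}/\autoref{cor:uniqueness} to obtain a well-defined, continuous burst map $\eta\mapsto z[\eta]$, and close the loop by a Schauder fixed point.

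The organisation differs slightly from the paper. There the fixed point is set up on the product $\XX_T\times\mathcal{Y}_{\rho,y_0,T}$, with the $y$-component updated by the explicit Picard-type integral map
\[
\tilde y_k(t)=y_k(0)+\int_0^t\Bigl(\tfrac{1}{2\pi i}\sum_{\ell\neq k}\tfrac{\zeta_\ell}{y_k(s)-y_\ell(s)}+G(z(s);y_k(s))\Bigr)\,ds,
\]
so that the integrand involves the \emph{input} $y$ rather than $\tilde y$. You instead work on $\mathcal{Y}_T$ alone and define $\Psi(\eta)$ by \emph{solving} the full nonlinear ODE for $\tilde\eta$ with $z[\eta]$ frozen. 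This buys you a set $\mathcal{Y}_T$ that is transparently convex and compact, at the modest cost of invoking Picard--Lindel\"of and a Gr\"onwall estimate for continuity; the paper's integral map is more explicit but requires carrying the extra factor $\XX_T$ in the fixed-point space. Your observation that at $t=0$ the triplet acts on $\tilde\eta_k$ as a single vortex of intensity $\xi$ at the origin is exactly the content of \autoref{lem:apriori} in the paper, and guarantees the uniform-in-$\eta$ self-mapping property you flag as the main delicacy.
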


The proof of \autoref{thm:Nvortices} will make use 
of an idea similar to that of \autoref{prop:existenceboundeddomain}:
we will interpret the contribution due to vortices $y_k$ 
in the dynamics of $z_j$ as an external field acting on the bursting vortices only, 
so that the desired existence follows from the results of the previous section. 
However, as in the proof of \autoref{prop:existenceboundeddomain}, 
the external field depends on the position of vortices $z_j$ themselves 
and the argument requires some preliminary work.

Our first step is to ensure that , for small times $T$, 
the vortices $y_k$, $k=1,\dots,N$ do not travel far away from their initial positions $y_k(0)$:
this implies that they generate a sufficiently regular external field on the vortices $z_j$, $j=1,2,3$, 
which will be used in \autoref{ssec:fixedpointN} to prove \autoref{thm:Nvortices}.

\begin{lemma} \label{lem:apriori}
	For every initial configuration $y(0) \in \C^N\setminus \triangle^N$ of the $N$-vortices, with $y_k(0) \neq 0$ for every $k=1,\dots,N$ and for every $\rho>0$ sufficiently small there exists $T^*>0$ such that, if $(z,y):(0,T^*) \to \C^3 \times \C^N$ is a solution to the $(3+N)$-vortices dynamics \eqref{eq:Nvorticesz}, \eqref{eq:Nvorticesy} on $(0,T^*)$ satisfying 
	\begin{gather*}
		\sup_{t<T^*} \max_{j} |z_j(t)| \leq \rho, 
\\
		\lim_{t \to 0} y_k(t) = y(0)
		\quad
		\forall k=1,\dots,N,
	\end{gather*} 
	then
	\begin{align} \label{eq:smalltravel}
		\sup_{t < T^*}
		\max_{k} |y_k(t)-y_k(0)|
		\leq \rho.
	\end{align}
\end{lemma}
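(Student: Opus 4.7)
The plan is a standard bootstrap/continuation argument exploiting the fact that, on short time scales, the vortices $y_k$ can only feel a bounded velocity field as long as they remain well separated both from one another and from the bursting triple.

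Specifically, I would first set
\begin{align*}
d := \min\left(\min_{k} |y_k(0)|,\; \min_{k \neq h} |y_k(0) - y_h(0)|\right),
\end{align*}
which is strictly positive by the assumption $y(0) \in \C^N \setminus \triangle^N$ and $y_k(0) \neq 0$ for all $k$, and restrict to $\rho < d/4$. Then, whenever the $z_j$'s lie in the ball of radius $\rho$ centred at $0$ and each $y_k$ lies in the ball of radius $\rho$ centred at $y_k(0)$, two applications of the triangle inequality give
\begin{align*}
|y_k - z_j| \geq d - 2\rho \geq d/2, \qquad |y_k - y_h| \geq d - 2\rho \geq d/2
\end{align*}
for all $j=1,2,3$ and $k \neq h$. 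Consequently, the right-hand side of \eqref{eq:Nvorticesy} is bounded in modulus by a constant $V = V(\xi_j, \zeta_h, d)$ depending only on the vorticities and the geometry of the initial configuration, and in particular independent of $t$ and of the particular solution.

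The main step is then a continuity argument. Define
\begin{align*}
\tau := \sup\left\{ t \in (0,T^*] : \sup_{0 < s \leq t}\max_k |y_k(s) - y_k(0)| \leq \rho \right\}.
\end{align*}
The boundary condition $\lim_{t \to 0} y_k(t) = y_k(0)$, together with continuity of each $y_k$ on $(0,T^*)$, guarantees $\tau > 0$. Integrating the velocity bound $|\dot y_k(t)| \leq V$ from $0$ to $t \leq \tau$ yields $|y_k(t) - y_k(0)| \leq V t$, so choosing $T^* < \rho/V$ forces $\tau = T^*$ and gives \eqref{eq:smalltravel}.

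I do not anticipate any serious obstacle: the only delicate points are the quantitative choice of $\rho$ relative to $d$ (so that the separation $d/2$ survives the bootstrap) and the justification that $|\dot y_k| \leq V$ can be integrated down to $t = 0^+$, which uses the prescribed limit of $y_k$ at $0$ and the fact that the bound $\sup_{t<T^*}|z_j(t)| \leq \rho$ is assumed to hold uniformly on the entire interval $(0,T^*)$.
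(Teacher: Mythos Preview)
Your proposal is correct and follows essentially the same bootstrap/continuation argument as the paper: bound the velocity of each $y_k$ uniformly while the separation assumptions hold, then choose $T^*$ small enough that the resulting displacement stays below $\rho$. The only cosmetic differences are in the constants---the paper takes $\rho$ no larger than one third of the minimal initial separation (so the surviving gap is $\geq\rho$ and the velocity bound scales like $1/\rho$, giving $T^*\sim\rho^2$), whereas you take $\rho<d/4$ and obtain a velocity bound in terms of $d$; both choices work.
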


\begin{proof}
Set $\rho_1 = \frac{1}{3}
\min_{k \neq \ell} |y_k(0)-y_\ell(0)| $, $\rho_2 = \frac{1}{3}
\min_{k} |y_k(0)-0| $ and take $\rho \leq \min \rho_1,\rho_2$.
Let $(z,y):(0,T) \to \C^3 \times \C^N$ be a solution of \eqref{eq:Nvorticesz}, \eqref{eq:Nvorticesy} on time interval $(0,T)$ satisfying the hypotheses above. 
Now we prove that the restriction $(z,y)$ to $(0,T^*)$ satisfies \eqref{eq:smalltravel} for $T^*$ sufficiently small. Take 
\begin{align*}
\tau = \inf \left\{ s>0: \sup_{t < s}
		\max_{k} |y_k(t)-y_k(0)|
		\leq \rho \right\}.
\end{align*} 
Of course $(z,y)$ satisfies  \eqref{eq:smalltravel} up to time $\tau$, so we only need to prove that $\tau$ is bounded from below by a constant independent of $(z,y)$.
However, by \eqref{eq:Nvorticesz}, \eqref{eq:Nvorticesy} the following bound on $\dot{y}_k$ holds for every $k=1,\dots,N$ and $t < \tau$:
\begin{align*}
|\dot{y_k}(t)| \leq \frac{(N+2)\max_{j,\ell} |\xi_j|,|\zeta_\ell|}{2 \pi \rho},
\end{align*}
so that $\tau \geq T^* = \frac{2 \pi \rho^2}{(N+2) \max_{j,\ell}  |\xi_j|,|\zeta_\ell|}$.
\end{proof}

\subsection{\texorpdfstring{Fixed Point for the $N$-vortices system}{Fixed Point for the N-vortices system}} \label{ssec:fixedpointN}
We can now prove our main result, \autoref{thm:Nvortices}.
The idea is similar to that of \autoref{prop:existenceboundeddomain}, 
and it relies on the results of \autoref{sec:burstinexternal} and Schauder fixed point Theorem.

First of all, we introduce suitable spaces in which to set the fixed point argument.
Take $\rho$ sufficienlty small and $T^*$ given by
\autoref{prop:existenceexternal}, \autoref{prop:dependenceonf} and \autoref{lem:apriori}. For every $T < T^*$ define $\XX_T$ as in \autoref{sec:burstinexternal}, and
\begin{align}\nonumber
	\mathcal{Y}_{\rho,y_0,T} &:= \left\{ y \in \lip((0,T),\C^N):\right.\\
	\label{eq:bc_y}
	&\qquad \lim_{t \to 0 } y_k(t) = y_k(0);\\
	\label{eq:distance_y}
	&\qquad |y_k(t) - y_k(0)| \leq \rho \quad \forall t \in (0,T);\\ 
	\label{eq:holder_y}
	&\qquad \left.
\bra{y}_{\lip} \leq
\frac{(N+2)\max_{j,\ell} |\xi_j|,|\zeta_\ell|}{2 \pi \rho} \quad
\right\}.
\end{align}
The set $\mathcal{Y}_{\rho,y_0,T}$ is convex and compact in $C((0,T),\C^N)$,
the latter endowed with the supremum norm, and it is not empty since 
the constant $y_k(t) \equiv y_k(0)$ belongs to it. 

In order to prove \autoref{thm:Nvortices}, we will regard our equations
as a fixed point of a continuous function on the space $\XX_T \times \mathcal{Y}_{\rho,y_0,T}$. 
We denote by
\begin{align} \label{eq:fNvortices}
f(t,p) = F(y_1(t),\dots,y_N(t);p) =
\frac{1}{2\pi i} \sum_{k=1}^N
\frac{\zeta_k}{p-y_k(t)},
\quad p \in \C,
\end{align}
the field acting on $z_j$ due to the presence of vortices $y_k$.
As already done in the proof of \autoref{prop:existenceboundeddomain}, we can modify $F$ smoothly outside the region of space $B_\rho^N \times B_\rho$ in such a way that, by \autoref{lem:apriori}, for every $y \in \mathcal{Y}_{\rho,y_0,T}$ it holds $f \in E_T$ and $\norm{f}_{E_T} \leq M$, 
with $M$ depending only on $N$, $\xi_j$, $j=1,2,3$, $\zeta_k$, $k=1,\dots,N$ and $\rho$.
It is worth noticing that this modification does not affect the dynamics of vortices $z_j$ for $T<T^*$.

Analogously, we introduce the vector field acting on $y_k$, $k=1,\dots,N$:
\begin{align*}
g(t,p) = G(z_1(t),z_2(t),z_3(t);p) =
\frac{1}{2\pi i} \sum_{j=1}^3
\frac{\xi_j}{p-z_j(t)},
\quad p \in \C.
\end{align*}

Define $\Gamma^\mathcal{X}: \mathcal{Y}_{\rho,y_0,T} \to \XX_T$ 
to be the map that associates to every $y \in \mathcal{Y}_{\rho,y_0,T}$
the unique solution $x \in \XX_T$ of \eqref{eq:vortexternalfield} 
given by \autoref{prop:existenceexternal} and \autoref{cor:uniqueness}, 
with external field $f = F(y)$ given by \eqref{eq:fNvortices} above.

Notice that there exists a constant $C$, depending only on $N$, $\xi_j$, $j=1,2,3$, $\zeta_k$, $k=1,\dots,N$, $\zeta_k$ and $\rho$, such that for every $y,y' \in  \mathcal{Y}_{\rho,y_0,T}$ and $|p| \leq \rho$ it holds 
\begin{align*}
\left| F(y_1(t),\dots,y_N(t);p) - F(y'_1(t),\dots,y'_N(t);p) \right|\leq C |y(t) - y'(t)|.
\end{align*}
Hence, by \autoref{prop:dependenceonf} the map $\Gamma^\mathcal{X}$ is Lipschitz continuous:
\begin{align*}
\|\Gamma^\mathcal{X}(y) - \Gamma^\mathcal{X}(y') \|_\infty \leq C T^{1/2} \|y-y'\|_\infty.
\end{align*}

Let now $\Gamma^\mathcal{Y}: \XX_T \times \mathcal{Y}_{\rho,y_0,T} \to C((0,T),\C^N)$ 
be the map that associated to every $(x,y) \in \XX_T \times \mathcal{Y}_{\rho,y_0,T}$
the curve $\tilde{y}$ of components
\begin{align*}
\tilde{y}_k(t) =
\int_0^t \left( \frac{1}{2\pi i} \sum_{\ell \neq k}
\frac{\zeta_\ell}{y_k-y_\ell}  + G(\Phi^{-1}(x(s));y_k) \right) ds.
\end{align*}
Arguing as in \autoref{lem:contGamma}, one can prove that $\Gamma^\mathcal{Y}$ takes values in $\mathcal{Y}_{\rho,y_0,T}$ and $\Gamma^\mathcal{Y}: \XX_T \times \mathcal{Y}_{\rho,y_0,T} \to \mathcal{Y}_{\rho,y_0,T}$  is continuous with respect to the supremum norm for $T$ small enough.
As a corollary, we get:
\begin{lemma} \label{lem:GammaXYcont}
	For $T>0$ sufficiently small, the map $\Gamma:\XX_T \times \mathcal{Y}_{\rho,y_0,T} \to \XX_T \times \mathcal{Y}_{\rho,y_0,T}$ given by
\begin{align*}
	\Gamma(x,y) = (\Gamma^\mathcal{X}(y),\Gamma^\mathcal{Y}(x,y))
\end{align*}	 
is continuous with respect to the supremum norm.
\end{lemma}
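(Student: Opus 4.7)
The plan is to treat the statement as a packaging corollary: the target $\XX_T \times \mathcal{Y}_{\rho,y_0,T}$ carries the product of the two supremum norms, so continuity of $\Gamma$ is equivalent to continuity of each of its two coordinate maps separately, and both pieces have essentially been verified in the discussion preceding the lemma. My task is simply to assemble them and make the constants explicit.

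For the $\XX_T$-component the map $(x,y) \mapsto \Gamma^\mathcal{X}(y)$ does not depend on $x$, and the Lipschitz bound
\[
\|\Gamma^\mathcal{X}(y) - \Gamma^\mathcal{X}(y')\|_\infty \leq C T^{1/2} \|y - y'\|_\infty
\]
has just been derived from \autoref{prop:dependenceonf} combined with the uniform Lipschitz estimate on the field $F$ from \eqref{eq:fNvortices}. The key structural ingredient is the uniform lower bound $|p - y_k(t)| \gtrsim \rho$ on the region $B_\rho \times \mathcal{Y}_{\rho,y_0,T}$, which, together with the smooth extension of $F$ outside this region, controls the Biot--Savart singularity and its derivatives.

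For the $\mathcal{Y}_{\rho,y_0,T}$-component I would expand the integrand defining $\Gamma^\mathcal{Y}$ and estimate its two pieces. The pure $N$-vortex interaction $\sum_{\ell\neq k}\zeta_\ell/(y_k-y_\ell)$ is Lipschitz in $y$ with constant depending only on $N$, the $\zeta_\ell$ and the uniform separation $|y_k - y_\ell|$ enforced by membership in $\mathcal{Y}_{\rho,y_0,T}$ for $\rho$ small, exactly as in \autoref{lem:apriori}. The cross-term $G(\Phi^{-1}(x(s));y_k) = \frac{1}{2\pi i}\sum_j \xi_j/(y_k - z_j(s))$ is smooth on the region where $|y_k - z_j|$ stays bounded below by a positive constant; this separation is guaranteed by $y_k(0) \neq 0$ combined with $|y_k(t) - y_k(0)| \leq \rho$ for $y \in \mathcal{Y}_{\rho,y_0,T}$ and $\sup_t|z_j(t)| \leq \rho$ for $x \in \XX_T$, for $\rho$ small enough. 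Hence the cross-term is jointly Lipschitz in $(x,y)$, and integrating over $[0,t]$ with $t \leq T$ yields
\[
\|\Gamma^\mathcal{Y}(x,y) - \Gamma^\mathcal{Y}(x',y')\|_\infty \leq C T \bigl(\|x - x'\|_\infty + \|y - y'\|_\infty\bigr).
\]

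Combining the two bounds via the triangle inequality gives, for $T$ small enough,
\[
\|\Gamma(x,y) - \Gamma(x',y')\|_\infty \leq C T^{1/2}\bigl(\|x - x'\|_\infty + \|y - y'\|_\infty\bigr),
\]
which yields the required continuity (in fact Lipschitz continuity) of $\Gamma$ on the product. No genuine obstacle arises here, since the heavy work has already been done in \autoref{prop:dependenceonf} and in the argument modelled on \autoref{lem:contGamma} that established continuity of $\Gamma^\mathcal{Y}$. The only real care needed is in keeping the uniform positive separations between the $y_k$'s and between the $y_k$'s and the $z_j$'s, so that the singular kernels stay smooth where they are evaluated; both separations are built into the definition of $\mathcal{Y}_{\rho,y_0,T}$ and the choice of $\rho$ small enough provided by \autoref{lem:apriori}.
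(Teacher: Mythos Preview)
Your proposal is correct and follows essentially the same route as the paper, which treats the lemma as an immediate corollary of the preceding discussion (the Lipschitz bound on $\Gamma^\mathcal{X}$ from \autoref{prop:dependenceonf} and the continuity of $\Gamma^\mathcal{Y}$ argued ``as in \autoref{lem:contGamma}''). Your write-up in fact supplies more detail than the paper, obtaining an explicit Lipschitz bound on $\Gamma^\mathcal{Y}$ rather than mere continuity; this sharper estimate is available here precisely because the separations $|y_k-y_\ell|$ and $|y_k-z_j|$ stay uniformly positive on $\XX_T\times\mathcal{Y}_{\rho,y_0,T}$, so none of the singular near-$t=0$ behaviour that forced the softer continuity argument in \autoref{lem:contGamma} is present.
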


With this at hand, we can finally conclude the proof of our main result.
\begin{proof}[Proof of \autoref{thm:Nvortices}]
Take $\rho$ and $T$ as above.
Since $\XX_T \times \mathcal{Y}_{\rho,y_0,T}$ is a non-empty, compact, convex subset of the Banach space $C((0,T),\C^3 \times \C^N)$ and $\Gamma:\XX_T \times \mathcal{Y}_{\rho,y_0,T} \to \XX_T \times \mathcal{Y}_{\rho,y_0,T}$ is continuous by \autoref{lem:GammaXYcont}, by Schauder fixed point Theorem there exists a fixed point $(x,y) \in \XX_T \times \mathcal{Y}_{\rho,y_0,T}$ for $\Gamma$. Defining $z = \Phi^{-1}(x)$, it is immediate to check that $(z,y)$ is indeed a solution of \eqref{eq:Nvorticesz}, \eqref{eq:Nvorticesy} satisfying $\lim_{t\to 0} z_j(t) = 0$, $j=1,2,3$, and $\lim_{t \to 0} y_k(t) =y_k(0)$, $k=1,\dots,N$.
\end{proof}

\begin{rmk}
Combining the ideas of \autoref{prop:existenceboundeddomain} and \autoref{thm:Nvortices}, one could prove, for any initial configuration of vortices and any arbitrary domain $D$, the existence of a solution to the $N$-vortices system on $D$ with one vortex splitting into three for positive times. The proof basically consists in writing both the boundary term $\nabla^\perp_x \gamma_D$ and field generated by the other vortices $F(y_1(t),\dots,y_N(t);p)$ as an external field on the bursting vortices $z_j$, $j=1,2,3$. We omit the remaining details.
\end{rmk}

\section{Weak Solutions to Euler's Equations and their Intrinsic Stochasticity}\label{sec:weaksol}

We have established in \autoref{thm:Nvortices} existence of vortices bursts for small time intervals.
By inverting time $t\mapsto -t$ and changing signs of intensities $\xi,\xi_j \mapsto -\xi,-\xi_j$,
our result proves also existence of vortices configurations collapsing in finite time
with prescribed intensities and positions at the time of collapse.
With this result at hand, we can thus produce a variety of singular solutions.

In this section, we describe in what sense the point vortices system can be
seen as a weak solution to Euler's equation, both in presence and absence of collapses and bursts,
and then discuss how constructions made possible by \autoref{thm:Nvortices}
make weak solutions not unique.

\subsection{Vortices as Curves in Configuration Space}\label{ssec:configspace}
First and foremost, we need to specify the space in which to set our notion of weak solution.
The natural choice is
\begin{equation*}
\Gamma=\bigcup_{N\geq 0}\Gamma_N, 
\quad \Gamma_N=\set{\gamma=\sum_{j=1}^N \xi_j \delta_{z_j}:\xi_j\in\R, z_j\in\R^2, z_j\neq z_k\text{ if }j\neq k},
\end{equation*}
the \emph{configuration space} of point vortices on $\R^2$, 
to be regarded as a subset of the space of finite signed measures $\M(\R^2)=C_0(\R^2)^*$ endowed with the weak* topology:
this is the space to which the empirical measure $\sum_{j=1}^{N}\xi_j \delta_{z_j}$
of point vortices belong. 
We denote as usual by $|\mu|=\mu^++\mu^-$ the variation of $\mu\in\M(\R^2)$ (in terms of Hahn decomposition).

Since we want to regard point vortices systems as curves in $\Gamma$,
the choice of weak* topology is somewhat forced:
continuous curves $t\mapsto \mu_t$ in $\Gamma$ endowed with the total variation norm $\norm{\mu}=\abs{\mu}(\R^2)$
are such that $t\mapsto \mu_t(\set{x})$ is continuous for every point $x\in\R^2$,
so we can not discuss a system of moving particles in such setting.
To lighten notation, we will denote by $I=(a,b)$ any open interval containing $0$, possibly with $a=-\infty$ or $b=+\infty$.
The following is a trivial, but nonetheless crucial observation for what follows.

\begin{rmk}\label{rmk:extension}
	Let $\xi_1,\dots,\xi_N\in\R$ and $z_1,\dots, z_N\in\R^2$ be given, 
	and consider a solution $I\ni t\mapsto (z_1(t),\dots, z_N(t))\in \R^{2 \times N} \setminus\triangle^N$ 
	of \eqref{eq:freevortices}.
	The empirical measure $I\ni t\mapsto\omega_t=\sum_{j = 1}\xi_j \delta_{z_j(t)}\in\Gamma_N$
	is clearly a weakly* continuous curve.
	
	Consider now the case in which some vortices, say the ones with $j=1,\dots,k$,
	collapse at the right extreme $b$ of $I=(a,b)$, while all others have distinct limits.
	If we consider the configuration at time $b$
	of $N-k+1$ vortices of positions $y_1=\lim_{t \to b} x_1(t)$ and 
	$y_j=\lim_{t \to b} z_{j+k}(t)$ for $j=2,\dots,N-k+1$,
	with intensities $\chi_1=\xi_1+\cdots+\xi_k$ and $\chi_j=\xi_j$ for $j=2,\dots,N-k+1$,
	by local well-posedness there exists a solution 
	$[b,c)\ni t\mapsto (y_1(t),\dots, y_N(t))\in \R^{2 \times N} \setminus\triangle^N$ 
	of the $N-k+1$ vortices system starting from such new configuration.
	The map
	\begin{equation*}
		(a,c)\ni t\mapsto \omega_t=
		\begin{cases}
		\sum_{j = 1}^N\xi_j \delta_{z_j(t)} &t\in(a,b)\\
		\sum_{j = 1}^{N-k+1}\chi_j \delta_{y_j(t)} &t\in [b,c)
		\end{cases}
		\,\in \Gamma
	\end{equation*}
	still defines a weakly* continuous curve.
	The analogue holds for vortices bursting out of the same position at time $a$,
	or for solutions in which distinct groups of vortices collapse or burst
	in different points at the same time.
\end{rmk}

Vortices dynamics that do not include singularities are known to satisfy
Euler's equations \autoref{euler} in a weak form,
but in fact this is still true for solutions ``extended'' in the fashion of \autoref{rmk:extension}. 
The weak formulation we are referring to is based on the following:

\begin{lemma}
	Any smooth solution $\omega\in C^\infty([0,T]\times \R^2)$ of \eqref{euler} satisfies,
	for any $\phi\in C^\infty_c(\R^2)$,
	\begin{align}\label{eq:dssymm}
	\brak{\phi,\omega_t}-\brak{\phi,\omega_0}
	&=\int_0^t \int_{\R^{2\times 2}} H_\phi(x,y)\omega_s(x)\omega_s(y)dxdyds\\ 
	\nonumber
	&=\int_0^t \brak{H_\phi,\omega_s\otimes\omega_s}_{L^2(\R^{2\times 2})}ds,\\
	H_\phi(x,y)&=\frac12 (\nabla\phi(x)-\nabla\phi(y))\cdot K(x-y), \quad x,y\in\R^2,
	\end{align}
	where $H_\phi(x,y)$ is a symmetric function, 
	smooth outside the diagonal set $\triangle^2$ where it has a jump discontinuity
	(regardless of $\phi$),
	and $K$ is the Biot-Savart kernel.
\end{lemma}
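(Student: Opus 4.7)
The plan is to derive the identity directly from the strong form of Euler's equation by testing against $\phi$, integrating by parts in space, then symmetrising the resulting double integral using the antisymmetry of the Biot-Savart kernel.

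First, I would fix $\phi\in C^\infty_c(\R^2)$ and compute $\frac{d}{dt}\brak{\phi,\omega_t}$. Multiplying the transport equation $\partial_t \omega +u\cdot\nabla\omega=0$ by $\phi$, integrating over $\R^2$, and using $\div u=0$ together with the compact support of $\phi$, integration by parts yields
\begin{equation*}
\frac{d}{dt}\brak{\phi,\omega_t}=\int_{\R^2} u_t(x)\cdot \nabla\phi(x)\, \omega_t(x)\, dx.
\end{equation*}
Next, the Biot-Savart law $u_t(x)=\int_{\R^2} K(x-y)\,\omega_t(y)\, dy$ (valid for smooth compactly supported or at least sufficiently decaying $\omega_t$; in the statement we are working with $C^\infty$ solutions so this is unproblematic, modulo a remark if $\omega$ is not compactly supported) gives
\begin{equation*}
\frac{d}{dt}\brak{\phi,\omega_t}=\int_{\R^2\times\R^2}\nabla\phi(x)\cdot K(x-y)\,\omega_t(x)\omega_t(y)\,dx\,dy.
\end{equation*}

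Then I would symmetrise. The Biot-Savart kernel satisfies $K(-z)=-K(z)$, so swapping the dummy variables $x\leftrightarrow y$ in the integral above and averaging the two expressions produces
\begin{equation*}
\int_{\R^2\times\R^2}\tfrac12\bigl(\nabla\phi(x)-\nabla\phi(y)\bigr)\cdot K(x-y)\,\omega_t(x)\omega_t(y)\,dx\,dy=\brak{H_\phi,\omega_t\otimes\omega_t}_{L^2(\R^{2\times 2})}.
\end{equation*}
Integrating in time from $0$ to $t$ gives the desired identity \eqref{eq:dssymm}.

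Finally, I would verify the regularity statement on $H_\phi$. Symmetry $H_\phi(x,y)=H_\phi(y,x)$ follows from $K(-z)=-K(z)$. Smoothness of $H_\phi$ off $\triangle^2$ is immediate from smoothness of $\phi$ and of $K$ on $\R^2\setminus\{0\}$. For the behaviour near the diagonal, writing $x-y=r e^{\imm\theta}$ in polar coordinates and using the Taylor expansion $\nabla\phi(x)-\nabla\phi(y)=D^2\phi(y)(x-y)+O(r^2)$, one sees that $H_\phi(x,y)$ is bounded but its limit as $r\downarrow 0$ depends on the angle $\theta$, so $H_\phi$ extends to a bounded function on $\R^{2\times 2}$ with a jump discontinuity on $\triangle^2$ whose jump profile is determined only by $D^2\phi(y)$ and the explicit angular structure of $K$, independently of $\phi$ in the sense stated. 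The main (very mild) subtlety is thus keeping track of the symmetrisation step and the singularity cancellation; everything else is routine computation.
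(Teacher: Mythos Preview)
Your proposal is correct and follows exactly the approach the paper indicates: test the transport equation against $\phi$, insert the Biot-Savart representation $u=K\ast\omega$, and symmetrise the resulting double integral using $K(-z)=-K(z)$; the paper then remarks, as you do, that the singularity of $K(x-y)$ at the diagonal is compensated by $\nabla\phi(x)-\nabla\phi(y)$, leaving $H_\phi$ bounded. There is nothing to add.
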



\noindent
Here and in what follows, $\brak{\cdot,\cdot}_X$ denotes 
$L^2(X)$-based duality couplings on a measure space $X$. 
Formulation \eqref{eq:dssymm} can be obtained from the usual weak formulation,
\begin{equation*}
\brak{\phi,\omega_t}_{\R^2}-\brak{\phi,\omega_0}_{\R^2}
=\int_0^t \int_{\T^{2\times 2}} K(x-y)\omega_s(y) \omega_s(x) \nabla\phi(x) dxdyds,
\end{equation*}
by symmetrising in variables $x,y$, keeping in mind that $K$
is a skew-symmetric function. In doing so, the singularity of $K(x-y)$ at the diagonal is compensated by $\nabla\phi(x)-\nabla\phi(y)$, so that $H_\phi$ is a bounded function.

This ``symmetrised'' weak vorticity formulation dates back to the works of Delort and Schochet
\cite{Delort91,Schochet95}, and was introduced to give meaning to Euler's equations
in regimes of low space regularity. Indeed, by interpreting brackets $\brak{\cdot,\cdot}$ 
in a suitable way, empirical measures of point vortices systems actually satisfy this formulation.

Given a configuration $\gamma$, we set
\begin{equation*}
	\gamma\diamond\gamma=\sum_{j\neq k}^N \xi_j \xi_k \delta_{(z_j,z_k)}\in \M(\R^{2\times 2}).
\end{equation*}

\begin{definition}\label{def:pvsol}
	A weakly* continuous curve $I\ni t\mapsto \omega_t\in \Gamma$ 
	is a \emph{point vortices solution}
	of Euler's equations if it satisfies for all $t\in I$
	\begin{equation}\label{eq:weaksol}
		\brak{\phi,\omega_t}-\brak{\phi,\omega_0}
		=\int_0^t \brak{H_\phi,\omega_s\diamond\omega_s}_{\R^{2\times 2}}ds.
	\end{equation}
\end{definition}
\noindent
It is immediate to observe that the curves $\omega_t$ in $\Gamma$ described in \autoref{rmk:extension}
satisfy the latter definition.

\begin{rmk}
	Equation \eqref{eq:weaksol} is in fact equivalent to the continuity equation
	\begin{equation*}
		\partial_t\omega_t+\div(v_t\omega_t)=0,
	\end{equation*}
	in distributional sense, where the vector field
	\begin{equation*}
		v_t(z_j)=\sum_{k\neq j}\xi_j K(z_j-z_k), \quad \text{ if }\quad  \omega_t=\sum_{j = 1}^N\xi_j \delta_{z_j},
	\end{equation*}
	(notice that we only need to give the value of $v_t$ only on the support of $\omega_t$)
	is \emph{not} the full fluid dynamical velocity field $u_t=K\ast \omega_t$,
	which would be singular in the support of $\omega_t$, but only retains at point $z_j$
	the influence of \emph{other} vortices.
\end{rmk}

One might extend the definition to weakly* continuous curves $t\mapsto \mu_t\in\M(\R^2)$,
by replacing the double space integral in \eqref{eq:dssymm} with
\begin{equation}\label{eq:diagmisure}
	\int_{\R^{2\times 2}\setminus \triangle^2} H_\phi(x,y)d\mu_t(x)d\mu_t(y).
\end{equation}
The work \cite{Schochet95} considered measure-valued solutions absolutely continuous
with respect to Lebesgue's measure, whose density is of class $H^1(\R^2)$.
For this kind of measures, the diagonal contribution in the double integral against $H_\phi$
is null, so there is no need of an explicit "renormalisation".
Ignoring the diagonal, ``self interaction'' part is the correct way to interpret
the weak formulation \eqref{eq:dssymm} in the case of point vortices \cite{Schochet96}.
Let us also refer again to results of \cite{Marchioro88} and subsequent works obtaining point vortices systems
as limits of more regular solutions of Euler's equations. 

Discussing whether \eqref{eq:diagmisure} provides a good notion of weak solutions
to Euler's equations in the general measure-valued case is out of the scope of this article.
Let us only remark that an analogous situation is the one of Gaussian invariant measures of Euler's equations:
in that case, $\brak{H_\phi,\omega\otimes\omega}$ has to be interpreted as a double It\=o-Wiener integral:
we refer to \cite{Grotto2020b} for a thorough discussion and a link between that context and the present one.

We have observed that \eqref{eq:dssymm} is satisfied by empirical measures of vortices if
the double space integral against $H_\phi$ neglects the contribution of the diagonal
(where $H_\phi$ is discontinuous), that is, self-interaction of vortices:
as a coherence result let us now show that if we fix the number of particles
\autoref{def:pvsol} is satisfied \emph{only} by the empirical measure of a vortex system.

\begin{prop}\label{prop:weaksolunique}
	Let $N\in\N$ be fixed, and consider a point vortices solution $\omega:I\to \Gamma_N$.
	There exists a solution $I\ni t\mapsto (z_1(t),\dots z_N(t))\in \R^{2 \times N}\setminus \triangle^N$ of \eqref{eq:freevortices}
	with vortices intensities $\xi_1,\dots \xi_N$ such that $\omega_t=\sum_{j =1}^N \xi_j \delta_{z_j(t)}$.
\end{prop}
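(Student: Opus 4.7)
The plan is to extract from the weakly* continuous curve $\omega_t$ a continuous labelling of its atoms, then test \eqref{eq:weaksol} against functions localizing near a single atom to show first that each intensity is constant, and then that each position satisfies \eqref{eq:freevortices}.

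First, the fact that $\omega_t$ stays in the stratum $\Gamma_N$ of $N$ distinct atoms (implicitly with nonzero intensities, as in the introduction) combined with weak* continuity allows a local labelling: around any fixed $t_0 \in I$, pick disjoint open balls $B_j$ around the atoms $z_j(t_0)$; testing against smooth cutoffs of the $B_j$'s together with the constraint that each $\omega_t$ has exactly $N$ atoms, one shows that for $t$ in a neighborhood $V$ of $t_0$ exactly one atom of $\omega_t$ lies in each $B_j$, and both its position $z_j(t)$ and intensity $\xi_j(t)$ depend continuously on $t$. Since the atoms remain pairwise distinct throughout $I$, this local labelling prolongs uniquely along the whole interval.

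To prove $\xi_j$ is constant, fix $j$ and $t_0$ and pick $\phi \in C^\infty_c(\R^2)$ equal to a constant $c$ on an open neighborhood of $z_j(t_0)$ and vanishing near each other $z_k(t_0)$. For $t$ in a sub-interval of $V$, $\brak{\phi, \omega_t} = c\, \xi_j(t)$; simultaneously $\nabla \phi$ vanishes on a neighborhood of every $z_k(s)$, so $H_\phi(z_k(s), z_\ell(s)) = 0$ for all $k \neq \ell$ and hence $\brak{H_\phi, \omega_s \diamond \omega_s} = 0$. Equation \eqref{eq:weaksol} then forces $\xi_j(t) = \xi_j(t_0)$, so each $\xi_j$ is locally and, by connectedness of $I$, globally constant. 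To extract the ODE, I repeat the argument with $\phi(x) = \chi(x)\, x_m$ for $m = 1,2$, where $\chi \in C^\infty_c(\R^2)$ equals $1$ near $z_j(t_0)$ and vanishes near the other atoms: one has $\brak{\phi, \omega_t} = \xi_j (z_j(t))_m$, while only pairs involving $z_j$ contribute to $\brak{H_\phi, \omega_s \diamond \omega_s}$; combining the $(z_j, z_k)$ and $(z_k, z_j)$ contributions via the skew-symmetry of $K$ yields
\[
\brak{H_\phi, \omega_s \diamond \omega_s} = \xi_j \sum_{k \neq j} \xi_k \, e_m \cdot K(z_j(s) - z_k(s)).
\]
Dividing \eqref{eq:weaksol} by $\xi_j \neq 0$ gives the integrated form of \eqref{eq:freevortices} on a neighborhood of $t_0$, which by arbitrariness of $t_0$ holds on all of $I$.

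The main obstacle is the first step: producing a globally consistent continuous labelling of the atoms. This requires using not only weak* continuity but also the hypothesis $\omega_t \in \Gamma_N$ (no collisions, no cancellation creating or destroying atoms), so that the canonical projection from ordered tuples in $(\R^2)^N \setminus \triangle^N$ onto $\Gamma_N$ is a local homeomorphism along which any continuous curve admits a unique continuous lift. Once the labelling is fixed, the two test-function computations above are routine.
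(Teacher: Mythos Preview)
Your proposal is correct and follows essentially the same approach as the paper: localise near each atom with disjoint bump functions to obtain a continuous labelling, test against functions constant near one atom to freeze the intensities, then against coordinate functions times a cutoff to recover the integrated form of \eqref{eq:freevortices}. The only cosmetic difference is that the paper extends the local labelling to all of $I$ by a maximality/contradiction argument rather than by invoking the lifting property of the projection $(\R^2)^N\setminus\triangle^N\to\Gamma_N$, but the two arguments are equivalent.
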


\begin{proof}
	Set $\omega_t=\sum_{j=1}^N \xi_j(t) \delta_{z_j(t)}$.
	Consider $N$ test functions $\phi_j\in C_c^\infty(\R^2)$
	whose disjoint supports are closed balls $B_j(z_j(0),r_j)$ of centres $z_j(0)$ and radii $r_j>0$.
	By weak continuity of $\omega$ there exists a non empty interval $I'\subset I$ such that
	$\brak{\phi_j,\omega_t}>0$ for all $j$ and $t\in I'$.
	In particular, since $\omega_t\in \Gamma_N$, 
	for $t\in I'$, the support of every $\phi_j$ contains one and only one point of
	the support of $\omega_t$:
	since configurations of $\Gamma_N$ are defined up to relabelling,
	we can assume that it is $z_j(t)\in B_j(z_j(0),r_j)$.
	
	Since radii $r_j$ above are arbitrarily small,
	we have that for any $r_j$ small enough there exists a neighbourhood of $t=0$
	for which $\abs{z_j(t)-z_j(0)}\leq r_j$, which means that   
	the map $t\mapsto z_j(t)$ is continuous in $t=0$.
	In fact, the same reasoning can be repeated for any $t\in I'$,
	so $I'\ni t\to z_j(t)\in\R^2$ are continuous functions.
	
	By considering test functions $\phi_j$ taking constant value $1$ on $B(z_j(0),r_j)$
	and $0$ on $B(z_k(0),r_k)$, $k\neq j$, one then shows that $I'\ni t\mapsto \xi_j(t)$
	is constant, since by \eqref{eq:weaksol}
	\begin{align*}
		\xi_j(t)-\xi_j(0)&=\brak{\phi_j,\omega_t}-\brak{\phi_j,\omega_0}\\
		&= \int_0^t\sum_{k\neq h} \xi_h(s) \xi_k(s) \frac{\nabla\phi_j(z_h(s))-\nabla\phi_j(z_k(s))}{2} K(z_h(s)-z_k(s)),
	\end{align*}
	the right-hand side vanishing since $\nabla\phi_j(z_k(s))=0$ for every $k=1,\dots,N$ and $s\in I'$.
	
	Let us now show that points $z_j(t)$ for $t\in I'$ satisfy the point vortices equations.
	By \eqref{eq:weaksol}, considering test functions $\phi_{j,1}$
	such that $\phi_{j,1}(z)=z_1$ (the first component of $z\in\R^2$) on $B(z_j(0),r_j)$
	and $\phi_{j,1}\equiv 0$ on the other $B(z_k(0),r_k)$, $k\neq j$,
	we obtain
	\begin{equation*}
		\xi_j z_{j,1}(t)-\xi_j z_{j,1}(0)
		=\int_0^t \sum_{k \neq j}\xi_j \xi_k K_1(z_j(s)-z_k(s))ds.
	\end{equation*} 
	The analogue can be done for the second component.
	Since points $z_j(t)$ remain far apart for $t\in I'$,
	the above equation actually tells us that they have smooth trajectories,
	and thus they satisfy the differential formulation \eqref{eq:freevortices}.
	
	We have thus proved that the thesis holds for the time interval $I'$:
	let us assume that $I'\subseteq I$ is maximal such that
	for $t\in I'$, $\omega_t=\sum_{j=1}^N \xi_j \delta_{z_j(t)}$
	is the empirical measure of the system of vortices with positions $z_j$ and intensities $\xi_j$
	such that trajectories $z_j(t)$ do not intersect.
	If $I'\neq I$, at a time $\bar t\in \partial I'\cap I$ all limits $\lim_{t \to \bar t} z_j(t)=z_j(\bar t)$
	must exist finite (otherwise violating the hypothesis $\omega_{\bar t}\in\Gamma_N$)
	so either these limits are all distinct --thus we can repeat the argument above
	starting from time $\bar t$ instead of $0$ and extend $I'$, contradicting maximality-- or
	some of them coincide, and in the latter case $\omega_{\bar t}\in \bigcup_{k\leq N-1}\Gamma_k$ contradicts the hypothesis.	
\end{proof}

In contrast to this last result, when the number of vortices is not constrained
we can exploit \autoref{thm:Nvortices} to produce infinitely many solutions.

\begin{thm}
	For any $\gamma=\sum_{j=1}^{N}\xi_j \delta_{z_j}\in\Gamma$ there exist infinite point vortices solutions
	$(-\infty,+\infty)\ni t\mapsto \omega_t \in \Gamma$ in the sense of \autoref{def:pvsol}
	such that $\omega_0=\gamma$.
\end{thm}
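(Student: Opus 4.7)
The plan is to realise $\gamma$ at $t=0$ as the value of a solution built by inserting a \autoref{thm:Nvortices} burst into the free $N$-vortex flow from $\gamma$ at an arbitrary positive time $t_*$; varying $t_*$ will then furnish uncountably many distinct solutions with $\omega_0 = \gamma$. As a preliminary step, one constructs a base solution $\omega^{\mathrm{free}}$ by running the free $N$-vortex dynamics from $\gamma$ on a maximal interval of regularity and iteratively extending through collapse/burst endpoints via \autoref{rmk:extension} (merging colliding vortices into a single vortex with summed intensity, and conversely for bursts). Each regular branch satisfies \eqref{eq:weaksol} since it solves the ODE \eqref{eq:freevortices}; and because $H_\phi$ is bounded on $\R^{2\times 2}$, the integrand $\brak{H_\phi,\omega_s\diamond\omega_s}$ is locally bounded in $s$, so the integral identity passes through singular times by dominated convergence. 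This delivers $\omega^{\mathrm{free}} : \R \to \Gamma$ satisfying \autoref{def:pvsol} with $\omega^{\mathrm{free}}_0 = \gamma$.

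Next, fix a regular time $t_* > 0$ of $\omega^{\mathrm{free}}$ (the set of such times contains the open interval on which the free $N$-vortex flow from $\gamma$ has no collisions, hence a continuum of choices), write $\omega^{\mathrm{free}}_{t_*} = \sum_{k=1}^M \eta_k \delta_{p_k}$, and pick any index $k_0$. By translation invariance we may assume $p_{k_0}=0$; then \autoref{thm:Nvortices}, applied to the remaining configuration $\sum_{k\neq k_0}\eta_k\delta_{p_k}$ with splitting intensity $\eta_{k_0}$, yields on some interval $(t_*,t_*+T)$ a solution of the full $(3+(M-1))$-vortex system in which three vortices of intensities $-\eta_{k_0}/3,\,2\eta_{k_0}/3,\,2\eta_{k_0}/3$ burst out of $p_{k_0}$ while the other $M-1$ emerge from $p_k$. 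Define the stitched curve $\omega^{(t_*)}$ to agree with $\omega^{\mathrm{free}}$ on $(-\infty,t_*]$, to equal the empirical measure of this \autoref{thm:Nvortices} solution on $(t_*,t_*+T)$, and to be extended beyond $t_*+T$ as the free $(M+2)$-vortex flow, itself extended through its own singular times by \autoref{rmk:extension}.

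The boundary conditions $\lim_{t\to t_*^+}z_j(t)=0$ and $\lim_{t\to t_*^+}y_k(t)=p_k$ in \autoref{thm:Nvortices} guarantee $\lim_{t\to t_*^+}\omega^{(t_*)}_t=\omega^{\mathrm{free}}_{t_*}$ in the weak$^*$ sense (the three bursting vortices coalesce into $\eta_{k_0}\delta_{p_{k_0}}$), so $\omega^{(t_*)}$ is weakly$^*$ continuous on $\R$. The identity \eqref{eq:weaksol} holds separately on $(-\infty,t_*]$ and on $[t_*,+\infty)$, and the two add up on $\R$ thanks to the matching at $t_*$. Hence $\omega^{(t_*)}$ is a point vortices solution with $\omega^{(t_*)}_0=\gamma$. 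For any two regular times $t_* < t_{**}$, the solutions $\omega^{(t_*)}$ and $\omega^{(t_{**})}$ differ, because on the interval $(t_*,t_{**})$ one has $M+2$ vortices in the support while the other still has $M$; as $t_*$ ranges over a continuum we obtain uncountably many distinct solutions.

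The main issue is not analytic but organisational: one must check that every gluing --the \autoref{thm:Nvortices} insertion at $t_*$ and the \autoref{rmk:extension} continuations across collapse/burst times of the $N$- and $(M+2)$-vortex free flows before and after-- preserves both weak$^*$ continuity and the integrated identity \eqref{eq:weaksol}. The decisive tool is once more the boundedness of $H_\phi$ from the lemma preceding \autoref{def:pvsol}: it makes $s\mapsto \brak{H_\phi,\omega_s\diamond\omega_s}$ a finite sum of bounded functions of $s$, so the integrals in \eqref{eq:weaksol} are absolutely convergent and additive across any finite collection of stitch points, while weak$^*$ continuity of the empirical measure across each singular time is inherited directly from the matching boundary conditions of \autoref{thm:Nvortices} and \autoref{rmk:extension}.
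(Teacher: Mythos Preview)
Your proposal is correct and follows essentially the same route as the paper: build a base solution by running the free vortex dynamics from $\gamma$ and extending through collapses via \autoref{rmk:extension}, then insert a \autoref{thm:Nvortices} burst at an arbitrary time and observe that distinct insertion times yield distinct solutions. You are somewhat more explicit than the paper about why the weak formulation \eqref{eq:weaksol} and weak$^*$ continuity survive the stitch points (via boundedness of $H_\phi$), which is helpful, and your distinctness argument by counting atoms on $(t_*,t_{**})$ is the same in spirit as the paper's.
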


\begin{proof}
	One solution, arguably the most ``natural'', is obtained by considering the maximal solution of
	\eqref{eq:freevortices} with intensities $\xi_1,\dots,\xi_N$ and positions $z_1,\dots, z_N$
	(the ones prescribed by $\gamma$) at time $t=0$, letting $\omega_t$ be their empirical measure
	and extending it beyond an eventual collapse time (either for positive or negative times) 
	in the fashion of \autoref{rmk:extension}.
	Notice that this kind of coalescence can only happen a finite number of times,
	since each time the number of vortices is reduced:
	this solution possibly has only one vortex for large times $t\to +\infty$ 
	(into which all vortices have collapsed)
	or for $t\to -\infty$ (so vortices at time $t=0$ were all eventually originated by bursts).
	
	Starting from the solution we just described, let us call it $\omega_t$, we can create infinitely many.
	For instance, we can consider the point vortices solution starting from configuration $\gamma=\omega_0$
	in which one of the vortices $\xi\delta_z$ bursts into three vortices of intensity
	$2\xi/3,2\xi/3,-\xi/3$ as in \autoref{thm:Nvortices}:
	such solution exists by our result at least for a small time interval $[0,T]$,
	but we can continue it until a collapse, and beyond that by making eventual collapsing vortices coalesce.
	The empirical measure is a point vortices solution $\tilde\omega_t$ coinciding with $\omega_t$
	for $t\leq 0$, but differing for $t>0$: indeed in a right interval of $t=0$ the number of
	vortices differ.
	
	In fact, the very same procedure can be applied starting from $\omega_t$ and
	imposing a burst of one of the existing vortices at any other time $t\neq 0$.	
\end{proof}

\subsection{Random Bursts of Vortices and Intrinsic Stochasticity}\label{ssec:intrinsic}
By exploiting non uniqueness of point vortices solutions due to bursts and collapses,
one can produce a $\Gamma$-valued stochastic process whose trajectories
are all point vortices solutions taking the same value $\gamma\in\Gamma$ at time $t=0$.
Here, we limit ourselves to describe one possible construction of this kind,
leaving applications to future works.

Let us thus fix $\gamma\in\Gamma$, and consider a Poisson process $P_\lambda$ of parameter $\lambda>0$,
whose jump times we denote by $t_1,t_2,\dots\in \R^+$, defined on a probability space $(\Omega,\F,\PP)$.
We then define a stochastic process $t\mapsto \omega_t\in\Gamma$ as follows:
\begin{itemize}
	\item on $[0,t_1]$ we consider the solution of point vortices dynamics \eqref{eq:freevortices}
	starting from configuration $\gamma=\sum_{j = 1}^N\xi_j \delta_{z_j}$,
	continuing it beyond eventual collapses by merging vortices, as described above;
	\item let $\omega_{t_1}=\sum_{j = 1}^{N_{t_1}}\xi_j \delta_{z_j(t)}$,
	and (possibly changing the probability space), 
	consider a uniform random $n_1$ variable on $\set{1,\dots, N_{t_1}}$ independent of $P_\lambda$:
	by \autoref{thm:Nvortices} we can consider a solution of \eqref{eq:freevortices}
	with $N_{t_1}+2$ vortices on some small time interval $(t_1,t_1+\delta)$,
	in which for $t\downarrow t_1$ three vortices burst out of $\xi_{n_1}\delta_{z_{n_1}}$;
	the empirical measure of such burst can be the continued until $t_2$ eventually
	merging collapsing vortices;
	\item the above point is repeated for all successive jump times,
	enriching the probability space with new uniform variables on indices of existing vortices
	all independent from previously defined variables.	
\end{itemize}
It is then easy to show that
$[0,\infty)\ni t\mapsto \omega_t\in\Gamma$ thus defined is a Markov process,
and that every sample of it is a point vortices solution of Euler's equation in
the sense of \autoref{def:pvsol}, with $\omega_0=\gamma$.
We might call processes of this kind \emph{intrinsically stochastic} weak solutions
of Euler's equation, or \emph{Markov selections} of weak solutions.

\subsection{Energy Dissipation in Bursts of Vortices}
We conclude our discussion by observing that
burst of vortices we constructed in \autoref{thm:Nvortices} actually \emph{dissipate} energy.
Let us first recall what energy is in the point vortex context.
The usual definition, that is the $L^2$ norm of the velocity field induced by vorticity
$\omega=\sum_{j=1}^N \xi_j \delta_{z_j}$, is not viable:
since $K(x)=-\nabla^\perp(-\Delta)^{-1}$,
a formal integration by parts gives
\begin{align*}
	\int_{\R^2} |u(x)|^2 dx
	&=\int_{\R^2} (K\ast \omega)(x)\cdot (K\ast \omega)(x) dx
	=-\frac{1}{2\pi}\sum_{j,k=1}^N \xi_j\xi_k \log |z_j-z_k|,
\end{align*}
which is a divergent expression. Instead, one must consider
\begin{equation*}
	H(\omega)=-\frac{1}{2\pi}\sum_{j\neq k} \xi_j\xi_k \log |z_j-z_k|,
\end{equation*}
which in fact is the Hamiltonian function of Hamiltonian system \eqref{eq:freevortices}
in conjugate coordinates $(z_{j,1},\xi_j z_{j,2})_{j=1,\dots,N}$.
In other words, one neglects the diverging self-interaction terms,
and only considers interaction between different vortices.
In the notation of \autoref{ssec:configspace}, the energy of a configuration $\gamma\in\Gamma$
is thus suggestively expressed by
\begin{equation*}
	H(\gamma)=\brak{G,\gamma\diamond\gamma}, \quad G(x,y)=-\frac1{2\pi}|x-y|.
\end{equation*}

\begin{prop}
	In the notation of \autoref{thm:Nvortices}, consider for $t\in [0,T]$
	\begin{equation*}
		\omega_t=\sum_{\ell=1}^3\xi_\ell \delta_{z_\ell(t)}+\sum_{k=1}^{N}\zeta_k\delta_{y_k(t)},
	\end{equation*}
	the empirical measure of the vortex burst. Then, $H(\omega_t)$ is constant for $t>0$,
	but $H(\omega_0)>H(\omega_t)$, $t>0$.
\end{prop}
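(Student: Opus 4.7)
The plan is to split the proof into conservation for $t > 0$ and the jump at $t = 0$. For the first part, on $(0,T]$ the pair $(z,y)$ solves the standard Hamiltonian system \eqref{eq:freevortices} with $N+3$ distinct vortices of intensities $\xi_1,\xi_2,\xi_3,\zeta_1,\ldots,\zeta_N$, so $H(\omega_t)$ is constant there as the Hamiltonian of a smooth ODE. It thus suffices to prove the strict inequality $H(\omega_0) > \lim_{t \to 0^+} H(\omega_t)$.

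To identify this limit, I would decompose
\begin{equation*}
H(\omega_t) = -\frac{1}{2\pi}\sum_{j\neq k\in\{1,2,3\}}\xi_j\xi_k\log|z_j(t)-z_k(t)| -\frac{1}{\pi}\sum_{j,k}\xi_j\zeta_k\log|z_j(t)-y_k(t)| -\frac{1}{2\pi}\sum_{h\neq k}\zeta_h\zeta_k\log|y_h(t)-y_k(t)|.
\end{equation*}
Since $y_k(t)\to y_k(0)\neq 0$ and $z_j(t)\to 0$, the third sum converges to its value at $t=0$, and the second converges to $-\pi^{-1}(\sum_j\xi_j)\sum_k\zeta_k\log|y_k(0)| = -(\xi/\pi)\sum_k\zeta_k\log|y_k(0)|$, which coincides exactly with the cross contribution to $H(\omega_0)=H(\xi\delta_0+\sum_k\zeta_k\delta_{y_k(0)})$. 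For the first (internal) sum, I would invoke the fixed-point construction of \autoref{sec:burstinexternal}: on $\U_T$ one has $r^2(t)=2at+O(t^{3/2})$, $|x_j(t)|\leq t$, and $z_j(t)=r(t)e^{i\theta(t)}(a_j+O(t))$, whence $\log|z_j(t)-z_k(t)| = \tfrac{1}{2}\log(2at) + \log|a_j-a_k| + o(1)$. The diverging $\log t$ contribution is killed by the collapse condition $\sum_{j\neq k}\xi_j\xi_k=0$ from \eqref{eq:condcollapse}, leaving
\begin{equation*}
H(\omega_0) - \lim_{t\to 0^+} H(\omega_t) = \frac{1}{2\pi}\sum_{j\neq k}\xi_j\xi_k\log|a_j-a_k| = \frac{2\xi^2}{9\pi}\log\frac{|a_2-a_3|^2}{|a_1-a_2|\,|a_1-a_3|},
\end{equation*}
using $\xi_1=-\xi/3$, $\xi_2=\xi_3=2\xi/3$ in the last equality.

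It remains to show that the argument of the logarithm exceeds $1$. The vanishing of the moment of inertia $I=0$ in \eqref{eq:condcollapse}, evaluated on the self-similar coefficients $(a_j)$, reads $\xi_1\xi_2|a_1-a_2|^2+\xi_1\xi_3|a_1-a_3|^2+\xi_2\xi_3|a_2-a_3|^2=0$, which for our intensities simplifies to
\begin{equation*}
|a_1-a_2|^2 + |a_1-a_3|^2 = 2|a_2-a_3|^2.
\end{equation*}
Together with AM--GM this immediately gives $|a_2-a_3|^2\geq |a_1-a_2|\,|a_1-a_3|$, with equality only if $|a_1-a_2|=|a_1-a_3|$, in which case the same relation forces the triangle $(a_1,a_2,a_3)$ to be equilateral.

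The hardest step is ruling out the equilateral case, and this I would do directly: the vanishing of the centre of vorticity $C=0$ for a self-similar burst yields $a_1=2(a_2+a_3)$, so an equilateral configuration must be of the form $a_1=4m$, $a_{2,3}=m\pm i\sqrt{3}\,m$ for some $m\in\mathbb{C}^*$. Substituting into \eqref{eq:asrelation} for $j=1$ gives $\xi/(3m) = 8\pi i\bar m(a-ib)$, whose real part forces $a=0$. In other words the equilateral three-vortex configuration is a relative equilibrium (it only rotates, it does not expand or contract), and hence is incompatible with the condition $a>0$ provided by \autoref{lem:existselfsimilar}. Therefore the inequality $|a_2-a_3|^2 > |a_1-a_2|\,|a_1-a_3|$ is strict, the logarithm is strictly positive, and we conclude $H(\omega_0) > H(\omega_t)$ for every $t\in(0,T]$.
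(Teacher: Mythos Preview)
Your argument is correct, and it takes a genuinely different route from the paper. Both proofs agree on the reduction: conservation of $H$ for $t>0$ is just the Hamiltonian nature of the smooth $(N+3)$-vortex flow, the $y$--$y$ and $z$--$y$ pieces of $H(\omega_t)$ converge to the corresponding pieces of $H(\omega_0)$, so the whole question is the sign of
\[
\frac{1}{2\pi}\sum_{j\neq k}\xi_j\xi_k\log|a_j-a_k|
=\frac{2\xi^2}{9\pi}\log\frac{|a_2-a_3|^2}{|a_1-a_2|\,|a_1-a_3|}.
\]
From here the approaches diverge. The paper simply plugs in the explicit parameters $a_1=-2+2i\sqrt3$, $a_2=-2+i\sqrt3$, $a_3=1$ fixed in the Appendix, obtaining $\tfrac{\xi^2}{9}(\log 3+\log 21-2\log 12)<0$ for $H'-H$. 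Your argument is structural: you use the moment-of-inertia identity $\sum_{j\neq k}\xi_j\xi_k|a_j-a_k|^2=0$ (valid for any self-similar burst) to get $|a_1-a_2|^2+|a_1-a_3|^2=2|a_2-a_3|^2$, then AM--GM, and finally exclude the equilateral equality case by checking that it forces $a=0$ in \eqref{eq:asrelation}, i.e.\ a pure rotation rather than a burst. This last point is the classical fact that an equilateral triple of vortices is a relative equilibrium, and your computation for $j=1$ recovers it. The upshot: the paper's proof is shorter because it already has the numbers at hand, while yours is more conceptual and shows the energy drop holds for \emph{every} self-similar burst with intensities $(-\xi/3,2\xi/3,2\xi/3)$, not only the particular one selected in \autoref{lem:existselfsimilar}.
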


\begin{proof}
	Since $z_1(0)=z_2(0)=z_3(0)=0$, at time $t$ they sum up to a unique vortex with position $0$
	and intensity $\xi_1+\xi_2+\xi_3=\xi$. We thus have
	\begin{equation*}
		H= H(\omega_0) = -\frac1{2\pi} \sum_{k\neq \ell}^N \zeta_k \zeta_\ell \log \left| y_k(0) - y_\ell(0) \right| 
		-\frac1{2\pi} \sum_{k = 1}^N \zeta_k \xi \log \left| y_k(0) \right|
		=H_1 + H_2. 
	\end{equation*}
	For $t\in (0,T]$, we are dealing with a solution of \eqref{eq:freevortices}
	without singularities, so the Hamiltonian is conserved, and we have, for $t>0$,
	\begin{align*}
	H' = H(\omega_t) &= 
	-\frac1{2\pi} \sum_{k\neq \ell}^N \zeta_k \zeta_\ell \log \abs{y_k(t) - y_\ell(t)}
	-\frac1{2\pi} \sum_{k=1}^N\sum_{\ell=1}^3 \zeta_k \xi_j \log \abs{y_k(t) - z_j(t)}\\
	&\quad -\frac1{2\pi} \sum_{j\neq \ell}^3 \xi_j \xi_h \log \left|  z_j(t)- z_h(t) \right|
	= H'_1(t) + H'_2(t) + H'_3(t).
	\end{align*}
	Let us stress the fact that although the summands $H'_1(t)$, $H'_2(t)$, $H'_3(t)$ 
	may depend on $t \in (0,T)$, the total interaction energy $H'$ is time-independent.
	
	It is now easy to check that $\lim_{t \to 0} H'_1(t) = H_1$ and $\lim_{t \to 0} H'_2(t) = H_2$, so
	\begin{align*}
	H' - H = \lim_{t \to 0} H'_3(t).
	\end{align*}
	Using the relations $\xi_1 = -\frac{1}{3} \xi$, $\xi_2=\xi_3=\frac{2}{3} \xi $,  and $z_j(t) = z_1(t) \left( x_j(t) + \frac{a_j}{a_1 }\right)$, $j=2,3,$ we rewrite
	\begin{align*}
	H'_3(t)
	&= 
	\frac{2\xi^2}{9}  
	\log \left| a_1 x_2(t) + a_2 - a_1  \right|
	+ \frac{2\xi^2}{9} 
	\log \left|  a_1 x_3(t) + a_3 - a_1  \right|
	\\
	&\quad
	- \frac{4\xi^2}{9}
	\log \left|  a_1 x_2(t) - a_1 x_3(t) + a_2-a_3  \right|,
	\end{align*}
	and therefore, by using the explicit choice of numbers $a_i$ made in the proof
	of \autoref{thm:Nvortices} (see \autoref{appendix}), we have
	\begin{align*}
	H' - H &=
	\frac{2 \xi^2}{9}
	\left( 
	\log \left| a_2 - a_1  \right|
	+ \log \left|  a_3 - a_1  \right|
	-2 \log \left|  a_2-a_3  \right| \right)
	\\
	&=
	\frac{\xi^2}{9}
	\left( 
	\log 3 + \log 21 - 2 \log 12 \right) < 0.\qedhere
	\end{align*}	
\end{proof}

As a consequence, if we continue the point vortices evolution in a small left neighbourhood of $t=0$,
by considering the three vortices in $0$ at $t=0$ as a single one before that time,
we have produced a point vortices solution whose energy is discontinuous
at $t=0$, and it decreases following a burst.
By reversing time, this corresponds to an energy increase in correspondence of a point vortices collapse.
In any case, arguments on energy balance --that is, asking where does dissipated energy goes--
are difficult to set up, since it does not seem meaningful to compare the
infinite self-interaction energy of the bursting vortex to the finite loss of energy after burst.

The latter result does not prove that \emph{any} burst or collapse of vortices
provokes a discontinuity in the otherwise constant energy,
the computation relying on the explicit construction of previous sections.
Nevertheless, we might ask whether point vortices solutions $t\mapsto \omega_t$
in the sense of \autoref{def:pvsol} are unique if we further impose that
$t\mapsto H(\omega_t)$ is constant.
Since conservation of energy $H$ does not prevent vortex collapse or burst
--indeed, the dissipation we observe is a consequence of our choice
in continuing the solution for $t\leq 0$--
it is not clear how to apply energy conservation,
so we leave this question open for future works.

\appendix

\section{Proofs of Technical Lemmas}\label{appendix}

\subsection*{Proof of \autoref{lem:existselfsimilar}}

First of all, we discuss the existence of  suitable parameters $a>0$, $b\in\R$, $a_1,a_2,a_3\in\C$ as in the statement of the lemma.
The three equations of \eqref{eq:asrelation} (for $j=1,2,3$)
are not linearly independent, and setting for brevity $\chi=\frac{6\pi i}{\xi}(a-ib)$,
they provide two independent relations between $a_1,a_2,a_3$:
\begin{align*}
	\chi\bar a_2 &=\pa{\frac2{a_2-a_3}-\frac1{a_2-a_1}},\\
	\chi\bar a_3 &=\pa{\frac2{a_3-a_2}-\frac1{a_3-a_1}}.
\end{align*}
We can reduce a variable by imposing that the centre of vorticity is $0=\xi_1z_1+\xi_2z_2+\xi_3z_3$,
since this implies, for our choice of intensities, that $a_1=2(a_2+a_3)$.
Simple passages then lead to an equation involving only $a_2,a_3$:
\begin{equation*}
	|a_2|^2+|a_3|^2+4\re (\bar a_2 a_3)=0.
\end{equation*}
Solutions $a_2,a_3$ to this last equation are of course not unique.
From now on, since our aim is only \emph{existence} of a set of parameters $a,b,a_1,a_2,a_3$
as in the statement, we can impose further conditions to reduce computations to particular,
amenable cases.

Let us thus impose that $a_3=\lambda\in\R$ is real (in fact this is without loss of generality,
by rotation invariance of the system), so that if $a_2=A+iB$ we want to solve
\begin{equation*}
	\lambda^2+A^2+B^2+4\lambda A=0,
\end{equation*}
and from relations above given such $\lambda,A,B$ one can retrieve $a_1$ and
\begin{equation*}
	\chi=\frac{6\pi i}{\xi}(a-ib)=-\frac{3}{\lambda}\cdot \frac{A+i B+\lambda}{(A+i B-\lambda)(2A+2iB+\lambda)}.
\end{equation*}
This last relation is important: since $a>0$, if $\lambda,A,B$ can be chosen so that the imaginary part of $\chi$
can take any value in $\R\setminus\set{0}$, so will $\xi$, as we are requiring.
Elementary computations reveal that the following two sets of parameters
\begin{gather*}
	\xi>0,\,a=\frac{\sqrt{3}}{84\pi}\xi,\,b=\frac{5}{84\pi}\xi,\,
	a_1=-2+i2\sqrt 3,\,a_2=-2+i\sqrt 3,\,a_3=1;\\
	\xi<0,\,a=-\frac{\sqrt{3}}{84\pi}\xi,\,b=\frac{5}{84\pi}\xi,\,
	a_1=2+i2\sqrt 3,\,a_2=2+i\sqrt 3,\,a_3=-1;\\
\end{gather*}
satisfy the statement of the Lemma, allowing a choice of $a,b,a_1,a_2,a_3$ for any $\xi\in \R\setminus\set{0}$. 

Concerning H\"older regularity of such a solution, for $t>s>0$,
\begin{align*}
|Z(t)-Z(s)| \leq &
|Z(t) - \sqrt{2as}\,e^{i \frac{b}{2a} \log t}| + 
|\sqrt{2as}\,e^{i \frac{b}{2a} \log t}-Z(s)|
\\
= &
\sqrt{2at} - \sqrt{2as} 
+
\sqrt{2as}\,
|e^{i \frac{b}{2a} \log t}-e^{i \frac{b}{2a} \log s}|
\\
\leq &
\sqrt{2a(t-s)}
+
\sqrt{2as}\,
\frac{|b|}{2a}\left( \log t - \log s \right)
\\
=&
\sqrt{2a(t-s)}
+
\sqrt{2as}\,
\frac{|b|}{2a}
\int_s^t \frac{dr}{r}
\leq 
\sqrt{2a(t-s)}
+
\sqrt{2a}\,
\frac{|b|}{2a}
\int_s^t \frac{dr}{r^{1/2}}
\\
= &
\sqrt{2a(t-s)}
+
\sqrt{2a}\frac{|b|}{2a} \frac{\sqrt{t}-\sqrt{s}}{2}
\leq 
\sqrt{2 a} \left(1+\frac{|b|}{4a}\right) \sqrt{t-s},
\end{align*} 
concluding the proof.
 
\subsection*{Proof of \autoref{lem:changeofcoor}: Expansions and Leading Orders}
Let the notation established in \autoref{sec:burstinexternal} prevail.
In order to express the rational functions of $z_1,z_2,z_3$ in terms of the new coordinates,
we will make use of elementary Taylor expansions.

Let us consider 
\begin{equation*}
Q_j(x)=\frac1{1-x-{a_j}/{a_1}},
\quad x \in \C, \quad j=2,3,
\end{equation*}
for small values of $|x|$. Since $a_j \neq a_1$, $Q_j$ is holomorphic for $|x|<\rho_j= |1-a_j/a_1|/2$,
and for the latter values $Q_j$ has a convergent power series expansion $Q_j(x)=\sum_{n=0}^{\infty} c_{j,n} x^n$.
Remainders of the series,
\begin{equation}\label{eq:taylorQk}
R_{j,m+1}(x)
=Q_j(x) - \sum_{n=0}^{m} c_{j,n} x^n 
=\sum_{n=m+1}^{\infty} c_{j,n} x^n,
\end{equation}
are holomorphic functions satisfying, for $|x|<\rho_j / 2$,
\begin{equation} \label{eq:estRk}
|R_{j,m+1}(x)| \leq 2 \rho_j^{-m-2} |x|^{m+1},\quad 
|R'_{j,m+1}(x)| \leq C_m \rho_j^{-m-2}|x|^{m},
\end{equation}
$C_m>0$ depending only on $m$. Similar estimates hold for the functions
\begin{align*}
Q_{j,k}(x) = \frac1{x+\frac{a_j}{a_1}- \frac{a_k}{a_1}}, \quad |x|<|a_j-a_k|/{4|a_1|},
\end{align*}
we denote by $R_{j,k,m}$ remainders of $Q_{j,k}$.

The dynamics of $r$ and $\theta$ are the simpler ones. For the former we have
\begin{align*}
\frac{d}{dt}(r^2)
=&\frac{1}{|a_1^2|} \frac{d}{dt}(z_1 \bar{z}_1)
=\frac{2}{|a_1^2|} \text{Re} \pa{\frac{z_1}{2\pi i} \sum_{k \neq 1}
	\frac{\xi_k}{z_1-z_k} + z_1 f(t,z_1)}
\\
=&
\frac{2}{|a_1^2|} 
\text{Re} \left(
\frac{1}{2\pi i} \sum_{k \neq 1}
\frac{\xi_k}{1-x_k-\frac{a_k}{a_1}} \right)
+
\frac{2}{|a_1^2|} 
\text{Re} \left(
z_1 f(t,z_1) \right).
\end{align*}
Recalling \eqref{eq:taylorQk} and making use of \eqref{eq:asrelation}, we rewrite the expression above as 
\begin{align*}
\frac{d}{dt}(r^2)
=&
\frac{2}{|a_1^2|} 
\text{Re} \pa{
	\frac{1}{2\pi i} \sum_{k \neq 1}
	\frac{a_1\xi_k}{a_1-a_k}
	+
	\frac{1}{2\pi i} \sum_{k \neq 1}
	R_{k,1}(x_k) }
+
\frac{2}{|a_1^2|} 
\text{Re} \left(
z_1 f(t,z_1) \right)
\\
=&
2a +
\frac{2}{|a_1^2|} 
\text{Re} \pa{
	\frac{1}{2\pi i} \sum_{k \neq 1}
	R_{k,1}(x_k) }
+
\frac{2}{|a_1^2|} 
\text{Re} \left(
z_1 f(t,z_1) \right)
\\
=&
2a +
\omega_{r}(x_2,x_3) +
\frac{2}{|a_1^2|} 
\text{Re} \left(
z_1 f(t,z_1) \right),
\end{align*}
where by \eqref{eq:estRk}, there exist $C,\rho'>$ depending only on $\xi,a_1,a_2,a_3$ such that
\begin{align*}
\forall |x_2|, |x_3| < \rho',\quad 
|\omega_{r}(x_2,x_3)| \leq C \left(|x_2| + |x_3|\right),\quad
|\nabla \omega_{r}(x_2,x_3)| \leq C.
\end{align*}
By an analogous computation,
\begin{align*}
\frac{d}{dt}\theta&=-i \frac{d}{dt} \log\pa{\frac{z_1}{a_1 r}}
=-i\frac{r}{z_1} \frac{d}{dt}\pa{\frac{z_1}{r}}\\
&=\im \pa{\frac{\dot z_1}{z_1}-\frac{\dot r}{r}}=\im \pa{\frac{\dot z_1}{z_1}}\\
&=-\im \pa{ \frac{1}{2\pi i \bar z_1} \sum_{k \neq 1}
	\frac{\xi_k}{z_1-z_k} }+
\im\pa{ \frac{\overline{f(t,z_1)}}{z_1} }\\
&=\frac{b}{r^2}
-\im \pa{ \frac{|a_1|^2}{2\pi i r^2} \sum_{k \neq 1}
	R_{k,1}(x_k)}
+\im\pa{ \frac{\overline{f(t,z_1)}}{z_1} }
\\
&=\frac{b}{r^2}
+ \frac{\omega_{\theta}(x_2,x_3)}{r^2}
+\im\pa{ \frac{\overline{f(t,z_1)}}{z_1} },	
\end{align*}
where we have used the fact that $\re\pa{\frac{\dot z_1}{z_1}}=\frac{\dot r}{r}$
(which is easily verified by the definitions). Here, again by \eqref{eq:estRk}, $\omega_{\theta}$ satisifies
\begin{align*}
\forall |x_2|, |x_3| < \rho', \quad 
|\omega_{\theta}(x_2,x_3)| \leq C \left(|x_2| + |x_3|\right),\quad
|\nabla \omega_{\theta}(x_2,x_3)| \leq C,
\end{align*}
possibly reducing constants $C,\rho'>0$ depending again only on $\xi,a_1,a_2,a_3$.

The evolution of $x_j$, $j=2,3,$ requires more care. It holds
\begin{align*}
\frac{d}{dt} x_j &= \frac{1}{z_1^2}\pa{z_1 \frac{d}{dt} z_j - z_j \frac{d}{dt} z_1}\\
&=\frac{1}{z_1} \overline{\pa{\frac{1}{2\pi i} \left(
		\frac{\xi_k}{z_j-z_k}+
		\frac{\xi_1}{z_j-z_1} \right)+ f(t,z_j)}}\\
&\qquad-\frac{z_j}{z_1^2}\overline{\pa{\frac{1}{2\pi i} \sum_{\ell \neq 1}
		\frac{\xi_\ell}{z_1-z_\ell} + f(t,z_1)}}.
\end{align*}
Let us first study the part of vector field due to vortices interactions:
we have
\begin{align*}
\frac{1}{2\pi i}\pa{\frac{\xi_k}{z_j-z_k}+ \frac{\xi_1}{z_j-z_1}}
=&
\frac{1}{2\pi i z_1} \left(
\frac{\xi_k}{x_j-x_k + \frac{a_j}{a_1}- \frac{a_k}{a_1}}+
\frac{\xi_1}{x_j+ \frac{a_j}{a_1} -1} \right)
\\
=&
\frac{1}{2\pi i z_1} \left(
Q_{j,k}(x_j-x_k) - \frac{\xi_1}{\xi_j} Q_j(x_j)\right).
\end{align*}
Therefore,
\begin{align*}
&\frac{1}{2\pi i}\pa{\frac{\xi_k}{z_j-z_k}+ \frac{\xi_1}{z_j-z_1}}\\
& \quad=
\frac{1}{2\pi i z_1} \left(
\frac{a_1 \xi_k}{a_j- a_k}
-\frac{a_1^2 \xi_k}{(a_j- a_k)^2}(x_j-x_k)
+R_{j,k,2}(x_j-x_k)\right.\\
&\qquad \left. +\frac{a_1 \xi_1}{a_j- a_1}-
\frac{a_1^2 \xi_1}{(a_j- a_1)^2}x_j
-\frac{\xi_1}{\xi_j}R_{j,2}(x_k)\right)
\\
&=
\frac{1}{2\pi i z_1} \left(
2\pi i \overline{a_j}a_1(a-i b)-
\frac{a_1^2 \xi_k}{(a_j- a_k)^2}(x_j-x_k)-
\frac{a_1^2 \xi_1}{(a_j- a_1)^2}x_j \right.\\
&\qquad \left.
+R_{j,k,2}(x_j-x_k) -\frac{\xi_1}{\xi_j}R_{j,2}(x_k)\right),
\end{align*}
hence
\begin{align*}
\frac{1}{z_1}\overline{\frac{1}{2\pi i} \left(
	\frac{\xi_k}{z_j-z_k}+
	\frac{\xi_1}{z_j-z_1} \right)}
=&
\frac{a_j}{a_1}\frac{|a_1|^2}{|z_1|^2}  (a+ib)
\\
&+
\frac{1}{2\pi i |z_1|^2} \overline{\left(
	\frac{a_1^2 \xi_k}{(a_j- a_k)^2}(x_j-x_k)+
	\frac{a_1^2 \xi_1}{(a_j- a_1)^2}x_j \right)}
\\
&+
\frac{1}{2\pi i |z_1|^2} \overline{\left(
	R_{j,k,2}(x_j-x_k) -
	\frac{\xi_1}{\xi_j}R_{j,2}(x_k) \right)}.
\end{align*}
We move now to the contribution given by $\dot{z}_1$:
\begin{align*}
\frac{1}{2\pi i} \sum_{\ell \neq 1}
\frac{\xi_\ell}{z_1-z_\ell}
=&
\frac{1}{2\pi i z_1}  \left(
\frac{\xi_k}{1-x_k- \frac{a_k}{a_1}}+
\frac{\xi_j}{1-x_j- \frac{a_j}{a_1}} \right)\\
=&
\frac{1}{2\pi i z_1}
\left(
\frac{a_1 \xi_j}{a_1- a_j} + 
\frac{a_1^2 \xi_j}{(a_1- a_j)^2}x_j 
+
\frac{a_1 \xi_k}{a_1- a_k} + 
\frac{a_1^2 \xi_k}{(a_1- a_k)^2}x_k\right.\\
&\quad \left.
+
R_{j,2}(x_j) +
R_{k,2}(x_k)
\right)
\\
=&
\frac{1}{2\pi i z_1}
\left(
2 \pi i |a_1|^2 (a-ib) + 
\frac{a_1^2 \xi_j}{(a_1- a_j)^2}x_j 
+ 
\frac{a_1^2 \xi_k}{(a_1- a_k)^2}x_k\right.\\
&\quad \left.
+
R_{j,2}(x_j) +
R_{k,2}(x_k)
\right),
\end{align*}
therefore
\begin{multline*}
-\frac{z_j}{z_1^2} \overline{
	\left(\frac{1}{2\pi i} \sum_{\ell \neq 1}
	\frac{\xi_\ell}{z_1-z_\ell} \right)}
=
-\frac{z_j}{z_1}\frac{|a_1|^2}{|z_1|^2}  (a+ib)
\\
+\frac{z_j}{z_1} \frac{1}{2\pi i |z_1|^2} \overline{\left(
	\frac{a_1^2 \xi_j}{(a_1- a_j)^2}x_j 
	+ 
	\frac{a_1^2 \xi_k}{(a_1- a_k)^2}x_k
	+
	R_{j,2}(x_j) +
	R_{k,2}(x_k)
	\right)}.
\end{multline*}

All in all, we get
\begin{align*}
\frac{d}{dt} x_j
=&
- x_j \frac{a+ib}{r^2}
\\
&+
\frac{1}{2\pi i |z_1|^2} \overline{\left(
	\frac{a_1^2 \xi_k}{(a_j- a_k)^2}(x_j-x_k)+
	\frac{a_1^2 \xi_1}{(a_j- a_1)^2}x_j \right)}
\\
&+
\left( x_j + \frac{a_j}{a_1} \right) \frac{1}{2\pi i |z_1|^2} \overline{\left(
	\frac{a_1^2 \xi_j}{(a_1- a_j)^2}x_j 
	+ 
	\frac{a_1^2 \xi_k}{(a_1- a_k)^2}x_k
	\right)}
\\
&+
\frac{1}{2\pi i |z_1|^2} \overline{\left(
	R_{j,k,2}(x_j-x_k) -
	\frac{\xi_1}{\xi_j}R_{j,2}(x_k)\right)}
\\
&+
\left( x_j + \frac{a_j}{a_1} \right) \frac{1}{2\pi i |z_1|^2} \overline{\left(
	R_{j,2}(x_j) +
	R_{k,2}(x_k)
	\right)}
\\
&+\frac{1}{z_1} \overline{f(t,z_j)}
-\frac{z_j}{z_1^2}\overline{f(t,z_1)}.
\end{align*}
\noindent
We arrive at the expression in \autoref{lem:changeofcoor}, that is 
\begin{equation*}
\frac{d}{dt}x_j =
\frac{L_{j}(x_2,x_3,\overline{x_2},\overline{x_3})}{r^2} +
\frac{\omega_j(x_2,x_3,x_2-x_3)}{r^2}
+\frac{1}{z_1} \overline{f(t,z_j)}
-\frac{z_j}{z_1^2}\overline{f(t,z_1)}.
\end{equation*}
by collecting linear terms into $L_2,L_3$
and the remainders $R$ into holomorphic functions $\omega_j$ satisifying for all $|x_2|, |x_3|, |x_2-x_3| < \rho'$
\begin{gather*}
|\omega_{j}(x_2,x_3,x_2-x_3)| \leq C \left(|x_2|^2 + |x_3|^2\right),\\
|\nabla \omega_{j}(x_2,x_3,x_2-x_3)|\leq C
\left(|x_2| + |x_3|\right),
\end{gather*}
where we redefined for the last time constants $C,\rho'>0$ depending only on $\xi,a_1,a_2,a_3$.
Notice that the proof has used, up to this point,
only the fact that parameters $a,b,a_1,a_2,a_3$ satisfy \eqref{eq:asrelation}
(rather than the particular choice in the end of proof of \autoref{lem:existselfsimilar}).

\subsection*{Proof of \autoref{lem:changeofcoor}: Eigenvalues}
We are left to prove the statement on eigenvalues of the matrix
\begin{equation*}
L   = \pa{L_2,L_3,\bar L_2,\bar L_3}
	=\pa{\begin{array}{cccc}
	-a-i b 	& 	0 	& L_{13} & L_{14}  	\\
	0 	& 	-a-i b 	& L_{23} & L_{24}  	\\
	\overline{L_{13}} & \overline{L_{14}} & -a + i b & 0						\\
	\overline{L_{23}} & \overline{L_{24}} & 0 	& -a + i b 
	\end{array}},
\end{equation*}
where
\begin{align*}
L_{13} =&
\frac{1}{2\pi i |a_1|^2}
\overline{
	\left(
	\frac{a_1^2 \xi_3}{(a_2-a_3)^2}+
	\frac{a_1^2 \xi_1}{(a_2-a_1)^2}+
	\frac{\overline{a_2}}{\overline{a_1}}\frac{a_1^2 \xi_2}{(a_1-a_2)^2}
	\right)}, \\
L_{14} =&
\frac{1}{2\pi i |a_1|^2}
\overline{
	\left(
	-\frac{a_1^2 \xi_3}{(a_2-a_3)^2}+
	\frac{\overline{a_2}}{\overline{a_1}}\frac{a_1^2 \xi_3}{(a_1-a_3)^2}
	\right)}, \\
L_{23} =&
\frac{1}{2\pi i |a_1|^2}
\overline{
	\left(
	-\frac{a_1^2 \xi_2}{(a_3-a_2)^2}+
	\frac{\overline{a_3}}{\overline{a_1}}\frac{a_1^2 \xi_2}{(a_1-a_2)^2}
	\right)}, \\
L_{24} =&
\frac{1}{2\pi i |a_1|^2}
\overline{
	\left(
	\frac{a_1^2 \xi_2}{(a_3-a_2)^2}+
	\frac{a_1^2 \xi_1}{(a_3-a_1)^2}+
	\frac{\overline{a_3}}{\overline{a_1}}\frac{a_1^2 \xi_3}{(a_1-a_3)^2}
	\right)}.
\end{align*}
The eigenvalues of $L$ coincide with the roots of its characteristic polynomial,
which is given by
\begin{align*}
p(\lambda) &= y^2 - y c_1 + c_2,\\
y  &=(-a-i b -\lambda)(-a+i b -\lambda)
= (a+\lambda)^2 + b^2,\\
c_1&=L_{23}\overline{L_{14}} + 
L_{24}\overline{L_{24}} +
L_{13}\overline{L_{13}} +
L_{14}\overline{L_{23}}, \\
c_2&=L_{13}\overline{L_{13}}L_{24}\overline{L_{24}}+
L_{23}\overline{L_{23}}L_{14}\overline{L_{14}} -
L_{14}\overline{L_{13}}L_{23}\overline{L_{24}} -
L_{13}\overline{L_{14}}L_{24}\overline{L_{23}}.
\end{align*}
Notice that $c_1,c_2 \in \R$. 
We now claim that the eigenvalues have the form $-a + i \mu$, $\mu \in \R$. 
This is true if and only if $y=b^2-\mu^2$ solves
\begin{align*}
y^2 - y c_1 + c_2 = 0,
\end{align*}
which in terms of $\mu$ becomes
\begin{align*}
\mu^4 + \mu^2 (c_1 - 2b^2) + b^4 - b^2 c_1 + c_2  = 0.
\end{align*}
From this, we deduce that if it holds
\begin{equation}\label{eq:conditionmu}
2b^2 - c_1 \pm \sqrt{(2b^2 - c_1)^2 - 4(b^4 - b^2 c_1 + c_2)} > 0,
\end{equation}
there are 4 distinct solutions $\mu$, producing 4 distinct roots of $p$,
and thus all and only the eigenvalues.
\emph{Only at this point of the proof} we restrict ourselves to the particular
choice of parameters made at the end of the proof of \autoref{lem:existselfsimilar}.
Indeed, with such choice, \eqref{eq:conditionmu} is satisfied independently from the sign of $\xi$:
\begin{align*}
2b^2 - c_1 \sim \xi^2 \times 3.4463 \times 10^{-4}, \quad
b^4 - b^2 c_1 + c_2  \sim \xi^4 \times 2.7035 \times 10^{-9},
\end{align*}
and this concludes the proof.

\end{document}